\documentclass[acmtog,authorversion,nonacm,table]{acmart}

\usepackage{booktabs} 

\setcitestyle{nosort,square} 

\usepackage{picinpar,moresize,xfrac,graphpap,dcolumn,wrapfig,graphicx}

\microtypesetup{stretch=30,shrink=30}
\DeclareGraphicsExtensions{.png,.pdf,.jpg}
\usepackage{subcaption}
\usepackage{stackengine}

\usepackage{tikz,pgfplots}
\usepackage{tikz-cd}
\usepackage{pgf}
\usepgfplotslibrary{colormaps}
\usetikzlibrary{pgfplots.colormaps}
\usetikzlibrary{arrows,automata,positioning,backgrounds}
\usepackage{rotating}
\pgfplotsset{compat=newest}
\pgfplotsset{plot coordinates/math parser=false}
\newlength\figureheight
\newlength\figurewidth

\usepackage{soul,array,calc,url,ragged2e,graphpap}
\usepackage{booktabs} 
\usepackage{tabularx}
\usepackage{colortbl}
\usepackage{multirow}
\usepackage{hyphenat}
\usepackage{transparent}
\usepackage{enumerate}
\usepackage{mathrsfs}
\usepackage{mdframed}
\usepackage{acro}

\usepackage[figure]{hypcap}
\usepackage{mathtools}
\mathtoolsset{centercolon}
\usepackage{nicefrac}
\usepackage{units}
\usepackage[normalem]{ulem}
\usepackage{cancel}
\usepackage{pbox}
\usepackage{multicol}
\usepackage{cases}
\usepackage[all]{xy}

\usepackage{adjustbox}
\usepackage{wrapfig}
\AfterEndEnvironment{wrapfigure}{\setlength{\intextsep}{0mm}}

\usepackage{cleveref}
\crefmultiformat{subequation}%
{\edef\crefstripprefixinfo{#1}(#2#1#3}%
{,#2\crefstripprefix{\crefstripprefixinfo}{#1}#3)}%
{,#2\crefstripprefix{\crefstripprefixinfo}{#1}#3}%
{,#2\crefstripprefix{\crefstripprefixinfo}{#1}#3)}

\makeatletter
\DeclareFontFamily{OMX}{MnSymbolE}{}
\DeclareSymbolFont{MnLargeSymbols}{OMX}{MnSymbolE}{m}{n}
\SetSymbolFont{MnLargeSymbols}{bold}{OMX}{MnSymbolE}{b}{n}
\DeclareFontShape{OMX}{MnSymbolE}{m}{n}{
    <-6>  MnSymbolE5
   <6-7>  MnSymbolE6
   <7-8>  MnSymbolE7
   <8-9>  MnSymbolE8
   <9-10> MnSymbolE9
  <10-12> MnSymbolE10
  <12->   MnSymbolE12
}{}
\DeclareFontShape{OMX}{MnSymbolE}{b}{n}{
    <-6>  MnSymbolE-Bold5
   <6-7>  MnSymbolE-Bold6
   <7-8>  MnSymbolE-Bold7
   <8-9>  MnSymbolE-Bold8
   <9-10> MnSymbolE-Bold9
  <10-12> MnSymbolE-Bold10
  <12->   MnSymbolE-Bold12
}{}
\let\llangle\@undefined
\let\rrangle\@undefined
\DeclareMathDelimiter{\llangle}{\mathopen}%
                     {MnLargeSymbols}{'164}{MnLargeSymbols}{'164}
\DeclareMathDelimiter{\rrangle}{\mathclose}%
                     {MnLargeSymbols}{'171}{MnLargeSymbols}{'171}
\makeatother

\usepackage{fancyvrb,listings}
\usepackage{algorithm}
\usepackage{algorithmicx}
\usepackage[noend]{algpseudocode}

\algrenewcommand\alglinenumber[1]{\footnotesize #1:}
\makeatletter  
 \renewcommand{\ALG@name}{\small Algorithm} 
\makeatother 
\algdef{SE}[DOWHILE]{Do}{doWhile}{\algorithmicdo}[1]{\algorithmicwhile\ #1}%

\newtheoremstyle{mine}{3pt}{3pt}{\itshape}{}{\bfseries}{.}{.5em}{}
\theoremstyle{mine}
\newtheorem{theorem}{Theorem}[section]

\newtheorem{lemma}[theorem]{Lemma}
\newtheorem{proposition}[theorem]{Proposition}
\newtheorem{corollary}[theorem]{Corollary}
\newtheorem{definition}[theorem]{Definition}
\newtheorem{example}[theorem]{Example}

\newtheorem{remark}[theorem]{Remark}

\newcommand{\figref}[1]{\textup{Fig.~\ref{#1}}}
\newcommand{\tabref}[1]{\textup{Table~\ref{#1}}}
\newcommand{\teqref}[1]{\textup{Eq.~(\ref{#1})}}

\newcommand{\secref}[1]{\textup{Section~\ref{#1}}}
\newcommand{\appref}[1]{\textup{Appendix~\ref{#1}}}
\renewcommand{\algref}[1]{\textup{Alg.~\ref{#1}}}
\newcommand{\thmref}[1]{\textup{Theorem~\ref{#1}}}
\newcommand{\lemref}[1]{\textup{Lemma~\ref{#1}}}

\newcommand{\corref}[1]{\textup{Corollary~\ref{#1}}}

\def\cf{\emph{cf.}}
\def\ie{\emph{i.e.}}

\def\NN{\mathbb{N}}

\def\RR{\mathbb{R}}

\def\ZZ{\mathbb{Z}}

\def\cC{\mathcal{C}}

\def\cI{\mathcal{I}}

\def\cM{\mathcal{M}}

\def\cR{\mathcal{R}}

\def\sfA{\mathsf{A}}

\def\sfE{\mathsf{E}}

\def\sfH{\mathsf{H}}

\def\sfJ{\mathsf{J}}

\def\sfL{\mathsf{L}}
\def\sfM{\mathsf{M}}

\def\sfV{\mathsf{V}}

\DeclareMathAlphabet{\mathpzc}{OT1}{pzc}{m}{it}
\def\pzcA{\mathpzc{A}}
\def\pzcB{\mathpzc{B}}

\def\pzcL{\mathpzc{L}}

\def\pzcV{\mathpzc{V}}

\def\fg{\mathfrak{g}}
\def\fz{\mathfrak{z}}

\usepackage{bbm}
\DeclareSymbolFont{bbold}{U}{bbold}{m}{n}
\DeclareSymbolFontAlphabet{\mathbbold}{bbold}

\def\det{\operatorname{det}}

\def\im {\operatorname{im}}
\def\ker{\operatorname{ker}}

\def\id{\operatorname{id}}
\newcommand{\Id}{\mathrm{Id}}

\DeclareMathOperator*{\argmin}{argmin}

\DeclareMathOperator{\SO}{SO}
\DeclareMathOperator{\SE}{SE}

\newcommand{\se}{\mathfrak{se}} 

\DeclarePairedDelimiterX\braket[2]{\langle}{\rangle}{#1\,\delimsize\vert\,\mathopen{}#2}

\definecolor{b1}{rgb}{0.158099,0.313781,0.636957}
\definecolor{b2}{rgb}{0.525367,0.691857,0.998936}
\definecolor{g1}{rgb}{0.256000,0.640000,0.576000}
\definecolor{g2}{rgb}{0.559573,0.800781,0.760580}
\definecolor{p1}{rgb}{0.416000,0.192000,0.640000}
\definecolor{p2}{rgb}{0.680880,0.561880,0.799881}
\definecolor{r1}{rgb}{0.800000,0.200000,0.000000}
\definecolor{r2}{rgb}{1.000000,0.702595,0.603461}
\definecolor{k1}{gray}{0}
\definecolor{k2}{gray}{0.7}

\def\OmegaMW{\Omega^{\scriptscriptstyle MW}}
\def\ThetaMW{\Theta^{\scriptscriptstyle MW}}
\def\OmegaMWD{\Omega^{\scriptscriptstyle\sf MW}}

\setcopyright{rightsretained}
\acmJournal{TOG}
\acmYear{2026} \acmVolume{0} \acmNumber{0} \acmArticle{0} \acmMonth{0}\acmDOI{0}
\makeatletter
\let\@authorsaddresses\@empty
\makeatother
\begin{document}
\title{Elastic Curves via Geometric Mechanics}

\author{Oliver Gross}
\authornote{These authors contributed equally.}
\author{Rohit Jammula}
\authornotemark[1]
\author{Albert Chern}
 \affiliation{%
   \institution{University of California San Diego}
   \streetaddress{9500 Gilman Dr, MC 0404}
   \city{La Jolla}
   \state{CA}
   \postcode{92093}
   \country{USA}
 }
\email{rjammula@ucsd.edu, ogross@ucsd.edu, alchern@ucsd.edu}

\newcommand{\AC}[1]{{\footnotesize\color[rgb]{0.2,0.4,0.8}\textbf{AC:} #1}}
\newcommand{\RJ}[1]{{\footnotesize\color[RGB]{17,0,255}\textbf{RJ} #1}}
\newcommand{\OG}[1]{{\footnotesize\color[RGB]{219, 48, 122}\textbf{OG:} #1}}

\begin{abstract}
Elastic curves are the mathematical shapes of thin elastic rods in equilibrium, with deep connections to mechanics, geometry, and computer graphics. Traditionally described as stationary points of bending energy under length and torsion constraints, their rich theory admits many equivalent characterizations. We develop a new one from the viewpoint of geometric mechanics.
Our main contribution relies on a lesser-known \emph{isoperimetric characterization}: a curve is elastic if and only if it is a critical point of the length functional under fixed area and volume vectors. We show that these constraints transform naturally under orientation-preserving rigid body motions, identifying them as momentum variables for these symmetries.
This structure suggests a new discrete theory. We show that the low-order integral quantities length, area, and volume vectors are all naturally defined for polygonal curves, leaving the same transformation laws  exactly satisfied. The resulting definition of discrete elastic curves in terms of the isoperimetric characterization restricted to discrete polygonal curves is variational, structure-preserving, and requires no auxiliary discretizations of curvature or material frames.
Finally, the same structure carries the \emph{Marsden--Weinstein} form, a canonical (pre-)symplectic structure on the space of curves, to polygonal curves. This yields novel approaches to Hamiltonian dynamics on discrete space curves, including tangent, vortex-filament, and modified Korteweg--de Vries flows.
\end{abstract}

\keywords{
elastic curves,
discrete elastica,
discrete differential geometry,
geometric mechanics,
momentum maps,
structure-preserving discretization,
Hamiltonian curve flows
}

\begin{teaserfigure}
    \includegraphics[width=\textwidth]{figures/Teaser.jpg}
    \caption{We compute elastic curves from an \emph{isoperimetric characterization} of smooth elastica that carries over to polygonal curves without requiring auxiliary discretizations of curvature or material frames. The resulting lower-order variational problem can be optimized directly over polygon vertices, is better conditioned, and produces consistent curves across resolutions, providing a canonical notion of \emph{discrete elastica} that retains connections to dynamical systems of the smooth theory.
    }
    \label{fig:Teaser}
\end{teaserfigure}

\maketitle

\section{Introduction}
\emph{Elastic curves}, also known as \emph{elastica}, are the shapes taken by thin elastic wires in static equilibrium.  The study of elastic curves dates back to the early days of the calculus of variations, beginning with the classical work of Bernoulli and Euler~\cite{Euler:1744:MIL, Levien:2008:EMH}.  They continue to play central roles in modeling thin elastic materials, such as in plant growth, DNA mechanics, hair simulation, elastic and magnetic knots, deployable structures and fabrication-aware design~\cite{Goriely:1998:SHR,Benham:2005:DNA,
Bertails:2006:SPH,Ricca:2018:GES,Hafner:2021:DSP,Hafner:2023:DSK,
Panetta:2019:XSH,Vidulis:2023:CEM,Dandy:2024:TCE}.

While they are traditionally characterized as stationary points of the bending energy 
\[
\tfrac{1}{2}\int \kappa^2\,ds,
\]
where \(\kappa\) is the curvature, and \(ds\) is the arc-length measure of the
curve, possibly under constraints such as fixed length and total torsion, there
are unusually many equivalent characterizations of the same curves. Kirchhoff's
analogy interprets elastic space curves as spinning-top dynamics of their
tangent indicatrices~\cite{Kirchhoff:1859:UGB}, which can be expressed as a
pendulum equation~\cite{Pinkall:2024:DGF}. They also arise as solitons of the
localized induction equation, an integrable curve flow related by the Hasimoto
transform to the nonlinear Schr\"odinger equation~\cite{Hasimoto:1971:MVF,
Hasimoto:1972:SVF,Langer:1984:TSC,LangerSinger:1996:LAK,
Singer:2008:LEC,Pinkall:2024:DGF}. Elastic curves also provide a classical bridge to surface theory, where cylinders over free elastica and their M\"obius images give Willmore surfaces, with related constructions yielding constrained Willmore surfaces~\cite{Pinkall:1985:HTS,Bohle:2008:CWS,Heller2014SpaceForms,Pinkall:2024:DGF}. Notably, a number of these results
generalize to elastic curves in arbitrary dimensions~\cite{Tjaden:1991:EEK, Pinkall:2024:DGF}, space forms with curvature~\cite{Langer:1984:TSC}, or non-constant bending stiffness, as in rods of
variable thickness~\cite{Hafner:2021:DSP,Hafner:2023:DSK,Gross:2025:ECV}.

While these perspectives coincide in the continuous theory, this coincidence is no longer automatic when curves are replaced by polygons, which have well-defined edge lengths and turning angles, but no canonical curvature. From the viewpoint of \emph{Discrete Differential Geometry (DDG)}, one defines discrete quantities and identities that remain meaningful and can be evaluated on polygonal curves~\cite{Bobenko:2008:DDG,Wardetzky:2007:DQC}. For elastic polygons, existing discretizations do so by choosing one of the equivalent continuous characterizations as the basis for the definition.

This choice leads to two broad DDG approaches to elastica. The first takes the local variational characterization as its starting point, defining discrete curvature, frame, and strain quantities directly on polygons. A particularly influential model was proposed by \citet{Bergou:2008:DER}: discrete elastic rods discretize the local framed-rod picture through adapted frames, parallel transport, holonomy, bending, and torsion, thereby defining a discrete elastic energy for bendable rods. Related framed-rod and Cosserat models extend this viewpoint to stretch, shear, dynamics, contact, and strand simulation~\cite{Pai:2002:SIS,Kugelstadt:2016:POB,Gazzola:2018:FIP,Bertails:2006:SPH,DerouetJourdan:2010:SID,Casati:2013:SSC,DerouetJourdan:2013:IDH,Shi:2023:LCM,Daviet:2023:IHS,Hsu:2025:SCR}, with applications in hair simulation, inverse design, and fabrication~\cite{Panetta:2019:XSH,Pillwein:2020:EGG,Pillwein:2021:GDE,Ren:2022:UME,Becker:2023:CSH,Vidulis:2023:CEM,Dandy:2024:TCE,Hafner:2021:DSP,Hafner:2023:DSK}.

\begin{figure}[h]
    \centering
    \includegraphics[width = \columnwidth]{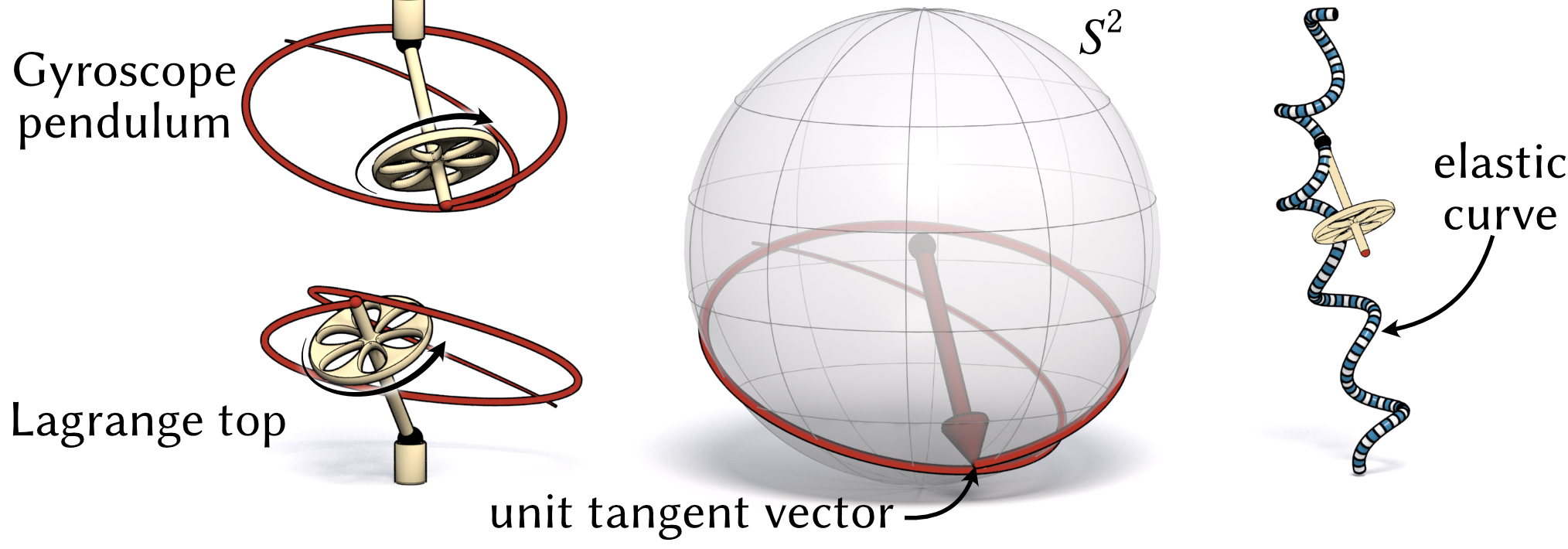}
    \caption{\emph{Kirchhoff's analogy} identifies the tangent indicatrix of an elastic space curve with the motion of a Lagrange top (equivalently, a symmetric gyroscope under gravity), whose reduced dynamics are governed by a pendulum equation.}
    \label{fig:ElasticFromPendulum}
\end{figure}
A different line preserves the integrable-system characterizations instead. The discrete integrable geometry of curves and lattices was developed in this direction by~\citet{Doliwa:1999:GDC} and provides semi-discrete analogues of curve flows related to the nonlinear Schr\"odinger and localized-induction hierarchies. Building on this viewpoint, \citet{Bobenko:2000:DTL} introduced an integrable variational discretization of elastic curves through a discrete Lagrange-top formulation (\figref{fig:ElasticFromPendulum}). \citet{Pinkall:2007:DSR} later constructed a doubly discrete analogue of the smoke-ring flow. Hoffmann and collaborators developed related constructions through discrete Hasimoto flows, B\"acklund transformations, and skew parallelogram nets, characterizing discrete elastic and constrained elastic curves as invariant curves of integrable discrete curve evolutions~\cite{Hoffmann:2008:DHS,Hoffmann:2025:SPN, Hoffmann:2026:DCT}. 
These constructions preserve the integrable-system character of elastica, but define discrete elastica only implicitly through the particular equations or flows they discretize. The resulting notion is therefore inherited from a chosen model of the smooth dynamics rather than posed intrinsically on polygon space, and it does not yield a geometric problem that can be evaluated and optimized directly on polygon vertices.
 
In this paper, we propose a new formulation for discrete elastic curves that takes a third route. Rather than improving a local curvature, frame, or strain model, and rather than discretizing a particular integrable system, we start from a lesser-known and only recently discovered variational characterization of smooth elastica~\cite{Chern:2020:CHF}: 
\begin{quote}
	\emph{Elastic curves are critical points of length under constraints on their \emph{area} and \emph{volume vectors}.}
\end{quote}
Here, components of area and volume vectors are respectively the projected areas along coordinate axes and the swept volumes rotating about coordinate axes.

Smooth elastica therefore admit an isoperimetric characterization that lowers the order of the variational problem. Whereas bending energy depends on second derivatives and leads to a fourth-order Euler--Lagrange equation, the isoperimetric formulation uses only the first-order quantities length, area, and volume and yields second-order stationary conditions. Unlike bending energy, these quantities restrict canonically to polygons without choosing discrete curvature, frames, or strains. This gives a new notion of discrete elastica with better conditioning and consistent behavior under refinement that retains the geometric structure of the smooth theory.

We show that the area and volume vectors have a kinematic meaning in geometric mechanics: for closed curves, they are the \emph{linear} and \emph{angular momenta} associated with translational and rotational symmetries~\cite{Marsden:1983:COV,Shashikanth:2003:LVR}. 
Building on this viewpoint, we place the isoperimetric characterization of elastica on a systematic footing, and generalize it to quasi-periodic curves, whose geometry repeats up to a prescribed rigid motion, called the \emph{monodromy}. For each monodromy, we identify the compatible area-volume data as the \emph{Noether charges} associated with the residual rigid-motion symmetries. The same first-principles derivation carries over to polygonal curves and yields their discrete Noether charges. Building on this general discrete isoperimetric theory, we develop a numerical method for computing discrete elastica.

As a further consequence, our geometric-mechanics formulation
provides a \emph{pre-symplectic structure} for discrete polygonal curves. In the continuous setting, this structure is given by a closed \(2\)-form known as the \emph{Marsden--Weinstein form}. Its kernel consists of reparametrizations, and it therefore descends to a genuine symplectic form after quotienting by reparametrizations. The Marsden--Weinstein form underlies various Hamiltonian flows, including the aforementioned integrable dynamical system of \emph{vortex-filament} motion and the higher-order \emph{modified Korteweg--de Vries (mKdV)} flows in the same hierarchy, while reparametrizations appear as its degenerate directions. 

With the new discrete pre-symplectic structure arising from our discrete theory, we obtain structure-driven discrete analogues of tangent, vortex-filament, and mKdV flows that retain the relevant conservation laws up to a small residual, as demonstrated through comparisons with local and integrable discretizations.

A key benefit of this route is that it does not force us to choose one
classical characterization of elastica at the expense of the others. The
isoperimetric formulation gives the variational definition, the momentum
interpretation keeps the rigid-body-mechanics meaning, and the
Marsden--Weinstein form retains the connection to Hamiltonian curve
flows and the localized-induction hierarchy.

Remaining closely tied to classical results, this new discrete theory opens up numerous opportunities by enabling unexplored methods for analysis and simulation of related concepts. Specifically, we make the following contributions:
\begin{itemize}
    \item We give a systematic geometric-mechanics treatment of quasi-periodic elastica with monodromy \(g\in\SE(3)\), identify the compatible rigid-motion symmetries, and show that the corresponding area-volume components are their Noether momenta.

    \item We provide a structure-preserving discretization for
    quasi-periodic polygonal curves: discrete area and volume, together with
    the Marsden--Weinstein form restricted to polygonal curves, satisfy the
    same transformation laws exactly.

    \item We define discrete elastic curves as critical points of
    length at fixed momentum on the space of discrete polygonal curves, yielding a lower-order, curvature-free variational characterization.

    \item We use the discrete Marsden--Weinstein form to define Hamiltonian flows on polygonal curves, yielding structure-driven analogues of tangent, vortex-filament, and mKdV flows.
\end{itemize}

\paragraph{Outline} The remainder of the paper is organized as follows: \secref{sec:SpaceCurves} reviews the classical variational theory of elastic curves together with equivalent characterizations, including the isoperimetric formulation that motivates our approach. In \secref{sec:GeomMech}, we develop the
geometric-mechanics framework and identify the area and volume data as the momentum map comprising the Noether charges associated with rigid-motion symmetries. In
\secref{sec:Discretization}, we transfer these structures to polygonal curves
and introduce our main discrete formulation, defining discrete elastica as
critical points of length at fixed discrete momentum. In \secref{sec:NumericalOptimization}, we present a simple numerical method to compute discrete elastica with prescribed monodromy and momenta. In
\secref{sec:HamiltonianFlowsOnDiscreteSpaceCurves}, we use the discrete Marsden--Weinstein
structure to construct tangent, vortex-filament, and modified
Korteweg--de Vries flows. Finally, in \secref{sec:ResultsValidation}, we evaluate the discrete formulation numerically and validate its convergence, conservation properties, and agreement with the continuous theory.

\section{Elastic Curves}
\label{sec:SpaceCurves}
Let $\cM$ denote the space of smooth curves $\gamma\colon I \to \RR^3$,
where $I$ is either a compact interval $[0,1]$, the circle $S^1$, or
the real line $\RR$. A smooth \emph{infinitesimal deformation}, or \emph{variation}
(\emph{virtual velocity}), of \(\gamma\) is a vector field \(\mathring\gamma\)
along \(\gamma\). The \emph{tangent space}\footnote{We distinguish between \(\gamma\) as an abstract element of the
function space \(\cM\) and \(\vec\gamma\colon I\to\RR^3\) as the
corresponding vector-valued function on \(I\). Thus derived quantities,
such as \(\vec\gamma'\), \(\vec\gamma''\), and higher derivatives, and
variations, such as \(\dot{\vec\gamma}\) and \(\mathring{\vec\gamma}\),
are also functions on \(I\). For example, \(\dot{\vec\gamma}\colon
I\to\RR^3\) is evaluated at \(t\in I\) as
\(\dot{\vec\gamma}(t)\in\RR^3\).}
\begin{equation*}
    T_\gamma\cM = \{\mathring{\vec\gamma}\colon I\to\RR^3\}
\end{equation*}
consists of all such fields, \ie, smooth maps \(\mathring{\vec\gamma}\colon I\to\RR^3\).
A curve $\gamma\in\cM$ is \emph{arc-length parametrized} if
$|\vec\gamma'|=1$, so that the parameter $t\in I$ measures distance
along $\gamma$ directly. We work throughout with arc-length
parametrized curves and write $(\cdot)'= \frac{d}{ds}$ for the arc-length
derivative.

\subsection{Geometric functionals on \(\cM\)}
\label{sec:GeometricFunctionals} 
To set up the variational definition of elastic curves, we first recall a few basic geometric functionals on \(\cM\). These are quantities that assign a number to each curve, independently of its parametrization and of the chosen
Euclidean frame of \(\RR^3\). Such functionals often appear as terms in elastic potential energy models for materials whose centerline is described by a curve. 

\subsubsection{Length and Bending Energy}
The simplest geometric quantity associated to a curve $\gamma$ is its
\emph{length}
\begin{equation}
    \label{eq:LengthEnergy}
    \pzcL(\gamma) = \int_I ds.
\end{equation}
The next is curvature: the unit tangent $T = \vec\gamma'$ traces a
path (the \emph{tangent indicatrix}) on the unit sphere $S^2\subset\RR^3$, and its rate of change $\vec\gamma'' = T'$ measures
how sharply the curve bends. The {\emph{bending energy}\hfill\text{}}
\begin{wrapfigure}[7]{r}{0.4\columnwidth}
    \centering
    \vspace{-1.4\baselineskip}
    {\hspace{-2.em}\includegraphics[width=0.45\columnwidth]{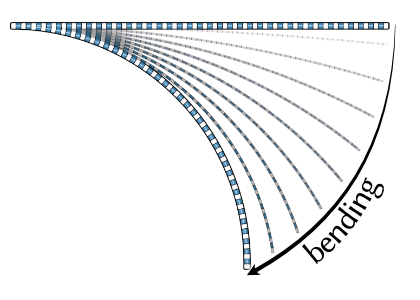}}
\end{wrapfigure}
\begin{equation}
    \label{eq:BendingEnergy}
\displaywidth=\parshapelength\numexpr\prevgraf+2\relax
    \pzcB(\gamma) = \tfrac{1}{2}\int_I |\vec\gamma''|^2\,ds
\end{equation}
integrates its squared norm over the curve and models the elastic energy stored in
a bent inextensible rod~\cite{Antman:2005:NPE,Audoly:2010:EAG}.

We are interested in stationary points of \(\teqref{eq:BendingEnergy}\).
Without further constraints these are free elastica. If length is imposed as
a constraint, the Euler--Lagrange equation acquires a Lagrange multiplier
term corresponding to tension~\cite{Pinkall:2024:DGF}.

\subsubsection{Holonomy and total torsion}
Both length and bending energy are \emph{geometric invariants} of the curve, \ie, they are unchanged under reparametrization and under changes of frame of \(\RR^3\). For space curves, there is another geometric invariant measuring ``twisting.'' As one moves along the curve, normal directions can be parallel transported, and comparing the initial and final normal directions gives a net rotation angle. This angle, the \emph{holonomy of the normal bundle}, equivalently \emph{total torsion} modulo \(2\pi\), provides the torsional constraint in the classical theory of space elastica~\cite{Bishop:1975:MFC}.

\begin{wrapfigure}[9]{r}{0.35\columnwidth}
  {\hspace{-2.em}
    \includegraphics[width=0.4\columnwidth]{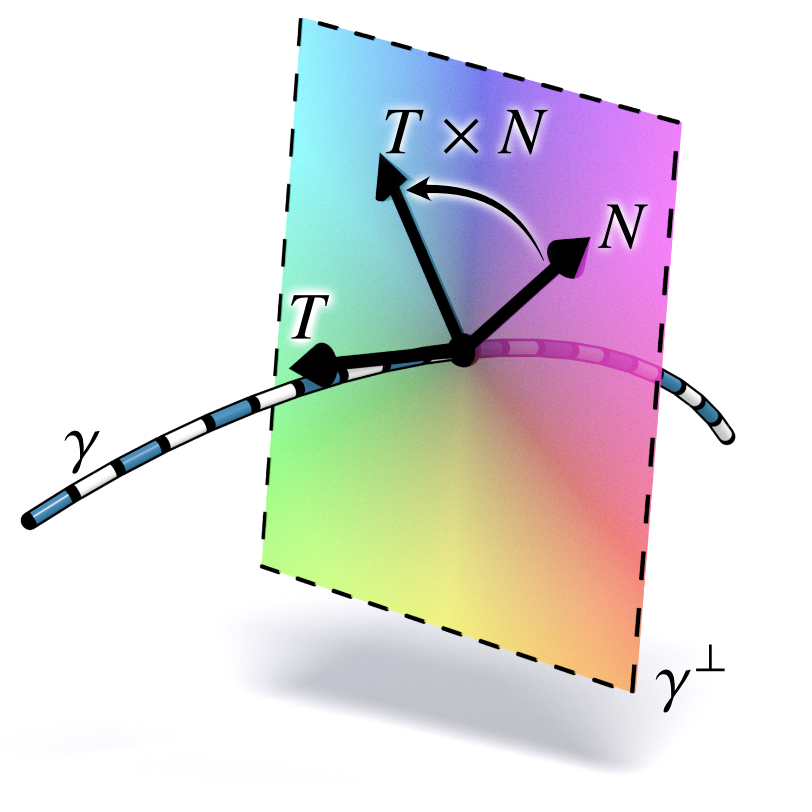}
  }  
\end{wrapfigure}
On each normal plane, the map \(T\times\cdot\) acts as rotation by
\(90^\circ\). Thus it plays the role of multiplication by \(i\), and defines a \emph{complex structure} on the normal bundle. This complex structure is preserved by parallel transport~\cite[Sec.~3.2]{Chern:2020:CHF}. 

Given a pair of reference unit normal vectors \(W=(W_0,W_1)\) at \(\vec\gamma(0)\) and \(\vec\gamma(1)\) the \emph{holonomy} is the unique angle
\(\Phi_W\in\mathbb{R}/2\pi\mathbb{Z}\) such that
\begin{equation}
    \label{eq:HolonomyDefiningEquation}
    Z_1 = e^{-i\Phi_W}W_1 = \cos(\Phi_W)\,W_1 - \sin(\Phi_W)\,T_1\times W_1,
\end{equation}
where \(Z_1\) is the parallel transport of \(W_0\) along \(\gamma\). In the closed case \(I=S^1\) one takes \(W_0=W_1\). 
For open curves, \(\Phi_W\) depends on the choice of endpoint normals
\(W_0\) and \(W_1\). Changing these normals changes only the reference angle
between the initial and final normal planes. The variational gradient of
\(\Phi_W\) is independent of this choice~\cite[Thm.~5.3]{Pinkall:2024:DGF}.

\begin{remark}
Much of the literature uses \emph{total torsion}
\begin{equation*}
    \mathpzc{T}(\gamma,N) = \int_I \langle N', T\times N\rangle\, ds
\end{equation*}
of a framed curve \((\gamma, N)\) in place of holonomy. While their variational gradients coincide~\cite{Pinkall:2024:DGF}, the total torsion is intrinsic to the curve only modulo \(2\pi\), which is precisely what the holonomy \(\Phi_W\in\mathbb{R}/2\pi\mathbb{Z}\) captures.
\end{remark}

\subsection{Characterizations of elastic curves}
\label{sec:ElasticCurves}
We are now in a position to state the central definition of this section. A curve
\(\gamma \in \cM\) is called \emph{elastic} if it is a critical point of the bending
energy \(\pzcB(\gamma)\) (\teqref{eq:BendingEnergy}) under the constraints of fixed
length \(\pzcL_0\) (\teqref{eq:LengthEnergy}) and holonomy \(\Phi_0\)
(\teqref{eq:HolonomyDefiningEquation}). The \(L^2\)-gradients of the bending energy, length, and holonomy of an arc-length parametrized curve $\gamma\in\mathcal{M}$ under perturbations with compact support in the interior of \(I\) are 
\begin{align*}
\nabla\pzcB
&=
\vec{\gamma}''''+\tfrac{3}{2}|\vec{\gamma}''|^2\vec{\gamma}''
-\langle\vec{\gamma}'''',\vec{\gamma}'\rangle\vec{\gamma}',
\\
\nabla\pzcL&=-\vec{\gamma}'',
\\
\nabla\Phi_W&=-\vec{\gamma}'\times\vec{\gamma}'''.
\end{align*}
Consequently, stationarity of \(\pzcB+\lambda_1\pzcL+\lambda_2\Phi_W\) yields the Euler--Lagrange equation
\begin{equation}
    \label{eq:ElasticELEqClassic}
    \vec \gamma'''' + \tfrac{3}{2}\langle\vec \gamma'',\vec \gamma''\rangle\vec \gamma''
    - \langle\vec \gamma'''',\vec \gamma'\rangle\vec \gamma'
    - \lambda_1\vec \gamma''
    - \lambda_2\,\vec \gamma'\times\vec \gamma'''
    = 0,
\end{equation}
for Lagrange multipliers \(\lambda_1,\lambda_2\in\mathbb{R}\).

\subsubsection{Equivalent characterizations and physical interpretations}
\label{sec:EquivalentCharacterizations}
Elastic curves admit several equivalent
characterizations~\cite{Pinkall:2024:DGF} connecting them to mathematical physics and integrable systems, and, as we show in \secref{sec:MWForm}, to symplectic geometry.

Classical examples include \emph{Kirchhoff's analogy} (\figref{fig:ElasticFromPendulum}), which identifies elastic space curves with the motion of a spinning top~\cite{Kirchhoff:1859:UGB, Bobenko:2000:DTL}. In this description, the tangent indicatrix carries the reduced dynamics and satisfies a pendulum equation.

Another classical characterization is closely tied to vortex-filament dynamics.
The elastic Euler--Lagrange~\teqref{eq:ElasticELEqClassic} can be written as
\begin{equation*}
    \tfrac{d}{ds}
    (
        \vec\gamma'''
        +
        \tfrac32|\vec\gamma''|^2\vec\gamma'
        -
        \lambda_1\vec\gamma'
        -
        \lambda_2\vec\gamma'\times\vec\gamma''
    )
    =
    0 .
\end{equation*}
Hence there is a constant vector \(\vec c_1\in\RR^3\) such that
\begin{equation*}
    \vec\gamma'''
    +
    \tfrac32|\vec\gamma''|^2\vec\gamma'
    -
    \lambda_1\vec\gamma'
    -
    \lambda_2\vec\gamma'\times\vec\gamma''
    =
    -\vec c_1 .
\end{equation*}
Crossing this equation with \(\vec\gamma'\) from the left gives
\begin{equation*}
    \tfrac{d}{ds}
    (
        \vec\gamma'\times\vec\gamma''
        +
        \lambda_2\vec\gamma'
    )
    =
    \vec c_1\times\vec\gamma' .
\end{equation*}
Integrating once more shows that, for some constant vector
\(\vec c_2\in\RR^3\) and some \(\lambda_3\in\RR\),
\begin{equation}
    \label{eq:SRFtoElasticRelation}
    \vec\gamma'\times\vec\gamma''
    =
    \vec c_1\times\vec\gamma+\vec c_2+\lambda_3\,\vec\gamma' .
\end{equation}
Conversely, differentiating this relation and reversing the two steps recovers the elastic Euler--Lagrange equation, so that the two characterizations~\teqref{eq:ElasticELEqClassic} and~\teqref{eq:SRFtoElasticRelation} are indeed equivalent. The left-hand side is the velocity field of the \emph{localized induction equation}
\begin{equation}
    \label{eq:LocalizedInductionEquation}
    \tfrac{\partial}{\partial t}\vec \gamma
    = \vec \gamma'\times\vec \gamma'',
\end{equation}
which governs the evolution of thin vortex filaments and is also known as the
\emph{vortex-filament flow}\footnote{This flow is also known as smoke-ring
flow, Hasimoto flow, Heisenberg flow, or binormal curvature flow.}. 
This flow belongs to a family of mutually commuting curve
evolutions---the \emph{localized-induction hierarchy}---meaning that applying two flows from the family in either order gives the same result. Under the \emph{Hasimoto transform}, this family corresponds to the \emph{nonlinear Schr\"odinger (NLS) hierarchy}. This integrable structure supplies many related curve flows with shared conservation laws~\cite{Chern:2020:CHF}.

Thus, \(\teqref{eq:SRFtoElasticRelation}\) says
that elastic curves are precisely those curves for which the localized
induction flow reduces to an infinitesimal rigid motion (\figref{fig:RigidMotionToElastica}), up to the tangential
repara\-metrization term. 
This viewpoint goes back to~\citet{DaRios:1906:SMD} and~\citet{LeviCivita:1908:SAE}, was rediscovered
repeatedly~\cite{Ricca:1991:RDR}, and was later related by~\citet{Hasimoto:1971:MVF} to the nonlinear Schrödinger equation. It has also
motivated discrete theories of elastic curves based on discrete analogues of
the vortex-filament flow~\cite{Pinkall:2007:DSR, Hoffmann:2008:DHS}. In these
theories, discrete elastica are polygons that evolve by rigid body motions
under the chosen discrete flow. Since the continuous velocity is the
curvature-binormal vector \(\vec\gamma'\times\vec\gamma''\), this approach
requires a choice of how curvature, normal directions, and binormal motion
are represented on a polygon. Our isoperimetric formulation avoids this
choice by using only length, area, and volume.
The doubly discrete
Hasimoto flow, for example, provides a discrete analogue of the smoke-ring
flow, although its variational interpretation is not yet understood.

\begin{figure}[t]
    \centering
    \includegraphics[width=\columnwidth]{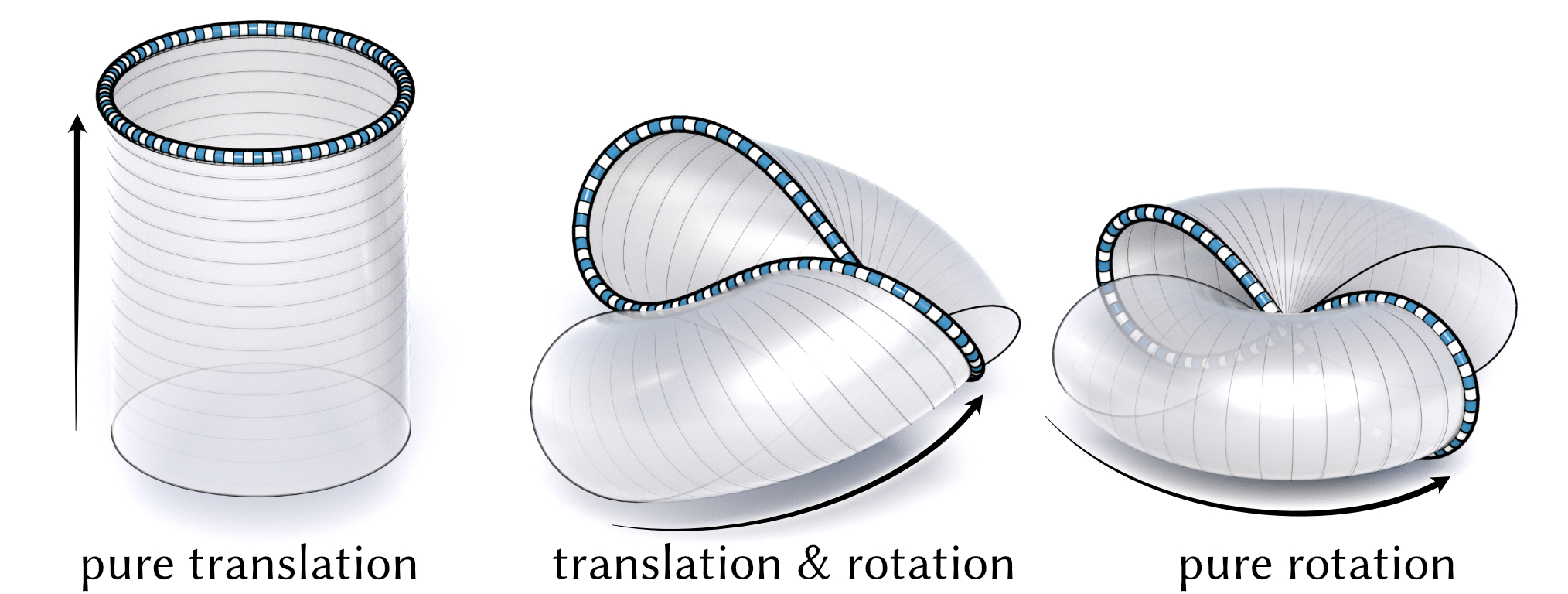}
    \caption{Under the localized-induction equation, an elastic curve evolves by an orientation-preserving rigid body motion, up to tangential reparametrization. This characterizes elastica as invariant curves of the flow.}
    \label{fig:RigidMotionToElastica}
\end{figure}

In contrast to these classical correspondences with spinning tops, pendulums,
vortex filaments, and integrable flows, the present work builds on a more
recent variational characterization of elastica. First noted by~\citet{Chern:2020:CHF} as a consequence of the recursion formula for the
localized-induction hierarchy, this characterization identifies elastica as critical points of the length functional under area and volume constraints. 
It is well suited to our purposes, since  
length, area and volume are all canonically defined directly for polygonal curves. In contrast to previous approaches, this ``isoperimetric characterization'' (see~\secref{sec:IsoperimetricCharacterization}) does not require choosing one of the many discrete analogues of curvature, torsion, or integrable system for polygons.

\subsubsection{Isoperimetric characterization}
\label{sec:IsoperimetricCharacterization}
The classical \emph{isoperimetric problem} asks which closed planar curve encloses the largest area at fixed length. Equivalently, one may ask which curve has the shortest length at fixed enclosed area. In both forms, the answer is the circle. We now pose an analogous problem for space curves.

For closed space curves \(\gamma\colon S^1\to \RR^3\), define the 
\emph{area vector} (\figref{fig:ProjectedAreaClosed})
\begin{equation*}
    \pzcA(\gamma) = \tfrac{1}{2}\oint \vec\gamma\times\vec\gamma'\,ds \in \mathbb{R}^3
\end{equation*}
and the \emph{volume vector} (\figref{fig:RevolutionVolume})
\begin{equation*}
    \pzcV(\gamma) = \tfrac{1}{3}\oint \vec\gamma\times(\vec\gamma\times\vec\gamma')\,ds \in \mathbb{R}^3.
\end{equation*}
The component \(\langle\vec a,\pzcA(\gamma)\rangle\) of the area vector along a unit vector \(\vec a\in S^2\subset\RR^3\) equals the signed area of the projection of \(\gamma\) onto \(\vec a^\perp\). Analogously, the component \(\langle\vec a,\pzcV(\gamma)\rangle\) of the volume vector equals (up to a factor of \(2\pi\)) the signed volume swept by rotating a spanning surface around the axis \(\mathbb{R}\vec a\) (see~\secref{sec:Isoperimetric}).

\citet[Cor. 2]{Chern:2020:CHF} show that a curve \(\gamma\in\cM\) is elastic if and only if it is a critical point of
\(\pzcL\) at fixed \(\pzcA_0\) and \(\pzcV_0\). In other words, elastic curves are precisely critical points of the length functional for prescribed area and volume vectors. While at first glance it is not obvious why the area and volume vectors should be
the right constraints for a variational problem on space curves, the present
paper provides an answer based on the following observation: Under a rigid
motion \(h=(Q,\vec c)\in\mathrm{SE}(3)\), they transform as
\begin{align*}
    \pzcA(h\circ\gamma) &= Q\pzcA(\gamma),\\
    \pzcV(h\circ\gamma) &= Q\pzcV(\gamma) + \vec c\times Q\pzcA(\gamma),
\end{align*}
which is precisely the coadjoint action of \(\mathrm{SE}(3)\) (\secref{sec:SpecialEuclideanGroup} \teqref{eq:CoadjointAction}). That is, \(\pzcA\) and \(\pzcV\) respectively transform like linear momentum and angular momentum in 3D kinematics. They constraints are therefore the Noether charges of the rigid-motion symmetry, \ie, the conserved quantities the geometry itself singles
out. We develop this connection in more generality in the following Sections~\ref{sec:Isoperimetric} and~\ref{sec:GeomMech}.

\subsection{General Theory}
\label{sec:Isoperimetric}
The isoperimetric characterization and the transformation laws for the area and volume vectors \(\pzcA\) and \(\pzcV\) extend beyond closed curves. To include open curves, we regard a curve segment as one fundamental domain of a \emph{quasi-periodic curve}, by which we mean a curve that repeats up to an orientation-preserving rigid motion \(g\in\mathrm{SE}(3)\).

More precisely, let \(\RR\) have its fixed orientation, and let \(\tau\colon\RR\to\RR\) denote a nontrivial translation in the positive direction. We denote by \(\cM_g\) the space of smooth immersions \(\gamma\colon \RR\to\RR^3\) satisfying
\begin{equation*}
    \tau^*\gamma=g\circ\gamma,
\end{equation*}
or equivalently, \(\gamma(\tau(t))=g(\gamma(t))\). Here, \(g\) is the
\emph{monodromy}, and any interval \(I=[t_0,\tau(t_0)]\) for \(t_0\in\RR\) is a \emph{fundamental domain}. The case \(g=\id\) recovers closed curves, while general \(g\) describes open curves together with a prescribed rigid motion relating consecutive copies.

\begin{figure}[b]
    \centering
    \includegraphics[width = \columnwidth]{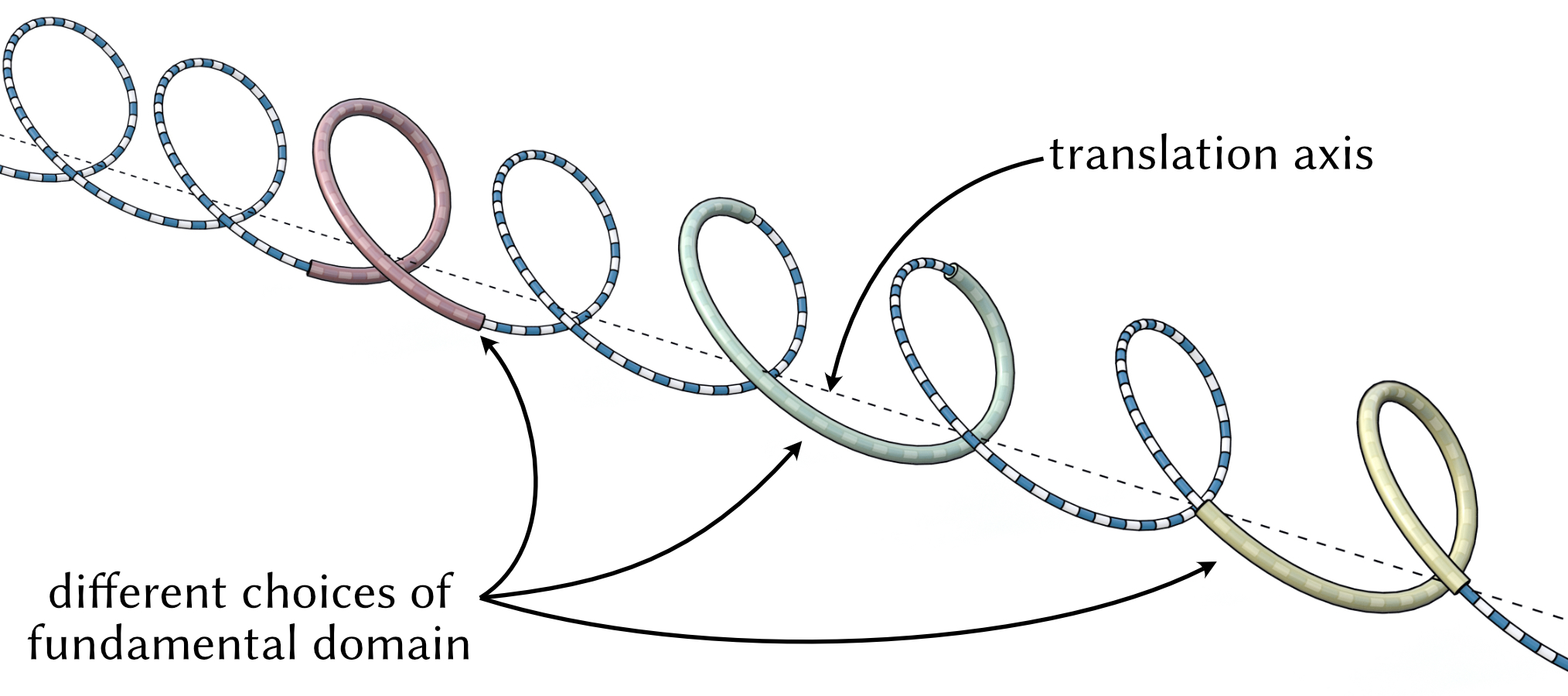}
    \caption{A quasi-periodic curve with translational monodromy is represented by one fundamental domain, whose copies differ by a fixed translation.}
    \label{fig:QuasiPeriodicFundamentalDomainsPureTranslation}
\end{figure}
In this framework, the area and volume vectors admit canonical extensions, and their well-defined components depend only on the monodromy. Thus closed and open elastica can be treated within a single framework.

\begin{remark}
\label{rem:ScrewAxis}
By a suitable choice of coordinate origin, the monodromy \(g\) always takes
the form of a screw motion (see~\appref{app:NormalForm}), that is, a
rotation about and translation along an axis \(\mathbb{R}\vec s_g\) for some
unit vector \(\vec s_g\in S^2\subset\mathbb{R}^3\), which we call the
\emph{screw axis} of \(g\). 
\end{remark}

\begin{figure}[t]
    \centering
    \includegraphics[width = \columnwidth]{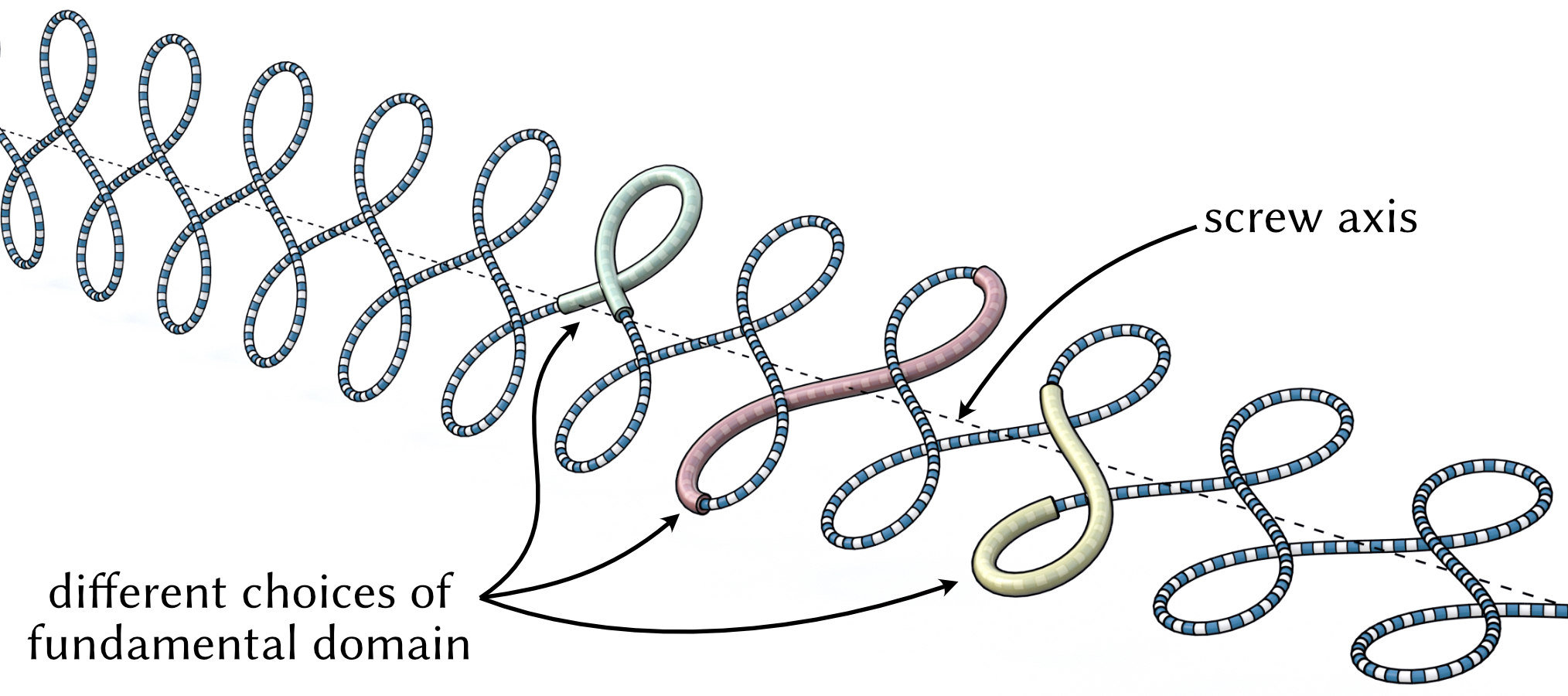}
    \caption{A quasi-periodic curve with screw monodromy repeats by a rotation and translation along the same axis, so that one fundamental domain represents the full infinite curve.}
    \label{fig:QuasiPeriodicFundamentalDomains}
\end{figure}
\subsubsection{Monodromy cases and compatible rigid motions}
\label{sec:MonodromyCases}
For elements \(g\in\SE(3)\) write \(g=(R,\vec b)\), where \(R\in\SO(3)\) is a rotation and \(\vec b\in \RR^3\), and distinguish three cases. If \(g=\id\), the
curve is closed (see~\figref{fig:ProjectedAreaClosed}). If \(g=(\Id,\vec b)\) with
\(\vec b\neq 0\), consecutive copies differ by a pure translation
(see~\figref{fig:QuasiPeriodicFundamentalDomainsPureTranslation}). If \(g=(R,\vec b)\) with
\(R\neq \Id\), then, after choosing the origin on its axis, \(g\) is a screw
motion: a rotation about an axis together with a translation along the same
axis (see~\figref{fig:QuasiPeriodicFundamentalDomains}). This distinction determines which
components of the area and volume vectors are independent of the chosen
fundamental domain.

\begin{figure}[h]
    \centering
\includegraphics[width=\columnwidth]{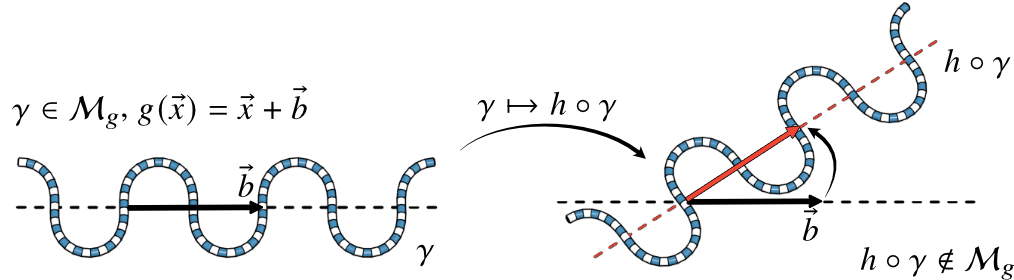}
    \caption{A rigid motion \(h\) generally changes the monodromy of a quasi-periodic curve from \(g\) to \(hgh^{-1}\). In the translational example shown, \(h\) rotates the translation direction \(\vec b\), so \(h\circ\gamma\notin\cM_g\). Thus only rigid motions in \(Z(g)\) preserve \(\cM_g\).}
    \label{fig:MonodromyNotPreserved}
\end{figure}
A rigid motion need not preserve the space \(\cM_g\) (\figref{fig:MonodromyNotPreserved}). Indeed, if
\(h\in\SE(3)\) and \(\gamma\in\cM_g\), then
\begin{equation*}
    (h\circ\gamma)(\tau(t))
    =
    h(g(\gamma(t)))
    =
    (hgh^{-1})((h\circ\gamma)(t)).
\end{equation*}
Thus \(h\circ\gamma\in\cM_{hgh^{-1}}\). The rigid motions preserving
\(\cM_g\) are therefore precisely those commuting with \(g\). The corresponding subgroup of \(\SE(3)\) is called the \emph{centralizer} of \(g\in\SE(3)\) and denoted by
\begin{equation*}
    Z(g) \coloneqq \{h\in\SE(3)\mid hg=gh\}.
\end{equation*}

We distinguish three cases for \(g=(R,\vec b)\in\SE(3)\):
\begin{enumerate}[(i)]
    \item If \(g=\id\), then \(Z(g)=\SE(3)\).
    \item If \(R=\Id\) and \(\vec b\neq0\), then
    \begin{equation*}
        Z(g)
        =
        \{(Q,\vec c)\in\SE(3)\mid Q\vec b=\vec b,\ \vec c\in\RR^3\}.
    \end{equation*}
    so \(Z(g)\) consists of rotations about the direction \(\vec s_g=\vec b/|\vec b|\) together with arbitrary translations.
    \item If \(R\neq \Id\) and \(g\) is not a pure half-turn\footnote{Pure half-turn monodromy has an additional disconnected centralizer component characterized by \(Q\vec s_g=-\vec s_g\). We exclude this case throughout. Since \(g^2=\id\), two consecutive fundamental domains form a closed curve, so it may instead be treated within the closed theory with an imposed half-turn symmetry.}, choose the origin on its screw axis \(\RR\vec s_g\) and write \(\vec b=\ell\vec s_g\). Then
    \begin{equation*}
        Z(g)
        =
        \{(Q,\vec c)\in\SE(3)\mid Q\vec s_g=\vec s_g,\ 
        \vec c\parallel\vec s_g\}.
    \end{equation*}
\end{enumerate}

\subsubsection{Area vector of a space curve}
How does one define an area-type quantity for a space curve when there is
no canonical spanning surface whose area can be measured? A natural answer
is to look at projections to \(2D\) planes. Fix
\(\vec a\in S^2\subset\RR^3\) and project \(\gamma\) orthogonally onto
\(\vec a^\perp\). 
Then, for a closed curve \(\gamma\in\cM_{\id}\), the
signed area enclosed by the projection of \(\gamma\) to the plane
\(\vec a^\perp\) is given by
\begin{align*}
    \pzcA_{\vec a}(\gamma)
    &\coloneqq \tfrac{1}{2}\oint
    \langle\vec a,\,
    (\vec\gamma - \langle\vec\gamma,\vec a\rangle\vec a)
    \times
    (\vec\gamma' - \langle\vec\gamma',\vec a\rangle\vec a)\rangle\,ds \\
    &= \tfrac{1}{2}\oint
    \langle\vec a,\,\vec\gamma\times\vec\gamma'\rangle\,ds \\
    &= \Bigl\langle\vec a,\,
    \tfrac{1}{2}\oint \vec\gamma\times\vec\gamma'\,ds\Bigr\rangle,
\end{align*}
where the area form on \(\vec a^\perp\) is given by
\(\langle\vec a,\cdot\,\times\cdot\rangle\). This motivates the definition
of the \emph{area vector}
\begin{equation}
    \label{eq:AreaVector}
    \tfrac{1}{2}\oint \vec\gamma\times\vec\gamma'\,ds
    \in \RR^3,
\end{equation}
whose component \(\langle\vec a,\tfrac{1}{2}\oint \vec\gamma\times\vec\gamma'\,ds\rangle\) is the curve's signed
projected area along \(\vec a\) (\figref{fig:ProjectedAreaClosed}). 

For a non-closed quasi-periodic curve, changing the fundamental
domain produces a boundary term, so the line integral in
\teqref{eq:AreaVector} is not well defined by itself. 
\lemref{thm:AreaFundamentalDomainIndependent} states that the relevant components of the \emph{completed} area vector 
\begin{equation}
    \label{eq:AreaVectorCompleted}
    \pzcA(\gamma)
    \coloneqq
    \tfrac{1}{2}\int_I \vec\gamma\times\vec\gamma'\,ds
    \, -\tfrac{1}{2}\vec b\times\vec\gamma(t_0)
    \in \RR^3,
\end{equation}
become independent of the chosen fundamental domain (\figref{fig:ProjectedAreas}).\\
 
 \begin{figure}[h]
    \centering
    \includegraphics[width = \columnwidth]{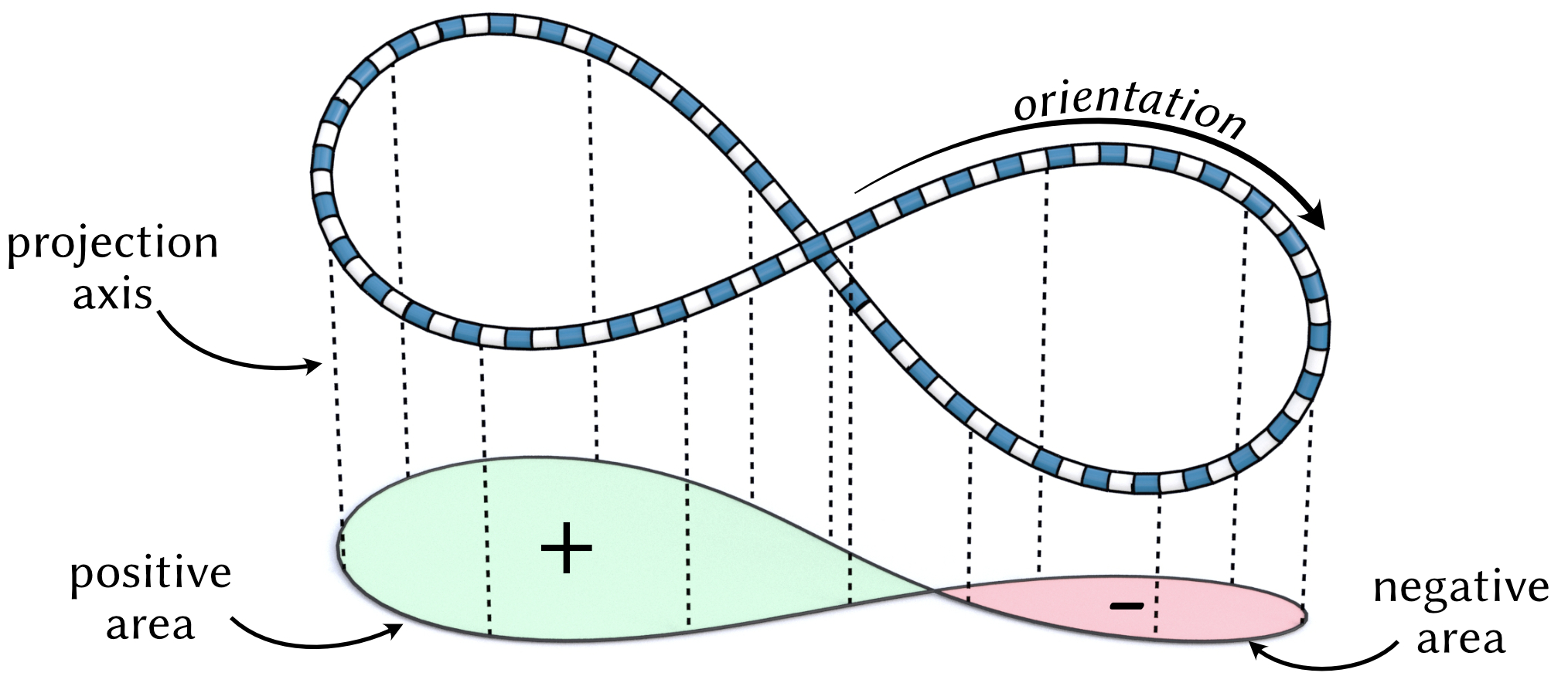}
    \caption{The component of the area vector in the projection direction records the signed area enclosed by the projected curve, with sign determined by orientation.}
    \label{fig:ProjectedAreaClosed}
\end{figure}

\begin{lemma}
\label{thm:AreaFundamentalDomainIndependent}
Let \(\gamma\in\cM_g\) have monodromy \(g=(R,\vec b)\), and let \(\pzcA(\gamma)\) be defined by \teqref{eq:AreaVectorCompleted} on a fundamental domain \(I=[t_0,\tau(t_0)]\). Then the following area data are independent of the choice of \(I\):
\begin{enumerate}[(i)]
    \item If \(g=\id\), the full vector \(\pzcA(\gamma)\).

    \item If \(R=\Id\) and \(\vec b\neq0\), the full vector \(\pzcA(\gamma)\).

    \item If \(R\neq \Id\), after choosing the origin on the screw axis \(\vec s_g\) of \(g\), the projection
    \(\langle\vec s_g,\pzcA(\gamma)\rangle\) onto the screw axis.
\end{enumerate}
\end{lemma}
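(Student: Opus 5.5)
The plan is to differentiate $\pzcA(\gamma)$, computed on $I=[t_0,\tau(t_0)]$, with respect to the starting point $t_0$ and show that the relevant components have zero derivative. Write $F(t_0)=\tfrac12\int_{t_0}^{\tau(t_0)}\vec\gamma\times\vec\gamma'\,dt-\tfrac12\vec b\times\vec\gamma(t_0)$; the integrand $\vec\gamma\times\vec\gamma'\,ds$ is parametrization invariant, so we can use any parameter. Since $\tau$ is a translation, $\tau'=1$, and the fundamental theorem of calculus gives
\begin{equation*}
F'(t_0)=\tfrac12\bigl(\vec\gamma\times\vec\gamma'\bigr)(\tau(t_0))-\tfrac12\bigl(\vec\gamma\times\vec\gamma'\bigr)(t_0)-\tfrac12\vec b\times\vec\gamma'(t_0).
\end{equation*}
The quasi-periodicity $\vec\gamma(\tau(t))=R\vec\gamma(t)+\vec b$ implies, after differentiating, $\vec\gamma'(\tau(t))=R\vec\gamma'(t)$. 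Substituting and using $R\vec u\times R\vec v=R(\vec u\times\vec v)$ for $R\in\SO(3)$ gives
\begin{equation*}
F'(t_0)=\tfrac12(R-\Id)(\vec\gamma\times\vec\gamma')(t_0)+\tfrac12\vec b\times(R-\Id)\vec\gamma'(t_0).
\end{equation*}

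The three cases then follow from this identity. In cases (i) and (ii) we have $R=\Id$, so both terms vanish and $F'\equiv0$. Hence the full vector $\pzcA(\gamma)$ is independent of $t_0$. In case (i) this is just periodicity of the closed integral with $\vec b=0$. The correction term $-\tfrac12\vec b\times\vec\gamma(t_0)$ is what cancels the boundary contribution in case (ii).

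In case (iii) we place the origin on the screw axis, so that $R\vec s_g=\vec s_g$ and $\vec b=\ell\vec s_g$. We then pair $F'$ with $\vec s_g$. For the first term, $\langle\vec s_g,(R-\Id)\vec w\rangle=\langle (R^\top-\Id)\vec s_g,\vec w\rangle=0$, since $R^\top\vec s_g=\vec s_g$. The second term becomes $\tfrac{\ell}{2}\langle\vec s_g,\vec s_g\times(\cdot)\rangle=0$. So $\langle\vec s_g,F'\rangle\equiv0$.

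Since any two fundamental domains differ by a change of $t_0\in\RR$ and $F$ is smooth in $t_0$, the vanishing derivative gives independence of the choice of $I$ in each case. The only step needing care is the derivative of the quasi-periodicity relation. It relies on $\tau$ being a translation, so that $\tau'=1$, and on the arc-length derivative being compatible with $g$; this holds because $g$ is an isometry, so $|\vec\gamma'|$ is itself $\tau$-periodic. This step is routine. I expect no real obstacle beyond keeping the sign of the correction term consistent.
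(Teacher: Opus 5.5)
Your proposal is correct and is essentially the paper's argument. The paper proves case (ii) by exactly your $t_0$-derivative computation. For case (i) it uses periodicity of the integrand, and for case (iii) it first drops the completion term via \lemref{lem:AxialClosedForms}, which has no $\vec s_g$-component since $\vec b\parallel\vec s_g$, and then uses $\tau$-periodicity of the axial integrand. This is the same cancellation you get by pairing your unified formula $F'(t_0)=\tfrac12(R-\Id)(\vec\gamma\times\vec\gamma')(t_0)+\tfrac12\vec b\times(R-\Id)\vec\gamma'(t_0)$ with $\vec s_g$, which also treats all three cases at once.
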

\begin{proof}
    See~\appref{sec:AreaTransformation}.
\end{proof}

\begin{figure}[t]
    \centering
    \includegraphics[width = \columnwidth]{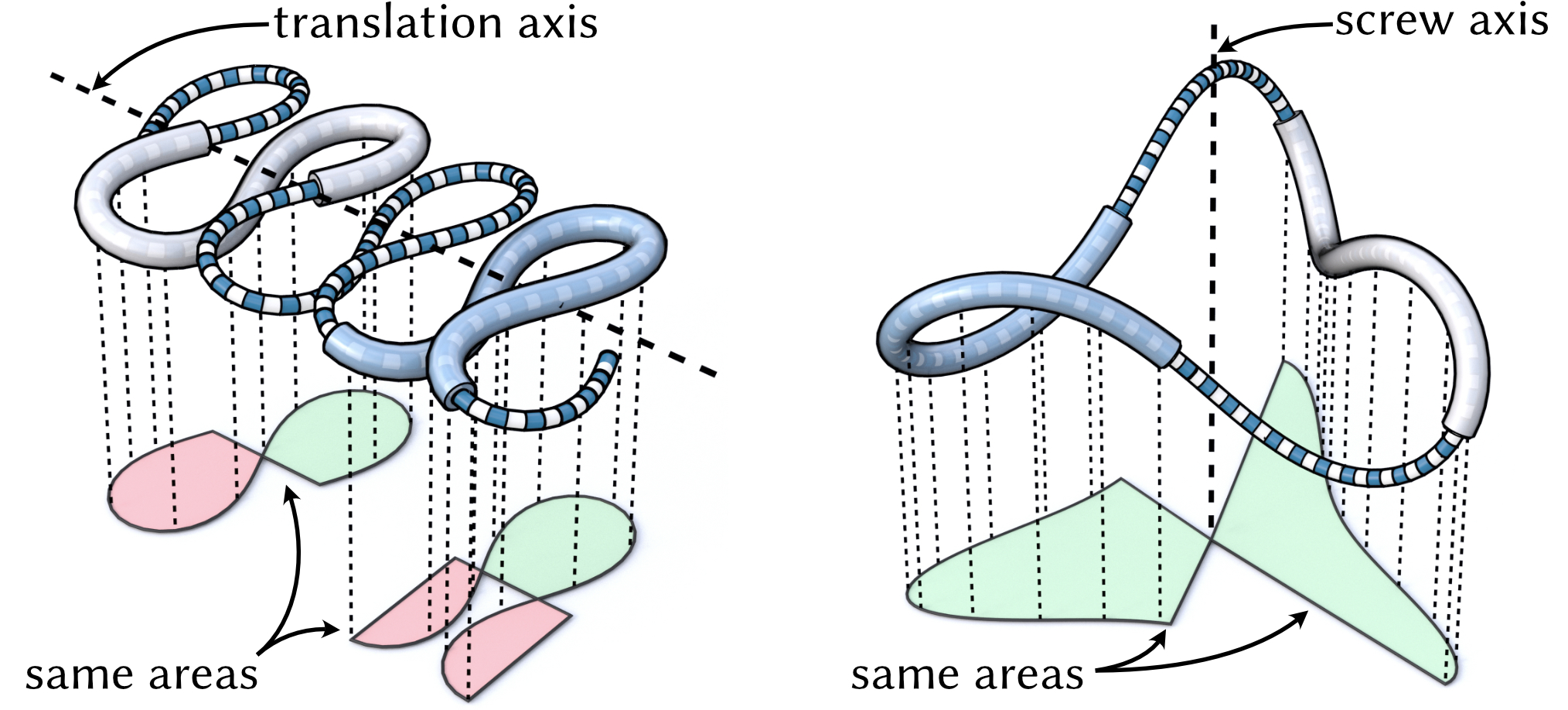}
    \caption{The intrinsic components of the area vector depend on the monodromy. For pure translations the full area vector is well-defined, while for screw monodromy only the axial component is independent of the chosen fundamental domain.}
    \label{fig:ProjectedAreas}
\end{figure}

\subsubsection{Volume vector of a space curve}
\label{sec:VolumeVector}
To define the analogous volume-type quantity, again start with a closed
curve. Fix \(\vec a\in S^2\subset\RR^3\) and consider the rotational vector
field \(X_{\vec a}(\vec r)=\vec a\times\vec r\). Since
\(\operatorname{div}X_{\vec a}=0\), the flux
\begin{equation}
    \label{eq:FluxFormula}
    \Upsilon_{\vec a}(\gamma)
    \coloneqq
    \iint_{\Sigma}\langle X_{\vec a}(\vec r),N\rangle\,dS
\end{equation}
through any surface \(\Sigma\) with boundary \(\gamma\) and surface unit
normal \(N\) is independent of the choice of \(\Sigma\). Evaluating this
flux on the cone
\(\Sigma=\{u\,\vec\gamma(s)\mid u\in[0,1],\,s\in I\}\) gives
\begin{align*}
    \Upsilon_{\vec a}(\gamma)
    &= \int_0^1\oint
    \langle\vec a\times(u\,\vec\gamma),
    \partial_u \vec r\times\partial_s \vec r\rangle\,ds\,du \\
    &= \int_0^1u^2\,du\oint
    \langle\vec a\times\vec\gamma,
    \vec\gamma\times\vec\gamma'\rangle\,ds \\
    &= \tfrac{1}{3}\oint
    \langle\vec a\times\vec\gamma,
    \vec\gamma\times\vec\gamma'\rangle\,ds \\
    &= \Bigl\langle\vec a,\,
    \tfrac{1}{3}\oint
    \vec\gamma\times(\vec\gamma\times\vec\gamma')\,ds\Bigr\rangle.
\end{align*}
This motivates the definition of the \emph{volume vector} (\figref{fig:RevolutionVolume})
\begin{equation}
    \label{eq:VolumeVector}
    \tfrac{1}{3}\oint
    \vec\gamma\times(\vec\gamma\times\vec\gamma')\,ds
    \in\RR^3.
\end{equation}
\begin{figure}[h]
    \centering
    \includegraphics[width = \columnwidth]{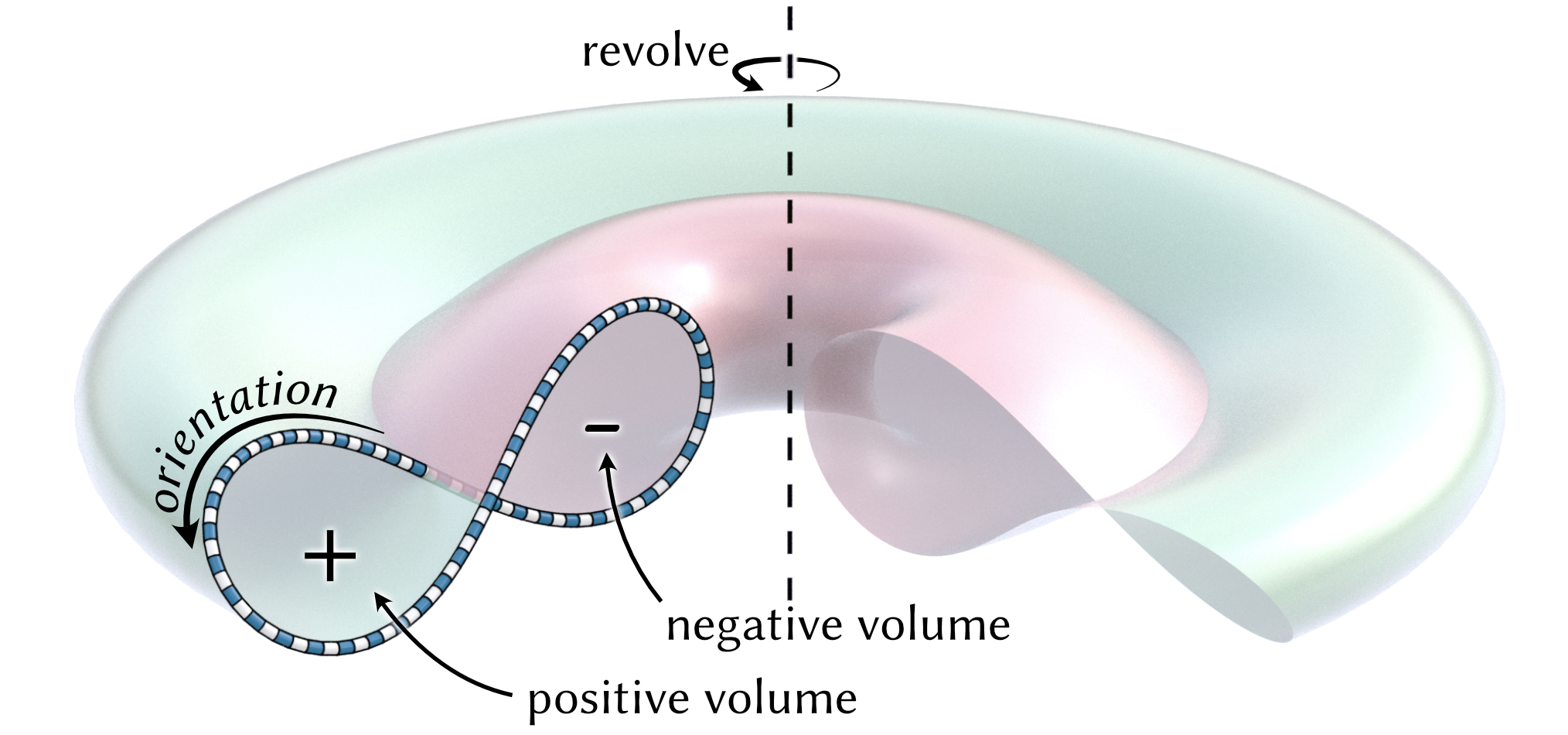}
    \caption{The corresponding component of the volume vector measures the signed volume swept by rotating a spanning surface around the chosen axis.}
    \label{fig:RevolutionVolume}
\end{figure}
For a closed curve, the component \(\langle\vec a,\pzcV(\gamma)\rangle\) is the flux of the rotational vector field \(X_{\vec a}(\vec r)=\vec a\times\vec r\), which is proportional to the signed volume swept out by rotating a spanning surface around this axis. 

Analogously, the volume vector for a non-closed quasi-periodic curve requires additional care: the cone has an additional boundary, so the line integral in \teqref{eq:VolumeVector} depends on the chosen fundamental domain. \lemref{thm:VolumeFundamentalDomainIndependent} states that the relevant components of the \emph{completed} volume vector
\begin{equation}
    \label{eq:VolumeVectorCompleted}
    \pzcV(\gamma)
    \coloneqq
    \tfrac{1}{3}\int_I
    \vec\gamma\times(\vec\gamma\times\vec\gamma')\,ds
    \, -\tfrac{1}{6}\vec\gamma(t_0)\times\bigl(\vec b\times\vec\gamma(t_0)\bigr)
    \in\RR^3,
\end{equation}
become independent of the chosen fundamental domain \([t_0,\tau(t_0)]\).

\begin{lemma}
\label{thm:VolumeFundamentalDomainIndependent}
Let \(\gamma\in\cM_g\) have monodromy \(g=(R,\vec b)\), and let \(\pzcV(\gamma)\) be defined by \teqref{eq:VolumeVectorCompleted} on a fundamental domain \(I=[t_0,\tau(t_0)]\). Then the following volume data are independent of the choice of \(I\):
\begin{enumerate}[(i)]
    \item If \(g=\id\), the full vector \(\pzcV(\gamma)\).

    \item If \(R=\Id\) and \(\vec b\neq0\), with \(\vec s_g=\vec b/|\vec b|\), the projection
    \(\langle\vec s_g,\pzcV(\gamma)\rangle\) onto the chosen translation axis.

    \item If \(R\neq \Id\), after choosing the origin on the screw axis \(\vec s_g\) of \(g\), the projection
    \(\langle\vec s_g,\pzcV(\gamma)\rangle\) onto the screw axis.
    \end{enumerate}
\end{lemma}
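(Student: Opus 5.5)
The plan is to show that the relevant components of \(\pzcV(\gamma)\) do not change as the base point \(t_0\) of the fundamental domain moves. Since any two fundamental domains are connected by continuously moving \(t_0\), it suffices to differentiate \(\pzcV(\gamma)\) in \(t_0\) and show that the appropriate projection of the derivative vanishes. This is the same strategy I would expect for \lemref{thm:AreaFundamentalDomainIndependent}.

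\textbf{Setup.} Write \(\tau(t)=t+T\) and \(\vec f=\vec\gamma\times(\vec\gamma\times\vec\gamma')\). Put \(\vec x=\vec\gamma(t_0)\) and \(\vec v=\vec\gamma'(t_0)\), with derivatives taken in the parameter \(t\). The integrand \(\vec f\,dt\) is parametrization-invariant, so this is harmless. By the fundamental theorem of calculus,
\begin{equation*}
\tfrac{d}{dt_0}\pzcV(\gamma)=\tfrac13\bigl(\vec f(t_0+T)-\vec f(t_0)\bigr)-\tfrac16\bigl(\vec v\times(\vec b\times\vec x)+\vec x\times(\vec b\times\vec v)\bigr).
\end{equation*}
Quasi-periodicity gives \(\vec\gamma(t_0+T)=R\vec x+\vec b\) and \(\vec\gamma'(t_0+T)=R\vec v\). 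Hence
\begin{equation*}
\vec f(t_0+T)=(R\vec x+\vec b)\times\bigl((R\vec x+\vec b)\times R\vec v\bigr).
\end{equation*}
Expand this in powers of \(\vec b\). The zeroth-order term is \(R\bigl(\vec x\times(\vec x\times\vec v)\bigr)\). The first-order terms are \(R\vec x\times(\vec b\times R\vec v)+\vec b\times(R\vec x\times R\vec v)\). The second-order term is \(\vec b\times(\vec b\times R\vec v)\).

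\textbf{Case (i).} Here \(R=\Id\) and \(\vec b=0\), so \(\vec f\) is periodic and the boundary term vanishes. The derivative is therefore zero and the full vector is independent of \(I\).

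\textbf{Cases (ii) and (iii): the zeroth- and second-order terms.} Let \(\vec s=\vec s_g\), so \(\vec b=\ell\vec s\) with \(R\vec s=\vec s\); in case (ii) \(R=\Id\). The zeroth-order term pairs with \(\vec s\) to give
\begin{equation*}
\langle\vec s,R\vec w\rangle=\langle R^{\mathsf T}\vec s,\vec w\rangle=\langle\vec s,\vec w\rangle,\qquad \vec w=\vec x\times(\vec x\times\vec v),
\end{equation*}
so it cancels the \(-\tfrac13\vec f(t_0)\) term in the \(\vec s\)-component. Both \(\vec b\times(\cdot)\) terms are orthogonal to \(\vec s\), so the second-order term and the second first-order term drop out.

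\textbf{Cases (ii) and (iii): the remaining cross terms.} What remains in the \(\vec s\)-component is
\begin{equation*}
\tfrac13\langle\vec s,R\vec x\times(\vec b\times R\vec v)\rangle=\tfrac1{3\ell}\langle\vec b\times R\vec v,\vec b\times R\vec x\rangle .
\end{equation*}
Since \(R\) fixes \(\vec b\), it commutes with \(\vec b\times\cdot\), and \(R\) is orthogonal. So this equals \(\tfrac1{3\ell}\langle\vec b\times\vec v,\vec b\times\vec x\rangle\). The boundary term contributes
\begin{equation*}
-\tfrac16\bigl(\langle\vec s,\vec v\times(\vec b\times\vec x)\rangle+\langle\vec s,\vec x\times(\vec b\times\vec v)\rangle\bigr).
\end{equation*}
By the triple-product identity \(\langle\vec s,\vec u\times\vec w\rangle=\langle\vec w,\vec s\times\vec u\rangle\), each summand equals \(\tfrac1\ell\langle\vec b\times\vec x,\vec b\times\vec v\rangle\). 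The boundary term is therefore \(-\tfrac1{3\ell}\langle\vec b\times\vec x,\vec b\times\vec v\rangle\), which cancels exactly.

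The main obstacle is this bookkeeping of the cross terms. The coefficient \(\tfrac16\) in \teqref{eq:VolumeVectorCompleted} is precisely what makes the cancellation work, and the screw case needs the origin on the axis so that \(\vec b\parallel\vec s\) and \(R\) commutes with \(\vec b\times\cdot\).
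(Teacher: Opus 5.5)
Your proof is correct, but it takes a different route from the paper. You differentiate the completed vector in \(t_0\) and show that the \(\vec s_g\)-component of the derivative vanishes term by term. The paper never differentiates in \(t_0\) for the volume. It first proves \lemref{lem:AxialClosedForms}, which rests on the pointwise identity
\[
\tfrac13\langle\vec s_g,\vec\gamma\times(\vec\gamma\times\vec\gamma')\rangle=-\tfrac12|\vec\gamma^\perp|^2\langle\vec s_g,\vec\gamma'\rangle+\tfrac16\tfrac{d}{ds}\bigl(\langle\vec s_g,\vec\gamma\rangle\,|\vec\gamma^\perp|^2\bigr).
\]
The total-derivative term integrates to \(\tfrac16\ell|\vec\gamma^\perp(t_0)|^2\), and the completion cancels it exactly. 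This leaves \(\langle\vec s_g,\pzcV(\gamma)\rangle=-\tfrac12\int_I|\vec\gamma^\perp|^2\langle\vec s_g,\vec\gamma'\rangle\,ds\), whose integrand is manifestly \(\tau\)-periodic.

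Your cross-term bookkeeping is the infinitesimal version of this argument. Both surviving contributions are multiples of \(\ell\langle\vec s_g\times\vec x,\vec s_g\times\vec v\rangle=\tfrac{\ell}{2}\tfrac{d}{dt_0}|\vec\gamma^\perp(t_0)|^2\), which is exactly the \(t_0\)-derivative of the paper's boundary term. Your version is self-contained and mirrors the paper's own derivative argument for the translational area case. It also makes transparent why the coefficient \(\tfrac16\) is forced. The paper's version instead produces a closed form that it reuses for the axial area component and for the transformation laws in \lemref{lem:VolumeCovariance}.

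One small repair is needed. In case (iii) the monodromy may be a pure rotation. After placing the origin on the axis, this gives \(\ell=0\), so writing terms as \(\tfrac1\ell\langle\vec b\times\cdot,\vec b\times\cdot\rangle\) divides by zero. You can treat \(\vec b=0\) separately, since every \(\vec b\)-dependent term then vanishes trivially. Alternatively, use \(\langle\vec s_g,\vec u\times(\vec b\times\vec w)\rangle=\ell\,\langle\vec s_g\times\vec w,\vec s_g\times\vec u\rangle\) throughout, which makes the cancellation uniform in \(\ell\).
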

\begin{proof}
    See~\appref{sec:VolumeTransformation}.
\end{proof}

\subsubsection{Isoperimetric characterization of quasi-periodic elastic curves}
We now turn to the isoperimetric characterization of
elastic curves for quasi-periodic curves. The constraints are the
well-defined components of the area and volume vectors, which depend on
the monodromy as established in Lemmas~\ref{thm:AreaFundamentalDomainIndependent}
and~\ref{thm:VolumeFundamentalDomainIndependent}.

\begin{theorem}
\label{thm:Isoperimetric}
Let \(\gamma\in\cM_g\) have monodromy \(g=(R,\vec b)\), and choose the origin on the screw axis whenever \(R\neq\Id\). For any \(\vec\lambda_1,\vec\lambda_2\in\RR^3\), where \(\vec\lambda_1\) is arbitrary if \(g=\id\) and parallel to \(\vec s_g\) otherwise, while \(\vec\lambda_2\) is arbitrary if \(R=\Id\) and parallel to \(\vec s_g\) if \(R\neq\Id\), the following are equivalent:
\begin{enumerate}[(i)]
    \item The functional \(\pzcL+\langle\pzcV\mid\vec\lambda_1\rangle+\langle\pzcA\mid\vec\lambda_2\rangle\) is stationary at \(\gamma\) with respect to every variation \(\mathring\gamma\in T_\gamma\cM_g\), \ie, \(\mathring{\vec\gamma}(\tau(t))=R\mathring{\vec\gamma}(t)\).

    \item The corresponding local first variation on the underlying infinite curve vanishes for every compactly supported variation \(\mathring{\vec\gamma}\in C_0^\infty(\RR,\RR^3)\).
\end{enumerate}
Moreover, \(\gamma\) is elastic if and only if there exists a pair \((\vec\lambda_1,\vec\lambda_2)\) satisfying the conditions above for which either, and hence both, of these equivalent conditions holds.
\end{theorem}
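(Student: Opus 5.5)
The plan is to compute the first variation of \(\pzcL+\langle\pzcV\mid\vec\lambda_1\rangle+\langle\pzcA\mid\vec\lambda_2\rangle\) on a fundamental domain \(I=[t_0,\tau(t_0)]\) and split it into an interior (local) part and boundary terms. For arc-length \(\gamma\), integrating by parts gives \(\mathring{\pzcL}=-\int_I\langle\vec\gamma'',\mathring{\vec\gamma}\rangle\,ds+[\langle\vec\gamma',\mathring{\vec\gamma}\rangle]_{t_0}^{\tau(t_0)}\). For the area vector,
\[
\mathring{\bigl(\tfrac12\textstyle\int_I\vec\gamma\times\vec\gamma'\bigr)}=\textstyle\int_I\mathring{\vec\gamma}\times\vec\gamma'\,ds+\tfrac12[\vec\gamma\times\mathring{\vec\gamma}],
\]
so its density paired with \(\vec\lambda_2\) is \(\langle\mathring{\vec\gamma},\vec\gamma'\times\vec\lambda_2\rangle\). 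For the volume vector, I would use the flux interpretation of \secref{sec:VolumeVector}: \(\langle\vec\lambda_1,\cdot\rangle\) is the flux of the divergence-free field \(\vec\lambda_1\times\vec r\). Its variation is therefore \(\int_I\langle\vec\lambda_1\times\vec\gamma,\mathring{\vec\gamma}\times\vec\gamma'\rangle\,ds\) plus the variation of the flux through the cone's side faces, which gives boundary terms. The local Euler--Lagrange equation on the infinite curve, \ie\ condition (ii), is then
\[
-\vec\gamma''+\vec\gamma'\times(\vec\lambda_1\times\vec\gamma)+\vec\gamma'\times\vec\lambda_2=0 .
\]

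For (i)\(\Leftrightarrow\)(ii), I would argue as follows.
\begin{itemize}
\item \emph{(i)\(\Rightarrow\)(ii).} Take any variation with compact support in the interior of some fundamental domain and extend it \(R\)-equivariantly to an element of \(T_\gamma\cM_g\). The boundary terms vanish, so the local density vanishes on that interior. Since \(t_0\) is arbitrary, it vanishes on all of \(\RR\). A general compactly supported variation is then handled by a partition of unity.
\item \emph{(ii)\(\Rightarrow\)(i).} The interior integral vanishes identically, so it remains to show that the boundary terms at \(t_0\) and \(\tau(t_0)\) cancel for equivariant \(\mathring{\vec\gamma}\).
\end{itemize}
For this cancellation I would substitute \(\vec\gamma(\tau(t_0))=R\vec\gamma(t_0)+\vec b\) and \(\mathring{\vec\gamma}(\tau(t_0))=R\mathring{\vec\gamma}(t_0)\). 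The length term cancels by rotation invariance. The residual area and volume boundary terms should be exactly what the completion terms \(-\tfrac12\vec b\times\vec\gamma(t_0)\) and \(-\tfrac16\vec\gamma(t_0)\times(\vec b\times\vec\gamma(t_0))\) in \teqref{eq:AreaVectorCompleted}–\teqref{eq:VolumeVectorCompleted} are designed to absorb. This absorption holds only after pairing with \(\vec\lambda_1,\vec\lambda_2\) obeying the stated parallelity conditions, since then \(R^{\mathsf T}\vec\lambda_i=\vec\lambda_i\) and \(\langle\vec\lambda_1,\vec b\times\cdot\rangle\) behaves correctly. These are the same computations as in the appendices proving Lemmas~\ref{thm:AreaFundamentalDomainIndependent} and~\ref{thm:VolumeFundamentalDomainIndependent}, and I would reuse them. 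I expect this boundary bookkeeping, case by case in the monodromy types (i)–(iii) of \secref{sec:MonodromyCases}, to be the main technical obstacle.

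For the elastic equivalence, I would cross the local equation with \(\vec\gamma'\). Using \(|\vec\gamma'|=1\) and \(\vec\gamma''\perp\vec\gamma'\), this yields
\[
\vec\gamma'\times\vec\gamma''=\vec\lambda_1\times\vec\gamma+\vec\lambda_2-\langle\vec\gamma',\vec\lambda_1\times\vec\gamma+\vec\lambda_2\rangle\vec\gamma'.
\]
The last term is tangential. I would check that the tangential coefficient is constant, by differentiating and using the equation; this gives the \(\lambda_3\) of \teqref{eq:SRFtoElasticRelation}. The relation then becomes \teqref{eq:SRFtoElasticRelation} with \(\vec c_1=\vec\lambda_1\) and \(\vec c_2=\vec\lambda_2\). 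Conversely, that relation crossed with \(\vec\gamma'\) returns the local equation, and \teqref{eq:SRFtoElasticRelation} is already shown equivalent to \teqref{eq:ElasticELEqClassic}.

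The delicate point is the direction elastic \(\Rightarrow\) (ii) with admissible \(\vec\lambda\)'s. Given an elastic \(\gamma\in\cM_g\) with constants \(\vec c_1,\vec c_2\), I must show these automatically satisfy the parallelity constraints. The approach is to evaluate \teqref{eq:SRFtoElasticRelation} at \(t\) and at \(\tau(t)\). Using \(\vec\gamma'\circ\tau=R\vec\gamma'\), the left side and the tangential term are \(R\)-equivariant, which forces \(R\vec c_1\times(R\vec\gamma+\vec b)+\vec c_2'=R(\vec c_1\times\vec\gamma+\vec c_2)\)-type identities for all \(t\), where \(\vec c_2'\) stands for the corresponding term after transport. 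Since an immersed curve that is not a straight line spans enough directions, this should give \(R\vec c_1=\vec c_1\), \(\vec c_1\times\vec b=0\) and \(R\vec c_2=\vec c_2\). In each monodromy case these are exactly the constraints of the theorem; straight lines are handled separately, as they are trivially critical with \(\vec\lambda_1=\vec\lambda_2=0\).
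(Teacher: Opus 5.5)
Your plan follows the paper's proof closely: interior density plus seam terms, \(R\)-equivariant extension of interior variations for (i)\(\Rightarrow\)(ii), crossing with \(\vec\gamma'\) with a constant tangential coefficient, comparing \teqref{eq:SRFtoElasticRelation} at \(t\) and \(\tau(t)\), and treating straight lines separately. Two steps are not supplied correctly, however.

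First, the seam cancellation needed for (ii)\(\Rightarrow\)(i) is not ``the same computation'' as in the appendix proofs of \lemref{thm:AreaFundamentalDomainIndependent} and \lemref{thm:VolumeFundamentalDomainIndependent}. Those proofs differentiate the completed vectors in \(t_0\) or use the periodic axial integrands of \lemref{lem:AxialClosedForms}; they never vary \(\gamma\). The paper checks the seam terms separately (\lemref{lem:QuasiPeriodicFirstVariation}):
\begin{itemize}
\item For \(R=\Id\), the area seam term \(\tfrac12\langle\vec\lambda_2,\vec\gamma\times\mathring{\vec\gamma}\rangle|_{t_0}^{\tau(t_0)}=\tfrac12\langle\vec\lambda_2,\vec b\times\mathring{\vec\gamma}(t_0)\rangle\) is cancelled by the variation of \(-\tfrac12\vec b\times\vec\gamma(t_0)\).
\item For \(R\neq\Id\), both area contributions vanish because \(R^\top\vec\lambda_2=\vec\lambda_2\parallel\vec b\).
\item The volume seam term reduces to \(\tfrac13\bigl(\langle\vec\lambda_1,\vec b\rangle\langle\vec\gamma,\mathring{\vec\gamma}\rangle-\langle\vec b,\vec\gamma\rangle\langle\vec\lambda_1,\mathring{\vec\gamma}\rangle\bigr)(t_0)\). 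The variation of the completion \(-\tfrac16\vec\gamma(t_0)\times(\vec b\times\vec\gamma(t_0))\) cancels it exactly.
\end{itemize}
If you want to genuinely reuse the independence lemmas, you can. Write \(\mathring\gamma=\phi_1\mathring\gamma+\phi_2\mathring\gamma\) with \(\tau\)-periodic cutoffs \(\phi_k\), each vanishing near a seam point \(t_k\), so each piece still lies in \(T_\gamma\cM_g\). The constrained functional is independent of the fundamental domain on all of \(\cM_g\). So you may differentiate it in direction \(\phi_k\mathring\gamma\) using \([t_k,\tau(t_k)]\), where no seam or completion terms arise, and (ii) then kills each piece.

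Second, the identity in your last paragraph puts the rotation on the wrong factor. At \(\tau(t)\), the right-hand side of \teqref{eq:SRFtoElasticRelation} is \(\vec c_1\times(R\vec\gamma+\vec b)+\vec c_2+\lambda_3R\vec\gamma'\). Applying \(R\) to the relation at \(t\) gives \(R\vec c_1\times R\vec\gamma+R\vec c_2+\lambda_3R\vec\gamma'\). With \(R\vec c_1\) on both sides, as you wrote, the \(\vec\gamma\)-dependence cancels, and the ``spans enough directions'' argument has nothing to act on.

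The correct identity is \teqref{eq:ElasticMultiplierCompatibility}:
\[
(\vec c_1-R\vec c_1)\times R\vec\gamma+\vec c_1\times\vec b+\vec c_2-R\vec c_2=0 .
\]
Differentiating gives \((\vec c_1-R\vec c_1)\times R\vec\gamma'=0\), hence \(R\vec c_1=\vec c_1\) for non-straight \(\gamma\). Then \(\vec c_1\times\vec b=0\): directly from the identity when \(R=\Id\), and from \(\vec c_1,\vec b\parallel\vec s_g\) when \(R\neq\Id\). Finally \(R\vec c_2=\vec c_2\).

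There is also a harmless sign slip. Since \(\vec\gamma'\times(\vec\gamma'\times\vec w)=\langle\vec\gamma',\vec w\rangle\vec\gamma'-\vec w\), crossing the local equation gives \(\vec c_1=-\vec\lambda_1\) and \(\vec c_2=-\vec\lambda_2\), as in the paper. This does not affect admissibility.
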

\begin{proof}
    See~\appref{app:IsoperimetricProof}.
\end{proof}
 
\thmref{thm:Isoperimetric} ensures that we do not need to distinguish between variations on the space of curves \(\cM\) under variations with compact support and on the space of quasi-periodic curves \(\cM_g\) under variations compatible with the quasi-periodicity.
 
\begin{corollary}[Isoperimetric characterization of elastic curves]
\label{thm:IsoperimetricCharacterization}
Let \(\gamma\in\cM_g\) have monodromy \(g=(R,\vec b)\in\mathrm{SE}(3)\), and choose the origin on the screw axis of \(g\) whenever \(R\neq\Id\). Then \(\gamma\) is elastic if and only if it is a critical point of \(\pzcL\) at fixed well-defined components of the area and volume vectors.
\end{corollary}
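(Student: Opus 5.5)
The corollary should follow almost directly from \thmref{thm:Isoperimetric} once we phrase ``critical point of \(\pzcL\) at fixed well-defined components'' as a Lagrange multiplier condition. I would work on the constraint manifold inside \(\cM_g\) and use variations \(\mathring\gamma\in T_\gamma\cM_g\), i.e.\ \(\mathring{\vec\gamma}(\tau(t))=R\mathring{\vec\gamma}(t)\). By Lemmas~\ref{thm:AreaFundamentalDomainIndependent} and~\ref{thm:VolumeFundamentalDomainIndependent}, the well-defined components are of the form \(\langle\pzcA\mid\vec\mu\rangle\) and \(\langle\pzcV\mid\vec\nu\rangle\), where \(\vec\mu,\vec\nu\) range over the following subspaces:
\begin{itemize}
\item for \(\pzcA\), all of \(\RR^3\) when \(R=\Id\), and \(\RR\vec s_g\) when \(R\neq\Id\);
\item for \(\pzcV\), all of \(\RR^3\) when \(g=\id\), and \(\RR\vec s_g\) otherwise.
\end{itemize}
These are exactly the admissible ranges of \(\vec\lambda_2\) and \(\vec\lambda_1\) in \thmref{thm:Isoperimetric}. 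So the multiplier functional \(\pzcL+\langle\pzcV\mid\vec\lambda_1\rangle+\langle\pzcA\mid\vec\lambda_2\rangle\), with admissible multipliers, is precisely the Lagrangian of the constrained problem.

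For the direction ``critical point implies elastic'', I would apply the Lagrange multiplier rule on \(\cM_g\). A constrained critical point of \(\pzcL\) satisfies \(d\pzcL+\langle d\pzcV\mid\vec\lambda_1\rangle+\langle d\pzcA\mid\vec\lambda_2\rangle=0\) on \(T_\gamma\cM_g\) for some admissible multipliers. This requires the differentials of the finitely many constraint functionals to be linearly independent at \(\gamma\). In the degenerate case, a nontrivial combination of constraint differentials vanishes, and I would absorb this by allowing an abnormal multiplier (zero coefficient on \(\pzcL\)), or treat it separately as a degenerate case. The relevant differentials are the first variations computed in the proof of \thmref{thm:Isoperimetric}. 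Since there are only finitely many constraints, the standard finite-codimension multiplier theorem applies. Condition (i) of \thmref{thm:Isoperimetric} then holds, and the ``moreover'' clause of that theorem gives that \(\gamma\) is elastic.

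For the converse, if \(\gamma\) is elastic, the ``moreover'' clause of \thmref{thm:Isoperimetric} supplies admissible \((\vec\lambda_1,\vec\lambda_2)\) for which (i) holds. Then \(d\pzcL\) vanishes on every \(\mathring\gamma\in T_\gamma\cM_g\) tangent to the level set of the constraints, since on such variations \(d\pzcA\) and \(d\pzcV\) pair to zero with the admissible multipliers. Hence \(\gamma\) is a critical point of \(\pzcL\) at fixed well-defined components.

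The main obstacle is the regularity, or linear-independence, hypothesis in the multiplier step. Degenerate cases include straight lines or circles, where some constraint differentials are dependent. I would try first to read the statement, as in the smooth literature, with the usual convention that constrained critical points are defined via multipliers. Failing that, I would verify the abnormal cases directly: a vanishing combination \(\langle d\pzcV\mid\vec\lambda_1\rangle+\langle d\pzcA\mid\vec\lambda_2\rangle=0\) forces \(\vec\lambda_1\times\vec\gamma+\vec\lambda_2\parallel\vec\gamma'\) after integration. This makes \(\gamma\) a helix or line, which is elastic anyway.
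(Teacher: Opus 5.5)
Your proposal is correct and follows the same route as the paper. The paper observes that constrained criticality is equivalent to stationarity of \(\pzcL+\langle\pzcV\mid\vec\lambda_1\rangle+\langle\pzcA\mid\vec\lambda_2\rangle\), with multipliers ranging over exactly the well-defined components, and then invokes \thmref{thm:Isoperimetric}. Your extra treatment of the degenerate (abnormal-multiplier) case fills in a point the paper's one-line proof passes over silently: a vanishing admissible combination of \(d\pzcV\) and \(d\pzcA\) forces \(\vec\gamma'\parallel\vec\lambda_1\times\vec\gamma+\vec\lambda_2\), so \(\gamma\) is a helix, circle, or line and is elastic anyway.
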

\begin{proof}
Constrained criticality is equivalent to stationarity of \(\pzcL\) plus a linear combination of the well-defined area--volume components, so the claim follows from \thmref{thm:Isoperimetric}.
\end{proof}

While~\corref{thm:IsoperimetricCharacterization} was first observed by~\citet[Cor. 2]{Chern:2020:CHF}, we provide a proof of \thmref{thm:Isoperimetric} (and thus~\corref{thm:IsoperimetricCharacterization}) for the quasi-periodic case and using only elementary techniques from the calculus of variations.

\section{Geometric Mechanics of Elastic Curves}
\label{sec:GeomMech}

The isoperimetric characterization of \secref{sec:Isoperimetric} has a deeper
geometric explanation: the area and volume vectors are not arbitrary integral
constraints, but the \emph{momentum map} of a natural symmetry group action on the
space of curves. Making this precise requires a brief review of the relevant \emph{geometric mechanics}. For a more in-depth overview, we refer to~\cite{Marsden:1999:IMS}. We develop the theory directly for quasi-periodic curves
\(\gamma\in\cM_g\), with the classical closed curve case \(g=\id\) a special case.

\subsection{Symmetries of \(\cM_g\)}
\label{sec:LieGroups}
A \emph{Lie group} is a group that is simultaneously a smooth manifold, with
group multiplication and inversion smooth maps. The tangent space at the
identity of any Lie group \(G\) is its Lie algebra \(\mathfrak{g}=T_{\mathrm{id}}G\),
which encodes the infinitesimal structure of the group. When a Lie group acts
on a configuration space and the Lagrangian is invariant under this action,
the group constitutes a symmetry of the variational problem, and Noether's
theorem associates a conserved quantity to each such symmetry.

\subsubsection{Adjoint and Coadjoint actions}
Every Lie group \(G\) acts on its Lie algebra \(\mathfrak{g}\) by the
\emph{adjoint action}
\begin{equation*}
    \mathrm{Ad}_g\colon\mathfrak{g}\to\mathfrak{g}, \qquad
    \mathrm{Ad}_g\,\xi = \left.\tfrac{d}{dt}\right|_{t=0} g\,h(t)\,g^{-1},
\end{equation*}
where \(t\mapsto h(t)\in G\) is any curve with \(h(0)=\mathrm{id}\) and \(h'(0)=\xi\). It describes how infinitesimal rigid body motions transform under conjugation by a finite
group element. Dualizing this action gives the \emph{coadjoint action}
\begin{equation*}
    \mathrm{Ad}^*_g\colon\mathfrak{g}^*\to\mathfrak{g}^*, \qquad g\in G,
\end{equation*}
defined by \(\langle\mathrm{Ad}^*_g\eta\mid x\rangle =
\langle\eta\mid\mathrm{Ad}_{g^{-1}}x\rangle\) for \(x\in\mathfrak{g}\) and
\(\eta\in\mathfrak{g}^*\), where \(\langle\cdot\mid\cdot\rangle\colon
\mathfrak{g}^*\times\mathfrak{g}\to\mathbb{R}\) is the natural pairing.
Intuitively, \(\mathrm{Ad}^*_g\) describes how \emph{momenta}---physical quantities dual to
infinitesimal rigid body motions---transform under a finite rigid motion \(g\). 

\subsubsection{The special Euclidean group}
\label{sec:SpecialEuclideanGroup}
For our purposes, the \emph{special Euclidean group} \(\mathrm{SE}(3)\) of
orientation-preserving rigid motions of \(\mathbb{R}^3\) is the primary
example. It is a six-dimensional Lie group whose Lie algebra
\(\mathfrak{se}(3)\) consists of infinitesimal rigid body motions, identified
with pairs \((\vec \omega,\vec v)\in\mathbb{R}^3\times\mathbb{R}^3\), where \(\vec \omega\in\RR^3\) is an infinitesimal rotation and \(\vec v\in\RR^3\) an infinitesimal translation. Elements \((\vec l,\vec p)\in\RR^3\times\RR^3\cong\mathfrak{se}(3)^*\) of the dual space canonically pair with elements \((\vec \omega,\vec v)\in\mathbb{R}^3\times\mathbb{R}^3\cong \mathfrak{se}(3)\) by
\begin{equation*}
    \langle(\vec l,\vec p)\mid(\vec \omega,\vec v)\rangle
    = \langle \vec l\mid\vec \omega\rangle+\langle \vec p\mid \vec v\rangle.
\end{equation*}
The coadjoint action of \(\mathrm{SE}(3)\) on its dual Lie algebra
\(\mathfrak{se}(3)^*\) is given by
\begin{equation}
\label{eq:CoadjointAction}
    \mathrm{Ad}^*_{(R,\vec b)}(\vec l,\vec p) = (R\vec l+\vec b\times R\vec p,\,R\vec p),
\end{equation}
where \((R,\vec b)\in\mathrm{SE}(3)\) and \((\vec l,\vec p)\in\mathfrak{se}(3)^*\). 

The Lie algebra of the centralizer subgroups \(Z(g)\) of elements \(g\in\SE(3)\) as in \secref{sec:MonodromyCases} is denoted by \(\fz(g)\) and given by the corresponding subspaces
\begin{enumerate}[(i)]
	\item If \(g=\mathrm{id}\), then \(\fz(g)=\se(3)\). 
	\item If \(R=\Id, \vec b\neq 0\),
	\begin{equation*}
        \fz(g)
        =
        \{(\vec \omega,\vec v)\mid
        \vec \omega,\vec v\in\RR^3, \  \vec\omega\parallel \vec s_g\}.
        \end{equation*}
    \item If \(g=(R,\vec b)\) with \(R\neq \Id\),
    \begin{equation*}
        \fz(g)
        =
        \{(\vec \omega,\vec v)\mid
        \vec \omega,\vec v\in\RR^3, \  \vec\omega, \vec v\parallel \vec s_g\}.
        \end{equation*}
\end{enumerate}

\subsubsection{Covariance of area and volume under \(\mathrm{SE}(3)\)}
Although the components identified above are independent of the chosen
fundamental domain, they are not invariant under arbitrary rigid motions of the curve in the ambient space. Instead, area and volume transform covariantly with the placement of the curve and with the chosen screw-axis gauge. 
\begin{theorem}
\label{thm:AreaVolumeTransformation}
Let \(\gamma\in\cM_g\) with monodromy \(g=(R,\vec b)\in\SE(3)\), and let \(h=(Q,\vec c)\in Z(g)\) be a symmetry of \(\cM_g\). Then:
\begin{enumerate}[(i)]
    \item \label{thm:AreaVolumeTransformationCase1} If \(g=\mathrm{id}\), any \(h\in\mathrm{SE}(3)\) is allowed and
    \begin{align*}
        \label{eq:TransformationLaw}
        \pzcA(h\circ\gamma) &= Q\pzcA(\gamma), \\
        \pzcV(h\circ\gamma) &= Q\pzcV(\gamma) + \vec c\times Q\pzcA(\gamma).
    \end{align*}
    
    \item \label{thm:AreaVolumeTransformationCase2} If \(R=\Id\) and \(\vec b\neq 0\), then
    \(\vec s_g=\vec b/|\vec b|\), and \(h=(Q,\vec c)\in Z(g)\) has \(Q\vec s_g=\vec s_g\) with \(\vec c\in\RR^3\) arbitrary. Writing \(\vec c^\perp=\vec c-\langle\vec s_g,\vec c\rangle\vec s_g\),
    \begin{align*}
        \pzcA(h\circ\gamma)
        &=
        Q\pzcA(\gamma)+\vec c\times\vec b, \\
        \left\langle \vec s_g,\pzcV(h\circ\gamma)\right\rangle
        &=
        \left\langle \vec s_g,Q\pzcV(\gamma)+\vec c\times Q\pzcA(\gamma)\right\rangle
        -\tfrac12|\vec b|\,|\vec c^\perp|^2.
    \end{align*}
    When the translation preserves the axis \(\vec s_g\) (\ie, \(\vec c\parallel\vec s_g\)) the two additional terms vanish.

    \item If \(R\neq \Id\), \(h\) must be a screw motion about the axis 
    \(\mathbb{R}\vec s_g\), and the axial projections transform as
    \begin{align*}
        \langle\vec s_g,\pzcA(h\circ\gamma)\rangle &=
        \langle\vec s_g, Q\pzcA(\gamma)\rangle, \\
        \langle\vec s_g,\pzcV(h\circ\gamma)\rangle &=
        \langle\vec s_g, Q\pzcV(\gamma) + \vec c\times Q\pzcA(\gamma)\rangle.
    \end{align*}
\end{enumerate}
\end{theorem}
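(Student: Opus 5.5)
The plan is a direct computation. Write \(\tilde\gamma = h\circ\gamma\), so \(\vec{\tilde\gamma}=Q\vec\gamma+\vec c\) and \(\vec{\tilde\gamma}'=Q\vec\gamma'\). Since \(h\in Z(g)\), \(\tilde\gamma\in\cM_g\) with the same monodromy \((R,\vec b)\). We may therefore evaluate both \(\pzcA(\tilde\gamma)\) and \(\pzcV(\tilde\gamma)\) on the same fundamental domain \(I=[t_0,\tau(t_0)]\) using \teqref{eq:AreaVectorCompleted} and \teqref{eq:VolumeVectorCompleted}. By Lemmas~\ref{thm:AreaFundamentalDomainIndependent} and~\ref{thm:VolumeFundamentalDomainIndependent}, the components we compare are independent of that choice. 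We use two facts throughout: \(Q(\vec x\times\vec y)=Q\vec x\times Q\vec y\), and the endpoint relation \(\vec\gamma(\tau(t_0))=R\vec\gamma(t_0)+\vec b\).

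\textbf{Area.} Expand
\[
\tfrac12\int_I(Q\vec\gamma+\vec c)\times Q\vec\gamma'\,ds
= Q\,\tfrac12\int_I\vec\gamma\times\vec\gamma'\,ds+\tfrac12\vec c\times Q\bigl(\vec\gamma(\tau(t_0))-\vec\gamma(t_0)\bigr).
\]
The boundary term is \(-\tfrac12\vec b\times(Q\vec\gamma(t_0)+\vec c)\).

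In case (i), \(\vec b=0\) and the integral term reduces to \(\tfrac12\vec c\times 0\). This gives \(Q\pzcA\).

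In case (ii), \(R=\Id\) and \(Q\vec b=\vec b\), so the integral contributes \(\tfrac12\vec c\times\vec b\). The boundary term is \(-\tfrac12 Q\vec b\times Q\vec\gamma(t_0)-\tfrac12\vec b\times\vec c\). Together these give \(Q\pzcA+\vec c\times\vec b\).

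In case (iii), \(\vec c\parallel\vec s_g\), so after projecting onto \(\vec s_g\) every term containing \(\vec c\times\cdot\) drops out. The boundary term also cooperates: \(\vec b=\ell\vec s_g=Q\vec b\), hence \(\vec b\times Q\vec\gamma(t_0)=Q(\vec b\times\vec\gamma(t_0))\).

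\textbf{Volume.} Expand \((Q\vec\gamma+\vec c)\times((Q\vec\gamma+\vec c)\times Q\vec\gamma')\) into four pieces: \(Q[\vec\gamma\times(\vec\gamma\times\vec\gamma')]\), \(\vec c\times(Q\vec\gamma\times Q\vec\gamma')\), \(Q\vec\gamma\times(\vec c\times Q\vec\gamma')\), and \(\vec c\times(\vec c\times Q\vec\gamma')\).

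The key identity handles the two mixed terms. Their sum equals \(\tfrac32\vec c\times(Q\vec\gamma\times Q\vec\gamma')\) plus an exact derivative. Using Jacobi, \(\vec x\times(\vec c\times\vec x')=\tfrac12\vec c\times(\vec x\times\vec x')+\tfrac12\tfrac{d}{ds}[\vec x\times(\vec c\times\vec x)]\)-type relations, up to sign conventions that I will verify. After the factor \(\tfrac13\), the mixed terms then produce \(\vec c\times Q(\tfrac12\int\vec\gamma\times\vec\gamma')\) plus endpoint terms. The last piece integrates exactly to \(\tfrac13\vec c\times(\vec c\times Q\,\Delta\vec\gamma)\), where \(\Delta\vec\gamma=\vec\gamma(\tau(t_0))-\vec\gamma(t_0)\).

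Next, add the transformed completion \(-\tfrac16\tilde\gamma(t_0)\times(\vec b\times\tilde\gamma(t_0))\) and expand it similarly. The claim is that all endpoint contributions combine to give \(\vec c\times Q\) applied to the area completion \(-\tfrac12\vec b\times\vec\gamma(t_0)\), plus a remainder. That yields \(Q\pzcV+\vec c\times Q\pzcA\) modulo the remainder.

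In case (i), \(\Delta\vec\gamma=0\) and \(\vec b=0\), so there is no remainder.

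In case (iii), we project onto \(\vec s_g\) and use \(\vec c,\vec b\parallel\vec s_g\). Every remainder term has the form \(\langle\vec s_g,\vec c\times\cdot\rangle\) or \(\langle\vec s_g,\vec b\times\cdot\rangle\) and so vanishes. The only care needed is that \(\Delta\vec\gamma=(R-\Id)\vec\gamma(t_0)+\vec b\) is not parallel to the axis.

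In case (ii), \(\Delta\vec\gamma=\vec b\). The projected remainder reduces to \(\langle\vec s_g,\tfrac13\vec c\times(\vec c\times\vec b)-\tfrac16\vec c\times(\vec b\times\vec c)\rangle\) together with cross terms that cancel. Since \(\langle\vec s_g,\vec c\times(\vec c\times\vec b)\rangle=-|\vec b||\vec c^\perp|^2\), this should equal \(-\tfrac12|\vec b||\vec c^\perp|^2\).

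\textbf{Main obstacle.} The hard part is the bookkeeping of endpoint terms in the volume computation, especially the exact-derivative identity for the mixed terms and the coefficient in case (ii). I would first check these coefficients on the straight line \(\vec\gamma(s)=\vec b s\), which has \(\pzcA=\pzcV=0\) before the motion. Only after that check would I carry out the general cancellation.
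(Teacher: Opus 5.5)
Your route differs from the paper's, and your coefficients are right, but one step in case (iii) is misjustified; it takes a line to fix.

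\textbf{Comparison of routes.} The paper factors \(h=(\Id,\vec c)\circ(Q,0)\) and disposes of the rotation at once using \(Q\vec b=\vec b\), so only translations \(\gamma\mapsto\gamma+\vec c\) with \(R\vec c=\vec c\) need work. For the volume in cases (ii) and (iii) it never expands the full vector. It uses the axial closed form \(\langle\vec s_g,\pzcV\rangle=-\tfrac12\int_I|\vec\gamma^\perp|^2\langle\vec s_g,\vec\gamma'\rangle\,ds\) of \lemref{lem:AxialClosedForms}, whose integrand is \(\tau\)-periodic. Case (iii) is then immediate because \(\vec c^\perp=0\), and case (ii) needs one product-rule/Lagrange-identity step.

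Your single expansion treats all three cases uniformly without that lemma, and I checked it. The identity \(\vec x\times(\vec c\times\vec x')=\tfrac12\vec c\times(\vec x\times\vec x')+\tfrac12\tfrac{d}{ds}\bigl[\vec x\times(\vec c\times\vec x)\bigr]\) holds exactly. In case (ii) the cross terms do cancel after projection, and the remainder is \(\tfrac12\langle\vec s_g,\vec c\times(\vec c\times\vec b)\rangle=-\tfrac12|\vec b|\,|\vec c^\perp|^2\), as you claim. The price of your route is endpoint bookkeeping, and that is where the slip occurs.

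\textbf{The case (iii) step.} It is not true that every remainder term has the form \(\langle\vec s_g,\vec c\times\cdot\rangle\) or \(\langle\vec s_g,\vec b\times\cdot\rangle\). The exact-derivative part of your mixed-term identity leaves the endpoint term \(\tfrac16\bigl[\vec x\times(\vec c\times\vec x)\bigr]_{t_0}^{\tau(t_0)}\) with \(\vec x=Q\vec\gamma\). For \(\vec c=\kappa\vec s_g\), its axial projection at each endpoint is \(\kappa|\vec x^\perp|^2\), which is generally nonzero.

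This term vanishes only as a difference. Since \((Q,0)\in Z(g)\), the curve \(Q\circ\gamma\) lies in \(\cM_g\), so \(\vec x(\tau(t_0))=R\vec x(t_0)+\vec b\). Because \(R\) rotates about \(\vec s_g\) and \(\vec b\parallel\vec s_g\), \(|\vec x^\perp|\) takes the same value at both endpoints. This is exactly the periodicity of \(|\vec\gamma^\perp|^2\) behind the paper's closed form.

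Your stated caveat, that \(\Delta\vec\gamma\) is not axial, is beside the point. When \(\vec c\parallel\vec s_g\), \(\langle\vec s_g,\vec c\times\vec v\rangle=0\) for every \(\vec v\), so that term vanishes regardless. With the fix above, your argument is complete.
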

\begin{proof}
    See~\appref{sec:AreaVolumeTransformationProof}.
\end{proof}

These transformation laws are
the geometric heart of the paper and drive both the continuous and discrete
theories that follow. In~\secref{sec:MomentumMap} we identify the restriction of
\((\pzcV,\pzcA)\) to the compatible infinitesimal rigid motions as
\emph{momenta}, \ie, elements of \(\mathfrak z(g)^*\) transforming under
\(Z(g)\), which explains why fixing them is the natural constraint for elastic
curves.

\subsection{\(\cM_g\) as a (pre-)symplectic manifold}
\label{sec:MWForm}
In geometric mechanics, a \emph{pre-symplectic structure} on a manifold is a closed 2-form. It turns out that the space \(\cM_g\) naturally carries such a structure. The \emph{Marsden--Weinstein form} is given by 
\begin{equation}
\label{eq:MWForm}
    %
    \OmegaMW_\gamma(\dot\gamma,\mathring\gamma)
    =
    \int_I
    \det\bigl(\dot{\vec\gamma},\mathring{\vec\gamma},d\vec\gamma\bigr),
\end{equation}
for all \(\dot\gamma,\mathring\gamma\in T_\gamma\cM_g\)~\cite{Marsden:1983:COV}. 
\begin{theorem}
\label{thm:MWSymplectic}
    The Marsden--Weinstein form \(\OmegaMW\) is a pre-symplectic form on \(\cM_g\), \ie, a closed \(2\)-form. Moreover, its kernel consists precisely of the reparametrization directions. 
\end{theorem}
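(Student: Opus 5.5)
The plan is to check three things in turn: that \(\OmegaMW\) is well defined on \(\cM_g\), that it is closed, and that its kernel is exactly the reparametrization directions.

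\emph{Well-definedness.} Write the integrand as \(\det(\dot{\vec\gamma},\mathring{\vec\gamma},\vec\gamma')\,dt\). For \(\dot\gamma,\mathring\gamma\in T_\gamma\cM_g\) we have \(\dot{\vec\gamma}(\tau(t))=R\dot{\vec\gamma}(t)\), \(\mathring{\vec\gamma}(\tau(t))=R\mathring{\vec\gamma}(t)\), and \(\vec\gamma'(\tau(t))=R\vec\gamma'(t)\), using \(\tau'=1\) since \(\tau\) is a translation. Because \(\det R=1\), the integrand is \(\tau\)-periodic. Hence the integral over \(I=[t_0,\tau(t_0)]\) does not depend on \(t_0\). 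It is clearly bilinear and antisymmetric, so it defines a \(2\)-form.

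\emph{Closedness.} I would exhibit a primitive. Define the \(1\)-form \(\ThetaMW_\gamma(\mathring\gamma)=\tfrac13\int_I\det(\vec\gamma,\mathring{\vec\gamma},\vec\gamma')\,dt\), which is the variation-side analogue of the volume integrand. Computing \(d\ThetaMW\) with constant coordinate variations gives
\[
\dot\gamma\,\ThetaMW(\mathring\gamma)-\mathring\gamma\,\ThetaMW(\dot\gamma).
\]
After integrating by parts the \(\dot{\vec\gamma}'\) and \(\mathring{\vec\gamma}'\) terms, this reduces to \(\OmegaMW(\dot\gamma,\mathring\gamma)\) plus a boundary term at \(t_0\) and \(\tau(t_0)\).

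For \(g\neq\id\) the boundary term does not cancel by periodicity, because \(\vec\gamma(\tau(t_0))=R\vec\gamma(t_0)+\vec b\) and the \(\vec b\) part survives. This is the step I expect to be the main obstacle. I would first try to absorb the leftover into \(\ThetaMW\) with an endpoint correction, mirroring the completion term \(-\tfrac16\vec\gamma(t_0)\times(\vec b\times\vec\gamma(t_0))\) in \teqref{eq:VolumeVectorCompleted}. If that becomes messy, I would fall back on a direct argument. On the affine/linear space of maps \(\vec\gamma\) with the given quasi-periodicity, \(\OmegaMW\) is a trilinear expression, linear in \(\vec\gamma\) through \(\vec\gamma'\). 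Its exterior derivative in the coordinate vector fields is therefore the cyclic sum
\[
\sum_{\mathrm{cyc}}\int_I\det\bigl(\dot{\vec\gamma},\mathring{\vec\gamma},\bar{\vec\gamma}'\bigr)\,dt ,
\]
and this cyclic sum equals \(\int_I\frac{d}{dt}\det(\dot{\vec\gamma},\mathring{\vec\gamma},\bar{\vec\gamma})\,dt\). The tangent vectors transform only by \(R\), with no \(\vec b\), and \(\det R=1\), so the right-hand side is the difference of a \(\tau\)-periodic function at the two ends of \(I\), which is \(0\). I expect this second route to be cleaner, and I would use it as the main proof.

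\emph{Kernel.} Suppose \(\dot\gamma\in\ker\OmegaMW_\gamma\). Then \(\int_I\langle\mathring{\vec\gamma},\vec\gamma'\times\dot{\vec\gamma}\rangle\,dt=0\) for all \(R\)-equivariant \(\mathring{\vec\gamma}\). The field \(\vec\gamma'\times\dot{\vec\gamma}\) is itself \(R\)-equivariant, so we may take \(\mathring{\vec\gamma}=\vec\gamma'\times\dot{\vec\gamma}\). Alternatively, take \(\mathring{\vec\gamma}\) supported in the interior of \(I\) and extend it equivariantly. Either way we get \(\vec\gamma'\times\dot{\vec\gamma}\equiv0\), and since \(\gamma\) is an immersion this gives \(\dot{\vec\gamma}=\varphi\,\vec\gamma'\). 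The function \(\varphi\) is smooth and \(\tau\)-periodic, because \(\dot{\vec\gamma}\) and \(\vec\gamma'\) transform the same way; this is exactly an infinitesimal reparametrization compatible with \(\cM_g\). Conversely, if \(\dot{\vec\gamma}=\varphi\vec\gamma'\), the determinant has two parallel columns and vanishes pointwise, so such directions lie in the kernel.
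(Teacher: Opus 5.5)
Your proof is correct, but your closedness argument takes a different route from the paper's.

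\textbf{Your route.} You compute \(d\OmegaMW\) directly. Since \(\cM_g\) is open in an affine space modeled on \(T_\gamma\cM_g\), and \(\OmegaMW_\gamma\) depends on \(\gamma\) only linearly through \(\vec\gamma'\), the exterior derivative on constant fields is the cyclic sum. That sum equals \(\int_I\tfrac{d}{dt}\det(\dot{\vec\gamma},\mathring{\vec\gamma},\bar{\vec\gamma})\,dt\). It vanishes because tangent vectors pick up only \(R\), never \(\vec b\), and \(\det R=1\).

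\textbf{The paper's route.} The paper proves the stronger fact that \(\OmegaMW\) is exact: \(\OmegaMW=d\ThetaMW\) with \(\ThetaMW_\gamma(\mathring\gamma)=\int_I\beta(\mathring{\vec\gamma},d\vec\gamma)\). Here \(\beta\) is a primitive of the volume form satisfying \(g^*\beta=\beta\), which makes the seam terms from integration by parts cancel. You hit an obstacle with \(\beta=\tfrac13\det(\vec x,\cdot,\cdot)\). The paper resolves it not by an endpoint correction but by switching to the axis-projected primitive \(\tfrac12\det(\vec x^\perp,\cdot,\cdot)\), with \(\vec x^\perp=\vec x-\langle\vec x,\vec s_g\rangle\vec s_g\). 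This primitive is invariant under screw motions about \(\RR\vec s_g\).

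Your correction idea would also have worked. The leftover seam term is a constant-coefficient \(2\)-form in \((\dot{\vec\gamma}(t_0),\mathring{\vec\gamma}(t_0))\), and such a form is itself exact.

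\textbf{Comparison.} The paper's route gives an explicit Liouville potential and explains why the monodromy constrains the choice of \(\beta\). Yours is shorter and needs no auxiliary choice. It yields exactly the closedness the theorem asserts, but not exactness.

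\textbf{Kernel.} Your kernel argument is the same as the paper's. Testing against the equivariant field \(\mathring{\vec\gamma}=\vec\gamma'\times\dot{\vec\gamma}\) is a useful addition: it properly justifies the step \(\vec\gamma'\times\dot{\vec\gamma}\equiv0\), which the paper states somewhat loosely.
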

\begin{proof}
    See~\appref{sec:MWLWellDefinedProof}.
\end{proof}

\begin{wrapfigure}[10]{r}{0.425\columnwidth}
    \vspace{-.8em}
  {\hspace{-2.em}
    \includegraphics[width=0.475\columnwidth]{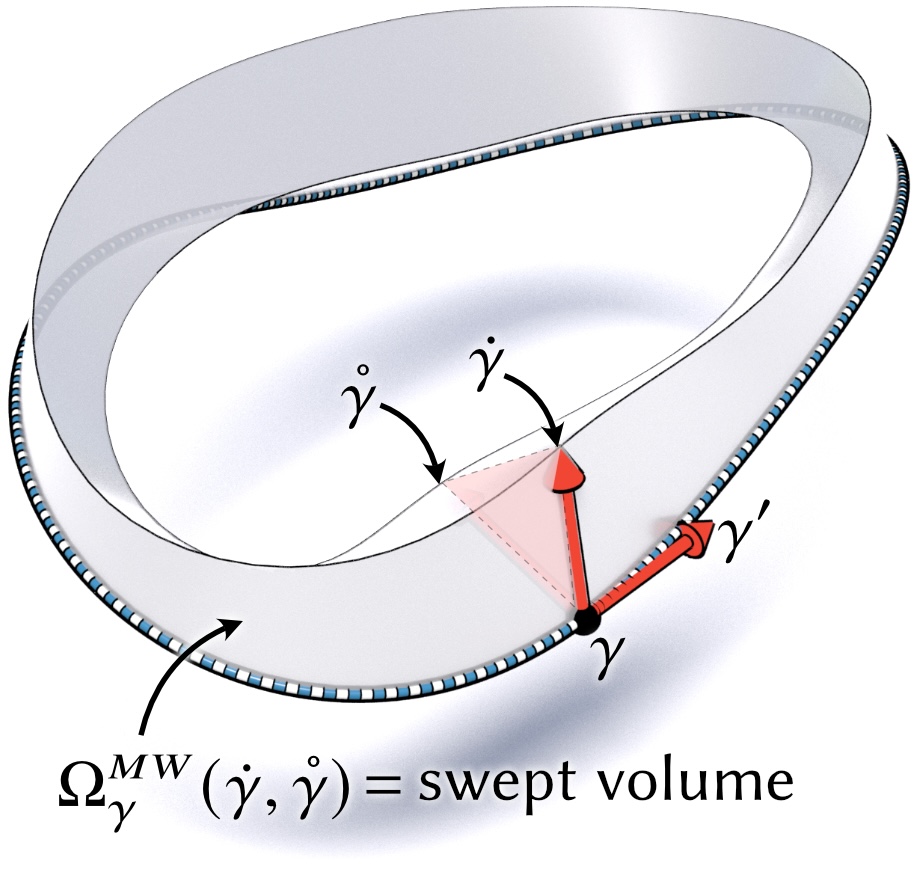}
  }  
\end{wrapfigure}
Geometrically, the Marsden--Weinstein form measures the signed volume swept along \(\gamma\) by the infinitesimal oriented area element spanned by \(\dot\gamma\) and \(\mathring\gamma\).

This geometric interpretation also explains the degeneracy. A reparametrization direction is collinear with the tangent \(\gamma'=d\gamma(\tfrac{\partial}{\partial s})\), the
direction along which the prism is swept, so the prism collapses and has no volume. Thus, after quotienting by reparametrizations, the Marsden--Weinstein form descends to a closed, non-degenerate \(2\)-form, \ie, a genuine
\emph{symplectic form}. 

 The action of the centralizer \(Z(g)\) preserves the Marsden--Weinstein form on \(\cM_g\), \ie, this construction is compatible with the symmetries of \(\cM_g\).
\begin{lemma}
\label{thm:CentralizerSymplectomorphism}
    The centralizer \(Z(g)\) preserves the pre-symplectic form
    \(\OmegaMW\) on \(\cM_g\). Consequently, the induced action of \(Z(g)\)
    on the quotient of \(\cM_g\) by reparametrizations is symplectic.
\end{lemma}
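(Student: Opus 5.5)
The plan is to verify directly that for every \(h=(Q,\vec c)\in Z(g)\) the map \(\Psi_h\colon\cM_g\to\cM_g\), \(\gamma\mapsto h\circ\gamma\), satisfies \(\Psi_h^*\OmegaMW=\OmegaMW\), and then to pass this invariance to the quotient by reparametrizations. As a preliminary step, \(\Psi_h\) is well defined on \(\cM_g\). Indeed, we showed in \secref{sec:MonodromyCases} that \(h\circ\gamma\in\cM_{hgh^{-1}}=\cM_g\) for \(h\in Z(g)\). Since \(\Psi_h\) is affine in \(\gamma\), its differential is the linear map \(d\Psi_h(\mathring\gamma)=Q\mathring{\vec\gamma}\). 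This is compatible with the quasi-periodicity of variations: if \(\mathring{\vec\gamma}(\tau(t))=R\mathring{\vec\gamma}(t)\), then \(Q\mathring{\vec\gamma}(\tau(t))=QR\mathring{\vec\gamma}(t)=R\,Q\mathring{\vec\gamma}(t)\), because \(hg=gh\) forces \(QR=RQ\) on the rotational parts.

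The core computation is then pointwise. For \(\dot\gamma,\mathring\gamma\in T_\gamma\cM_g\) we have \(d(h\circ\vec\gamma)=Q\,d\vec\gamma\), since the translation \(\vec c\) drops out under differentiation. Hence
\[
(\Psi_h^*\OmegaMW)_\gamma(\dot\gamma,\mathring\gamma)=\int_I\det\bigl(Q\dot{\vec\gamma},Q\mathring{\vec\gamma},Q\,d\vec\gamma\bigr)=\det(Q)\int_I\det\bigl(\dot{\vec\gamma},\mathring{\vec\gamma},d\vec\gamma\bigr)=\OmegaMW_\gamma(\dot\gamma,\mathring\gamma),
\]
using \(\det Q=1\) because \(Q\in\SO(3)\). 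Here the orientation-preserving nature of \(Z(g)\subset\SE(3)\) is essential; a reflection would reverse the sign.

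I expect the main point requiring care to be that \(\OmegaMW\) is an integral over a fundamental domain \(I=[t_0,\tau(t_0)]\), so both sides should be independent of the choice of \(t_0\). I would invoke \thmref{thm:MWSymplectic}, which asserts that \(\OmegaMW\) is a well-defined closed 2-form on \(\cM_g\). Alternatively, one can note directly that the integrand \(\det(\dot{\vec\gamma},\mathring{\vec\gamma},\vec\gamma')\,dt\) is \(\tau\)-periodic: shifting by \(\tau\) multiplies all three vectors by \(R\), which has determinant \(1\). The integral therefore does not depend on \(t_0\), and the same fundamental domain can be used on both sides of the computation above.

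For the consequence, recall from \thmref{thm:MWSymplectic} that \(\ker\OmegaMW\) consists exactly of the reparametrization directions \(\mathring{\vec\gamma}=f\vec\gamma'\). The action \(\Psi_h\) commutes with reparametrization, since \(h\circ(\gamma\circ\varphi)=(h\circ\gamma)\circ\varphi\). It therefore descends to a map on the quotient of \(\cM_g\) by reparametrizations. Moreover, \(d\Psi_h\) maps kernel directions to kernel directions, because \(Q(f\vec\gamma')=f(h\circ\gamma)'\). The reduced symplectic form is defined by \(\pi^*\omega=\OmegaMW\), where \(\pi\) is the quotient projection. Then \(\pi\circ\Psi_h=\bar\Psi_h\circ\pi\) together with \(\Psi_h^*\OmegaMW=\OmegaMW\) gives \(\pi^*\bar\Psi_h^*\omega=\pi^*\omega\). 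Since \(\pi\) is a submersion, \(\pi^*\) is injective, so \(\bar\Psi_h^*\omega=\omega\), and the induced action is symplectic.
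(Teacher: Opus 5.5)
Your proposal is correct and follows the paper's proof essentially step by step: \(Z(g)\) maps \(\cM_g\) to itself, the differential is \(\mathring{\vec\gamma}\mapsto Q\mathring{\vec\gamma}\), invariance follows from \(\det(Q\cdot,Q\cdot,Q\cdot)=\det(\cdot,\cdot,\cdot)\) for \(Q\in\SO(3)\), and the action descends to the quotient because rigid motions commute with reparametrizations. Your extra checks only make explicit details the paper leaves implicit: that \(Q\mathring{\vec\gamma}\) stays quasi-periodic via \(QR=RQ\), that the integrand is \(\tau\)-periodic, and that \(\pi^*\) is injective.
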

\begin{proof}
    See~\appref{sec:CentralizerSymplectomorphismProof}.
\end{proof}

\subsection{The Momentum Map and Noether charges}
\label{sec:MomentumMap}
Whenever a Lie group \(G\) acts on a symplectic manifold \((M,\Omega)\)
by symplectomorphisms, one seeks a \emph{momentum map}, \ie, a map
\begin{equation*}
    \mu\colon M\to\fg^*
\end{equation*}
satisfying, for every \(\xi\in\fg\),
\begin{equation}
\label{eq:MomentumMapDefiningEq}
    \iota_{X_\xi}\Omega = d\langle\mu\,\vert\, \xi\rangle ,
\end{equation}
where \(X_\xi\) is the vector field on \(M\) generated by \(\xi\). Thus each component \(\langle\mu\,\vert\,\xi\rangle\) records the conserved quantity associated with the infinitesimal symmetry \(\xi\), and the momentum map packages these \emph{Noether charges} into a single \(\fg^*\)-valued function. The same definition applies to the pre-symplectic form on \(\cM_g\), before passing to the quotient by reparametrizations. 

The purpose of this section is to identify the area-volume data of \thmref{thm:Isoperimetric} as momenta. Through this identification, the constraints in the isoperimetric problem are no longer external choices: they are the Noether charges of the rigid-motion symmetries that remain compatible with the monodromy.

\subsubsection{The momentum map}
The relevant symmetry group of \(\cM_g\) is the centralizer \(Z(g)\),
whose Lie algebra \(\fz(g)\subset\se(3)\) consists of the infinitesimal
rigid motions compatible with the monodromy. Hence the momentum map
records only the components of the area-volume pair visible to
\(\fz(g)\).

For \((\vec\omega,\vec v)\in\fz(g)\), the infinitesimal vector field on
\(\cM_g\) is
\begin{equation*}
    X_{(\vec\omega,\vec v)}(\gamma)=\vec \omega\times\vec \gamma+\vec v.
\end{equation*}
By \lemref{thm:CentralizerSymplectomorphism}, this action preserves
\(\OmegaMW\), and its momentum map is as follows~\cite{Marsden:1999:IMS}. 

\begin{theorem}
\label{thm:MomentumMap}
    Let \(g\in\SE(3)\), and let
    \(\iota_\fz\colon\mathfrak z(g)\hookrightarrow\se(3)\) denote the
    inclusion of the Lie algebra of its centralizer. The momentum map for
    the \(Z(g)\)-action on \((\cM_g,\OmegaMW)\) is
    \begin{equation*}
        \mu_g\colon \cM_g\to\mathfrak z(g)^*,\ \gamma\mapsto
        \iota_\fz^*
        \begin{pmatrix}
            \pzcV(\gamma) \\
            \pzcA(\gamma)
        \end{pmatrix}.
    \end{equation*}
\end{theorem}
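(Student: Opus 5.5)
The plan is to verify the defining equation \teqref{eq:MomentumMapDefiningEq} directly. For every \((\vec\omega,\vec v)\in\fz(g)\) and every variation \(\mathring\gamma\in T_\gamma\cM_g\) (so \(\mathring{\vec\gamma}(\tau(t))=R\mathring{\vec\gamma}(t)\)), I will show that
\begin{equation*}
    \OmegaMW_\gamma\bigl(X_{(\vec\omega,\vec v)}(\gamma),\mathring\gamma\bigr)
    =
    \tfrac{d}{d\epsilon}\big|_{\epsilon=0}\bigl(\langle\vec\omega,\pzcV(\gamma_\epsilon)\rangle+\langle\vec v,\pzcA(\gamma_\epsilon)\rangle\bigr),
\end{equation*}
where \(\gamma_\epsilon\) is a curve in \(\cM_g\) with \(\partial_\epsilon\gamma_\epsilon|_0=\mathring\gamma\). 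Up to a global sign convention on \(\iota_X\Omega\), which I fix by comparing with the closed case, this is exactly the claim. The pairing \(\iota_\fz^*\) only sees \(\langle\vec\omega,\pzcV\rangle+\langle\vec v,\pzcA\rangle\) for \((\vec\omega,\vec v)\in\fz(g)\). By Lemmas~\ref{thm:AreaFundamentalDomainIndependent} and~\ref{thm:VolumeFundamentalDomainIndependent}, these components are independent of the fundamental domain in all three monodromy cases of \secref{sec:MonodromyCases}. So I may fix \(I=[t_0,\tau(t_0)]\) and compute there.

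\textbf{Step 1 (left side).} Using \(\det(\vec a,\vec b,\vec c)=\langle\vec a\times\vec b,\vec c\rangle\), I write the left side as
\begin{equation*}
    \int_I\det\bigl(\vec\omega\times\vec\gamma+\vec v,\mathring{\vec\gamma},\vec\gamma'\bigr)\,ds
    =\int_I\bigl\langle\mathring{\vec\gamma},\vec\gamma'\times(\vec\omega\times\vec\gamma)\bigr\rangle
    +\bigl\langle\mathring{\vec\gamma},\vec\gamma'\times\vec v\bigr\rangle\,ds .
\end{equation*}

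\textbf{Step 2 (area term).} Varying \(\tfrac12\int_I\langle\vec v,\vec\gamma\times\vec\gamma'\rangle\) and integrating by parts gives \(\int_I\langle\mathring{\vec\gamma},\vec\gamma'\times\vec v\rangle\). This produces a boundary term \(\tfrac12\langle\vec v,\vec\gamma\times\mathring{\vec\gamma}\rangle\) evaluated from \(t_0\) to \(\tau(t_0)\). I will then check that it is cancelled by the variation of the completion term \(-\tfrac12\langle\vec v,\vec b\times\vec\gamma(t_0)\rangle\). For this I use \(\vec\gamma(\tau t_0)=R\vec\gamma(t_0)+\vec b\), \(\mathring{\vec\gamma}(\tau t_0)=R\mathring{\vec\gamma}(t_0)\), and the constraints defining \(\fz(g)\). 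When \(R\neq\Id\), \(\vec v\parallel\vec s_g=R\vec s_g\), so \(\langle\vec v,R\vec x\times R\vec y\rangle=\langle\vec v,\vec x\times\vec y\rangle\); the origin lies on the axis, so \(\vec b\parallel\vec s_g\). When \(R=\Id\), cancellation is immediate.

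\textbf{Step 3 (volume term).} In the same way, I vary \(\tfrac13\int_I\langle\vec\omega\times\vec\gamma,\vec\gamma\times\vec\gamma'\rangle\). Its integrand is cubic in \(\vec\gamma\), so the variation has two pieces: one with \(\mathring{\vec\gamma}\) undifferentiated and one with \(\mathring{\vec\gamma}'\). Integrating the latter by parts, I expect the interior terms to combine to \(\int_I\langle\mathring{\vec\gamma},\vec\gamma'\times(\vec\omega\times\vec\gamma)\rangle\). The coefficient \(\tfrac13\) becomes \(1\) exactly as in the flux computation of \secref{sec:VolumeVector}; this is the standard fact that the flux variation of the divergence-free field \(X_{\vec\omega}\) equals \(\int\langle X_{\vec\omega}\times\mathring{\vec\gamma},\vec\gamma'\rangle\).

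\textbf{Main obstacle.} The hardest step is the boundary bookkeeping in Step 3. The boundary term, of the form \(\tfrac13\langle\vec\gamma\times(\vec\omega\times\vec\gamma),\mathring{\vec\gamma}\rangle\) up to rearrangement, must cancel against the variation of the completion \(-\tfrac16\vec\gamma(t_0)\times(\vec b\times\vec\gamma(t_0))\). I plan to expand \(\vec\gamma(\tau t_0)=R\vec\gamma_0+\vec b\) and use rotation invariance of \(\vec\omega\) under \(R\) (since \(\vec\omega\parallel\vec s_g\) whenever \(g\neq\id\)). I expect the terms quadratic in \(\vec\gamma_0\) to cancel, leaving terms linear in \(\vec b\) that exactly match \(\delta\) of the completion. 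The \(\vec b\)-quadratic terms should vanish by \(\vec b\parallel\vec\omega\) or \(\vec b\parallel\vec s_g\). In the pure-translation case only \(\langle\vec s_g,\pzcV\rangle\) is used, since \(\vec\omega\parallel\vec b\), which is why the axial restriction is needed. If the bookkeeping becomes heavy, a fallback is to use that \(d\langle\mu|\xi\rangle\) is independent of \(t_0\) by the Lemmas. One can then choose \(\mathring\gamma\) supported away from \(t_0\) to get the interior identity, and treat general \(\mathring\gamma\) by a partition of unity together with the \(R\)-equivariant shift of the domain.
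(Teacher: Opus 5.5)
Your proposal is correct and takes essentially the same route as the paper. The paper writes \(\iota_{X_{(\vec\omega,\vec v)}}\OmegaMW_\gamma(\mathring\gamma)=\int_I\langle\vec\gamma'\times\vec v+(\vec\gamma\times\vec\omega)\times\vec\gamma',\mathring{\vec\gamma}\rangle\,ds\), identifies these integrands with the gradients of \lemref{lem:AreaVolumeGradients}, and cancels the seam terms by citing the completion computations of \lemref{lem:QuasiPeriodicFirstVariation} (whose multiplier conditions coincide with the defining conditions of \(\fz(g)\)); your Steps 2--3 simply redo that bookkeeping by hand, and with \(\iota_X\Omega=\Omega(X,\cdot)\) the signs already agree, so no sign adjustment against the closed case is needed.
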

\begin{proof}
    See~\appref{sec:MomentumMapProof}.
\end{proof}
\thmref{thm:MomentumMap} states that the momentum map on \(\cM_g\) is obtained from the full area-volume pair by restricting the covector \((\pzcV,\pzcA)\in\se(3)^*\) to \(\fz(g)\subset\se(3)\).
Thus the remaining Noether charges are:
\begin{enumerate}[(i)]
    \item For \(g=\id\),
    \begin{equation*}
        \mu_g(\gamma)
        =
        \begin{pmatrix}
            \pzcV(\gamma) \\
            \pzcA(\gamma)
        \end{pmatrix},
    \end{equation*}
    which recovers the known angular and linear momenta of the Marsden--Weinstein form on the space of closed space curves~(\cf~\cite{Shashikanth:2003:LVR}).

    \item For pure translational monodromy, \(R=\Id\) and \(\vec b\neq 0\),
    \begin{equation*}
        \mu_g(\gamma)
        =
        \begin{pmatrix}
            \langle \nicefrac{\vec b}{|\vec b|}, \pzcV(\gamma)\rangle \\
            \pzcA(\gamma)
        \end{pmatrix}.
    \end{equation*}

    \item For screw monodromy, \(R\neq \Id\),
    \begin{equation*}
        \mu_g(\gamma)
        =
        \begin{pmatrix}
            \langle \vec s_g, \pzcV(\gamma)\rangle \\
            \langle \vec s_g, \pzcA(\gamma)\rangle
        \end{pmatrix},
    \end{equation*}
    where \(\vec s_g\in S^2\) is chosen such that \(\RR\vec s_g\) is the screw axis.
\end{enumerate}

With the momentum map in hand, the otherwise unexpected area--volume
constraints cease to be mysterious. On the fixed-monodromy space
\(\cM_g\), the length \(\pzcL\) is invariant under the symmetry group \(Z(g)\), and \(\mu_g\) records the corresponding
Noether charges. Fixing the area--volume components selected by the
monodromy is therefore equivalent to fixing
\(\eta\in\fz(g)^*\). The admissible curves form the momentum level set
\begin{equation*}
    \mu_g^{-1}(\eta)
    =
    \{\gamma\in\cM_g\mid \mu_g(\gamma)=\eta\}.
\end{equation*}
Equivalently, the constrained variational problem is to find critical
points of
\begin{equation*}
    \pzcL|_{\mu_g^{-1}(\eta)}
    \colon
    \mu_g^{-1}(\eta)\to\RR.
\end{equation*}
Thus the monodromy selects the constraints by selecting the
rigid-motion symmetries that survive.
\section{Discretization}
\label{sec:Discretization}
We now transfer the continuous construction to discrete polygonal curves.
Computationally, this is the central simplification: once length,
area, and volume are defined on a polygon, a discrete elastica is
obtained by constrained optimization over its vertex positions. No
curvature model or curve-flow PDE must be discretized or integrated.
The guiding structural requirement is that the discrete area and volume vectors obey the same transformation laws as their continuous counterparts.

\subsection{Discrete quasi-periodic curves}
Fix \(3\leq N\in\NN\) and let
\(I=\{1,\ldots,N\}\subset\ZZ\) be the fundamental index interval. A
discrete curve is a map
\(
    \gamma\colon\ZZ\to\RR^3,\ i\mapsto\vec\gamma_i.
\)
It is \emph{quasi-periodic} with monodromy
\(g=(R,\vec b)\in\SE(3)\) if for all \(i\in\ZZ\)
\begin{equation*}
    \vec\gamma_{i+N}
    =
    g(\vec\gamma_i)
    =
    R\vec\gamma_i+\vec b.
\end{equation*}
Such a curve is determined by its values on \(I\), namely its \(N\) vertices \((\vec\gamma_1,\ldots,\vec\gamma_N)\) of the 
fun\-da\-men\-tal domain. Closed polygons correspond to
\(g=\id\). We write
\begin{equation*}
    \sfM_g
    =
    \{\gamma\colon\ZZ\to\RR^3
    \mid
    \vec\gamma_{i+N}=g(\vec\gamma_i)\ \forall\,i\in\ZZ\}
\end{equation*}
for the space of discrete quasi-periodic curves\footnote{For the differentiable and Hamiltonian constructions below, we restrict
to polygons with \(\vec\gamma_{i+1}\neq\vec\gamma_i\) for every \(i\), retaining the notation \(\sfM_g\) for this open subset.} with monodromy \(g\).

\paragraph{Piecewise-linear realization}
We denote by \(\cI\) the map sending each \(\gamma\in\sfM_g\) to its
continuous piecewise-linear quasi-periodic interpolant. On the \(i\)-th
edge,
\begin{equation*}
    \vec\gamma_i(s)
    =
    (1-s)\vec\gamma_i+s\,\vec\gamma_{i+1},
    \qquad
    s\in[0,1].
\end{equation*}
Writing \(\vec e_i=\vec\gamma_{i+1}-\vec\gamma_i\), we have
\(d\vec\gamma=\vec e_i\,ds\) on this edge. Vector fields along \(\gamma\), such as variations, are interpolated in
the same way. For \(X\in T_\gamma\sfM_g\), set
\begin{equation*}
    X_i(s)
    \coloneqq
    (1-s)X_i+s\,X_{i+1},
    \qquad
    s\in[0,1].
\end{equation*}
These interpolants are continuous and piecewise smooth, with possible loss of smoothness only at the vertices. Evaluating the integral formulas for \(\pzcL\), \(\pzcA\), \(\pzcV\), and \(\OmegaMW\) edgewise gives well-defined extensions to continuous piecewise-smooth quasi-periodic curves, for which we use the same notation.

\subsection{Discrete functionals}
The length of a discrete polygonal curve is given by 
\begin{equation}
\label{eq:DiscreteLengthEnergy}
    \sfL(\gamma)
    =
    \sum_{i=1}^N|\vec \gamma_{i+1}-\vec \gamma_i|.
\end{equation}

\begin{figure}[t]
    \centering
    \includegraphics[width = \columnwidth]{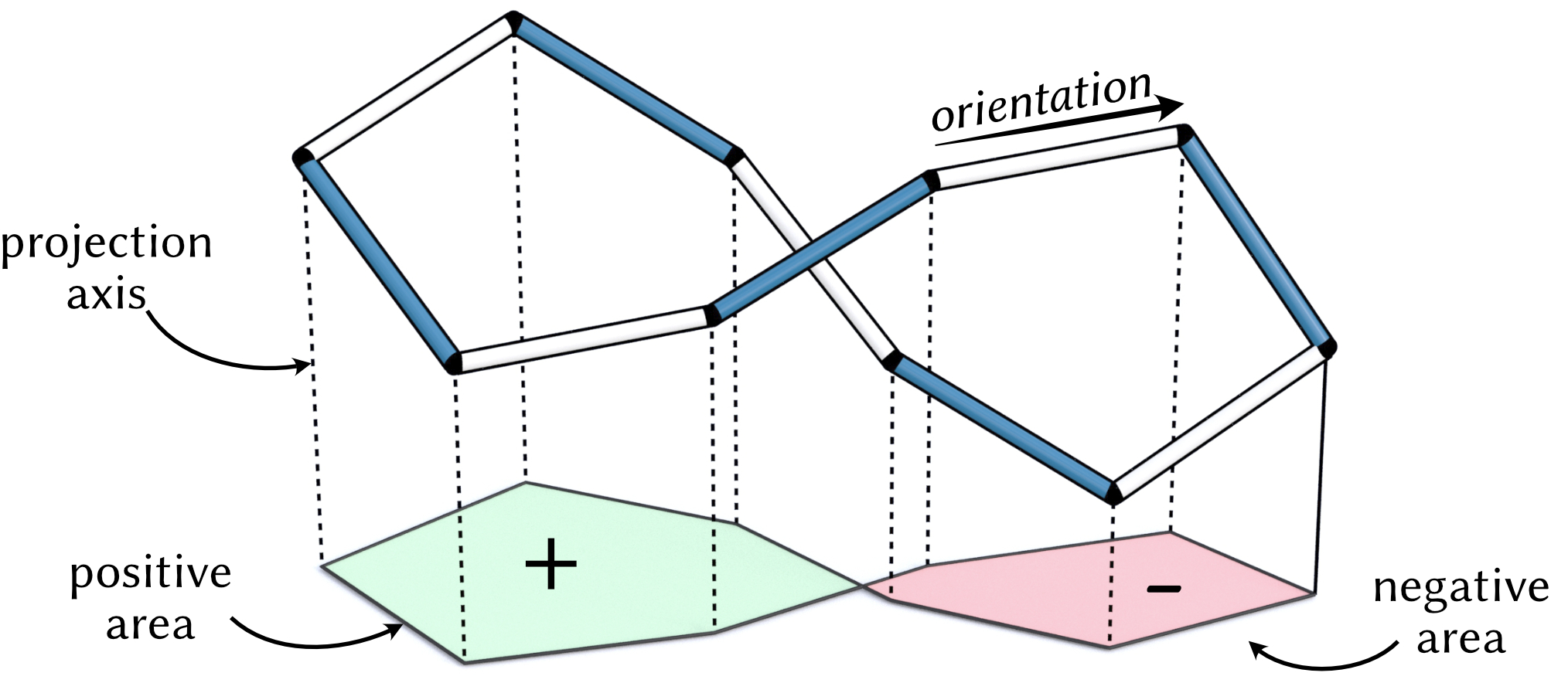}
    \caption{Evaluating the projected-area formula on the piecewise-linear interpolant gives a discrete area vector that satisfies the same rigid-motion transformation law as in the smooth theory.}
    \label{fig:ProjectedAreaDiscrete}
\end{figure}
The discrete area and volume vectors are obtained by evaluating the
continuous formulas on the piecewise-linear interpolant (Figs.~\ref{fig:ProjectedAreaDiscrete}~and~\ref{fig:RevolutionVolumeDiscrete}). On edge \(i\),
the area contribution is
\begin{equation*}
    \tfrac{1}{2}\int_0^1
    \vec\gamma_i(s)\times \vec e_i\,ds
    =
    \tfrac{1}{2}\vec \gamma_i\times\vec \gamma_{i+1}.
\end{equation*}
Summing over the fundamental domain gives 
\begin{equation}
\label{eq:DiscreteArea}
    \sfA(\gamma)
    =
    \tfrac{1}{2}\sum_{i=1}^N
    \vec \gamma_i\times\vec \gamma_{i+1}
    -\tfrac12\vec b\times\vec\gamma_1.
\end{equation}

Similarly, exact evaluation of the continuous volume integral on each
linear edge gives
\begin{equation}
\label{eq:DiscreteVolume}
    \sfV(\gamma)
    =
    \tfrac{1}{3}
    \sum_{i=1}^N
    \frac{\vec \gamma_i+\vec \gamma_{i+1}}{2}
    \times
    (\vec \gamma_i\times\vec \gamma_{i+1})
    -\tfrac16\vec\gamma_1\times(\vec b\times\vec\gamma_1).
\end{equation}

By construction, \(\sfA=\cI^*\pzcA\) and
\(\sfV=\cI^*\pzcV\), where the continuous vectors are understood in the edgewise sense above. Their well-definedness and transformation laws therefore transfer verbatim from the continuous theory (Lemmas~\ref{thm:AreaFundamentalDomainIndependent} and~\ref{thm:VolumeFundamentalDomainIndependent} and \thmref{thm:AreaVolumeTransformation}), giving the following discrete counterpart.

\begin{corollary}
\label{cor:DiscreteTransformation}
The monodromy-compatible components of \(\sfA\) and \(\sfV\) are independent of the chosen discrete fundamental domain and transform exactly by the laws of \thmref{thm:AreaVolumeTransformation}. In particular, for closed polygons the full pair \((\sfV,\sfA)\) transforms by the coadjoint action of \(\SE(3)\), whereas for non-closed polygons the monodromy selects the same components as in the smooth theory.
\end{corollary}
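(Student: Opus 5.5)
The plan is to reduce everything to the continuous statements through the identities \(\sfA=\cI^*\pzcA\) and \(\sfV=\cI^*\pzcV\), together with two structural facts about the piecewise-linear interpolation map \(\cI\). First, \(\cI\) maps \(\sfM_g\) into the space of continuous, piecewise-smooth quasi-periodic curves with the same monodromy. Second, \(\cI\) is equivariant under rigid motions: \(\cI(h\circ\gamma)=h\circ\cI(\gamma)\) for every \(h\in\SE(3)\). Given these, the domain-independence and the transformation laws of Lemmas~\ref{thm:AreaFundamentalDomainIndependent} and~\ref{thm:VolumeFundamentalDomainIndependent} and \thmref{thm:AreaVolumeTransformation} pull back directly.

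\textbf{Identities \(\sfA=\cI^*\pzcA\), \(\sfV=\cI^*\pzcV\).} I would verify these edgewise. On edge \(i\) we have \(d\vec\gamma=\vec e_i\,ds\), and
\[
\tfrac12\int_0^1 \vec\gamma_i(s)\times\vec e_i\,ds=\tfrac12\vec\gamma_i\times\vec\gamma_{i+1}.
\]
For the volume, expand \(\vec\gamma_i(s)\times(\vec\gamma_i(s)\times\vec e_i)\) and note that \(\vec\gamma_i(s)\times\vec e_i=\vec\gamma_i\times\vec\gamma_{i+1}\) is constant in \(s\). The integrand is then linear in \(s\), so integrating gives the midpoint factor in \teqref{eq:DiscreteVolume}. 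The boundary terms of \teqref{eq:DiscreteArea} and \teqref{eq:DiscreteVolume} coincide with those of \teqref{eq:AreaVectorCompleted} and \teqref{eq:VolumeVectorCompleted} evaluated at the vertex \(t_0=1\).

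\textbf{Quasi-periodicity and equivariance.} Both reduce to the fact that rigid motions are affine maps, so they commute with convex combinations:
\[
g\bigl((1-s)\vec\gamma_i+s\vec\gamma_{i+1}\bigr)=(1-s)\,g(\vec\gamma_i)+s\,g(\vec\gamma_{i+1}).
\]
Applied to \(g\), this gives \(\cI\gamma(\tau(t))=g(\cI\gamma(t))\) with \(\tau\) the shift by \(N\) edges. Applied to \(h\), it gives equivariance. Consequently, if \(h\in Z(g)\) then \(h\circ\gamma\in\sfM_g\), and
\[
\sfA(h\circ\gamma)=\pzcA(h\circ\cI\gamma),
\]
and likewise for \(\sfV\). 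The laws of \thmref{thm:AreaVolumeTransformation} now apply verbatim in each of the three cases. In the closed case this is exactly the coadjoint action \teqref{eq:CoadjointAction}.

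\textbf{Main obstacle: regularity.} The continuous lemmas and theorem were stated for smooth curves in \(\cM_g\), while \(\cI\gamma\) is only continuous and piecewise smooth. I would handle this by rechecking the appendix proofs and noting that they use only:
\begin{enumerate}[(i)]
\item additivity of the integrals over subintervals;
\item the substitution \(\vec\gamma(\tau(t))=R\vec\gamma(t)+\vec b\);
\item the fundamental theorem of calculus on each smooth piece.
\end{enumerate}
All three survive for continuous piecewise-smooth curves, because breakpoint contributions telescope by continuity.

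Discrete fundamental-domain changes, from starting vertex \(1\) to vertex \(k\), are the special case of shifting \(t_0\) to a vertex parameter. Alternatively, one can observe directly that shifting the starting index by one changes the edge sum by
\[
\tfrac12\bigl(g(\vec\gamma_1)\times g(\vec\gamma_2)-\vec\gamma_1\times\vec\gamma_2\bigr)
\]
and argue as in the smooth proof. As a fallback, one can approximate \(\cI\gamma\) in \(W^{1,1}\) by smooth quasi-periodic curves with the same monodromy. Since \(\pzcA\) and \(\pzcV\) are continuous in that topology, the identities pass to the limit.
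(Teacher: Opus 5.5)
Your proposal is correct and takes the same route as the paper, which likewise proves the corollary simply by noting \(\sfA=\cI^*\pzcA\) and \(\sfV=\cI^*\pzcV\) (edgewise) and transferring \lemref{thm:AreaFundamentalDomainIndependent}, \lemref{thm:VolumeFundamentalDomainIndependent} and \thmref{thm:AreaVolumeTransformation} verbatim. You also make explicit two points the paper leaves implicit. First, \(\cI\) commutes with rigid motions and preserves the monodromy, because rigid motions are affine. Second, the appendix arguments use only additivity and the fundamental theorem of calculus on smooth pieces, so they remain valid for the continuous piecewise-linear interpolant.
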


These constraints on discrete curves enter the momentum map below. Restricting \((\sfV,\sfA)\) to \(\fz(g)\) gives a covector in \(\fz(g)^*\).
\begin{figure}[b]
    \centering
    \includegraphics[width = \columnwidth]{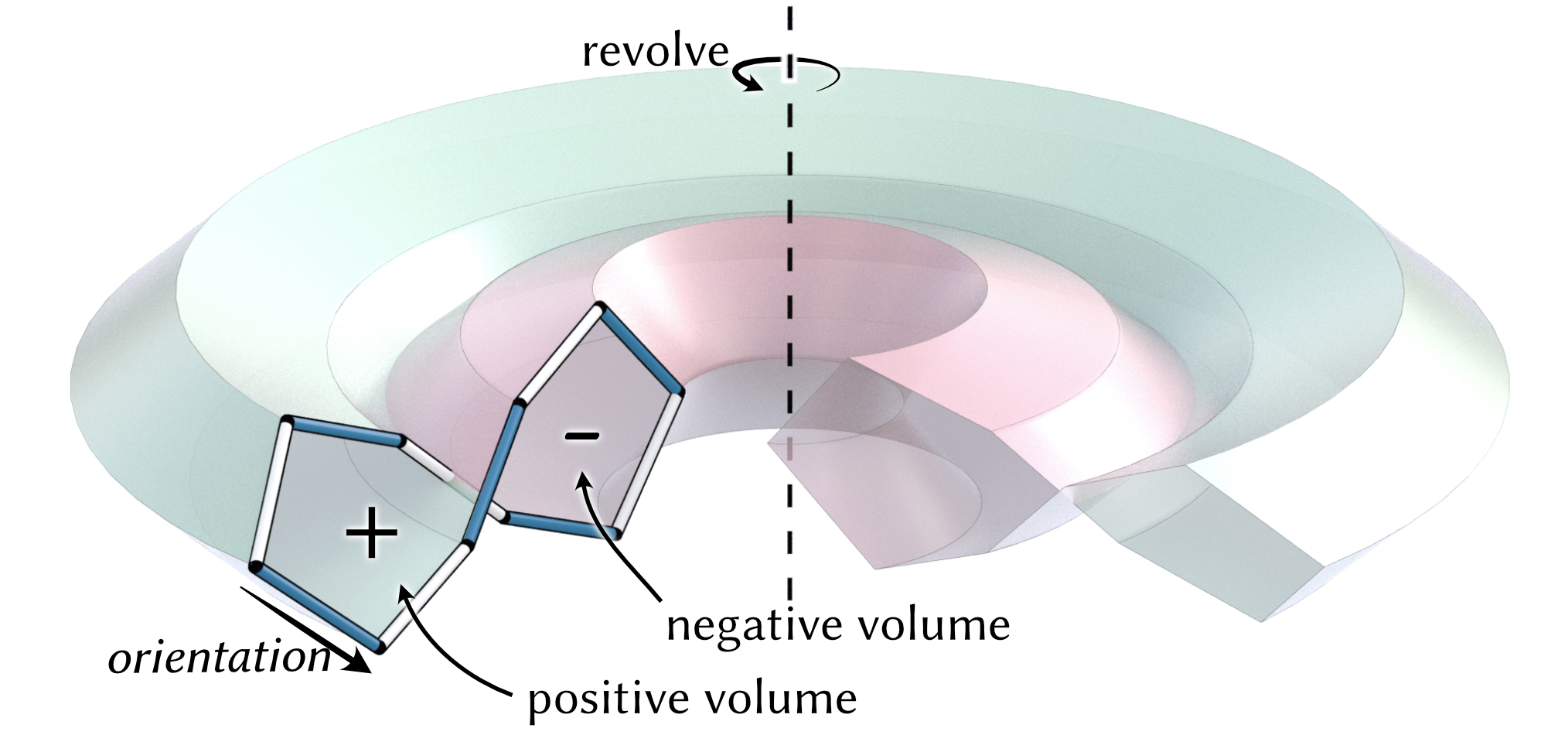}
    \caption{Exact integration along polygon edges gives a discrete volume vector whose transformation law agrees with the coadjoint action of rigid motions.}
    \label{fig:RevolutionVolumeDiscrete}
\end{figure}

\subsection{The discrete Marsden--Weinstein form}
\label{sec:DiscreteMW}
The Marsden--Weinstein form also has a direct discrete counterpart. Using the edgewise extension described above, we define \(\OmegaMWD\coloneqq\cI^*\OmegaMW\), where the differential of \(\cI\)
interpolates variations in the same piecewise-linear basis. The proof of
\thmref{thm:MWSymplectic} applies edgewise, with the boundary terms at
interior vertices canceling, so this extension remains closed. Since the
exterior derivative commutes with pullback,
\begin{equation*}
    d\OmegaMWD
    =
    d(\cI^*\OmegaMW)
    =
    \cI^*(d\OmegaMW)
    =
    0,
\end{equation*}
where we use that \(\OmegaMW\) is closed by \(\thmref{thm:MWSymplectic}\). Moreover, the finite-sum formula makes bilinearity and skew-symmetry immediate, so that we conclude:

\begin{proposition}
\label{thm:DiscreteMWClosed}
    The discrete Marsden--Weinstein form \(\OmegaMWD\) is a pre-symplectic
    \(2\)-form on \(\sfM_g\).
\end{proposition}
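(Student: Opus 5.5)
To prove the proposition, I would write \(\OmegaMWD\) as an explicit finite-dimensional form on \(\sfM_g\) and check the two defining properties: that it is a smooth skew-bilinear form at each point, and that it is closed. First, identify \(\sfM_g\) with an open subset of \((\RR^3)^N\) through the fundamental-domain vertices \((\vec\gamma_1,\ldots,\vec\gamma_N)\). Tangent vectors \(X,Y\in T_\gamma\sfM_g\) are then vertex fields with \(X_{i+N}=RX_i\). Substituting the piecewise-linear interpolants \(X_i(s)\), \(Y_i(s)\) and \(d\vec\gamma=\vec e_i\,ds\) into \teqref{eq:MWForm} and integrating the quadratic polynomial in \(s\) edgewise gives
\begin{equation*}
    \OmegaMWD_\gamma(X,Y)=\sum_{i=1}^N \tfrac16\det\bigl(2X_i+X_{i+1},\,Y_i\bigr)\cdot 0+\sum_{i=1}^N\int_0^1\det\bigl(X_i(s),Y_i(s),\vec e_i\bigr)\,ds .
\end{equation*}
This is a finite sum of trilinear determinants that is polynomial in the vertex data. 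It is therefore a smooth field of bilinear forms, and it is skew because the determinant is alternating in its first two slots. I will also check that it is well defined on \(\sfM_g\), meaning independent of the fundamental domain. Shifting the domain by one index replaces the first edge term by the term of edge \(N+1\), which involves \(g\)-images. Since \(\det(RX,RY,R\vec e)=\det(X,Y,\vec e)\) for \(R\in\SO(3)\), and the translation \(\vec b\) drops out of \(\vec e\) and of the variations, the two terms agree.

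For closedness I will use the pullback argument sketched before the statement. The map \(\cI\colon\sfM_g\to\) (continuous piecewise-smooth quasi-periodic curves) is linear in the vertices, so \(d\cI\) is exactly the piecewise-linear interpolation of variations, and \(\OmegaMWD=\cI^*\OmegaMW\) holds by definition. Because \(d\) commutes with pullback, it suffices to know that \(\OmegaMW\) is closed on the relevant space of piecewise-smooth curves.

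The main obstacle is that \thmref{thm:MWSymplectic} is stated for smooth curves in \(\cM_g\), not for the piecewise-smooth interpolants. To handle this I will not rely on an infinite-dimensional argument at all. Instead I will verify closedness directly in finite dimensions: \(\OmegaMWD\) is a constant-coefficient expression once it is rewritten via the primitive. Concretely, on each edge the integrand \(\det(\dot{\vec\gamma},\mathring{\vec\gamma},\vec\gamma')\) equals \(d\theta\) with \(\theta_\gamma(X)=\tfrac13\int\det(\vec\gamma,X,d\vec\gamma)\), up to an exact boundary term in \(s\). So I will exhibit a global one-form \(\theta\) on \(\sfM_g\), namely the edgewise evaluation of \(\tfrac13\int\det(\vec\gamma,\cdot,d\vec\gamma)\) plus a monodromy correction analogous to the \(-\tfrac16\vec\gamma_1\times(\vec b\times\vec\gamma_1)\) term in \teqref{eq:DiscreteVolume}. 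I will then check that \(d\theta=\OmegaMWD\).

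In the computation of \(d\theta\), the edgewise boundary contributions \(\det(\vec\gamma,X,Y)\) at interior vertices telescope between adjacent edges. The leftover endpoint terms at vertices \(1\) and \(N+1\) are absorbed by the correction term, using \(\vec\gamma_{N+1}=R\vec\gamma_1+\vec b\) and the \(\SO(3)\)-invariance of \(\det\). This telescoping with the monodromy boundary term is the delicate step, and it mirrors the appendix proof of \thmref{thm:MWSymplectic}. Once \(\OmegaMWD=d\theta\) holds, \(d\OmegaMWD=0\) follows, and together with skew-bilinearity this proves the proposition.
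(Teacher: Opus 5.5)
Your proposal is correct and is essentially the paper's argument made explicit. The paper likewise obtains closedness by running the Liouville-primitive proof of \thmref{thm:MWSymplectic} edgewise, with the interior vertex terms telescoping, and records the result as $d(\cI^*\OmegaMW)=\cI^*d\OmegaMW=0$. The only difference is that the paper uses a $g$-invariant primitive ($\beta_0$ for $g=\id$, and $\beta_{\vec s_g}=\tfrac12\star\vec p_{\vec s_g}^{\flat}$ with the origin on the axis otherwise), so its endpoint term vanishes outright.

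Your plan with the non-invariant $\beta_0=\tfrac13\det(\vec x,\cdot,\cdot)$ also works. The leftover is $d\theta(X,Y)-\OmegaMWD_\gamma(X,Y)=\tfrac13\det(R^\top\vec b,Y_1,X_1)$. This form has constant coefficients and is removed by adding $\tfrac16\det(R^\top\vec b,\vec\gamma_1,Z_1)$ to $\theta_\gamma(Z)$.

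One cosmetic fix: the first sum in your display, $\tfrac16\det(2X_i+X_{i+1},Y_i)\cdot 0$, is ill-formed and should simply be deleted.
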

\begin{wrapfigure}[5]{r}{0.4\columnwidth}
    \vspace{-2.em}
  {\hspace{-3.em}
    \includegraphics[width=0.5\columnwidth]{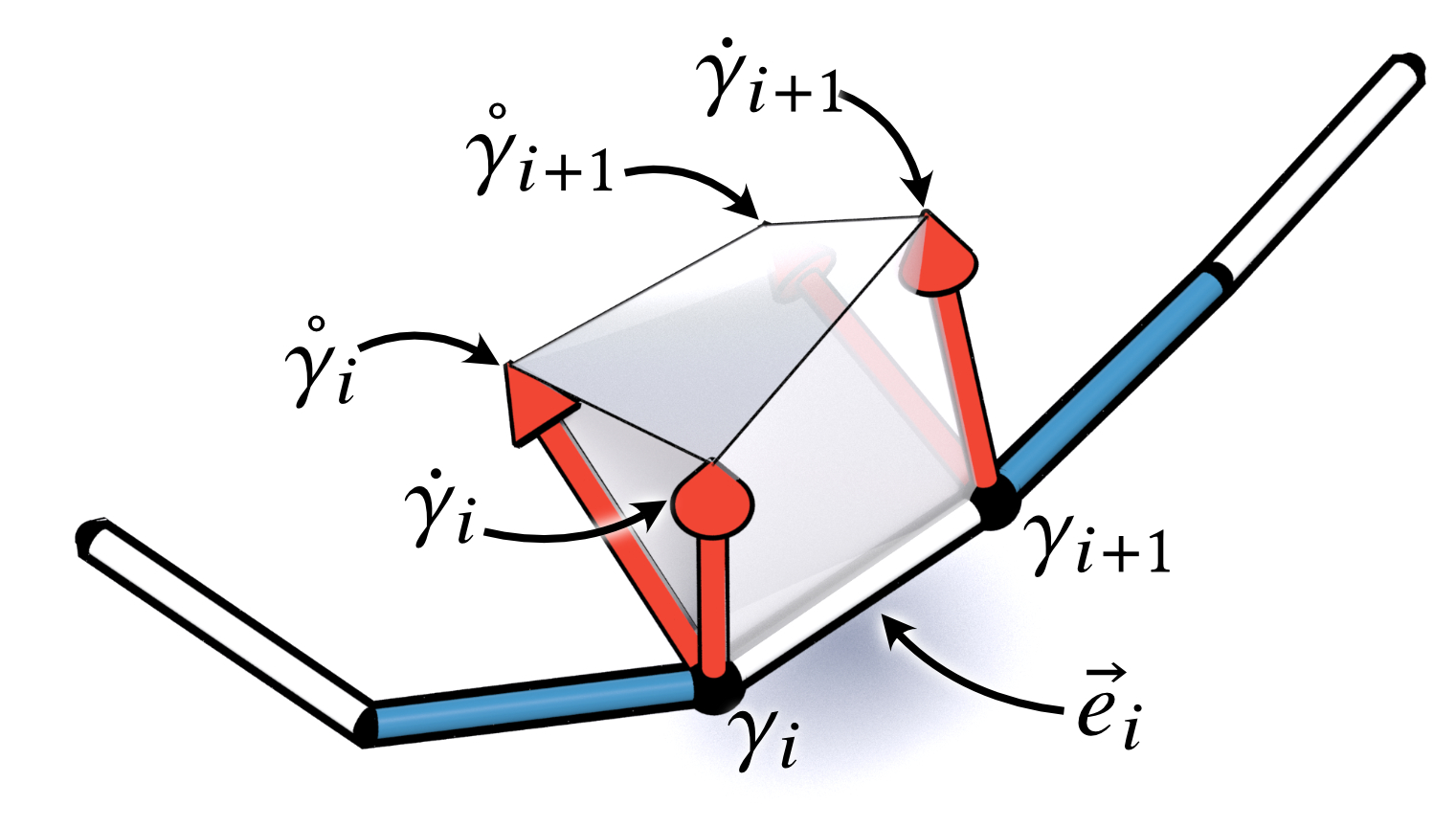}
  }  
\end{wrapfigure}
Geometrically, along each edge the linearly interpolated variations \(\dot\gamma\) and \(\mathring\gamma\) span an infinitesimal oriented area element. Its sweep along the edge determines an infinitesimal signed volume, and~\teqref{eq:DiscreteMWForm} sums these edgewise contributions (see~inset). 

\paragraph{Matrix representation.}
For computations, we assemble an explicit matrix representation \(\Omega\) of \(\OmegaMWD\). For variations \(\dot\gamma,\mathring\gamma\in T_\gamma\sfM_g\), piecewise-linear interpolation gives
\begin{align}
\label{eq:DiscreteMWForm}
\OmegaMWD_\gamma(\dot\gamma,\mathring\gamma)
&=
\sum_{i=1}^N
\int_0^1
\det
\left(
\dot{\vec\gamma}_i(s),
\mathring{\vec\gamma}_i(s),
\vec e_i
\right)\,ds \notag\\
&=
\sum_{i=1}^N
\Bigl[
\tfrac13\det(\dot{\vec\gamma}_i,\mathring{\vec\gamma}_i,\vec e_i)
+
\tfrac16\det(\dot{\vec\gamma}_{i+1},\mathring{\vec\gamma}_i,\vec e_i)
\\
&\qquad\qquad
+
\tfrac16\det(\dot{\vec\gamma}_i,\mathring{\vec\gamma}_{i+1},\vec e_i)
+
\tfrac13\det(\dot{\vec\gamma}_{i+1},\mathring{\vec\gamma}_{i+1},\vec e_i)
\Bigr].\notag
\end{align}
Thus, \(\Omega\) can be assembled directly from the vertex stencil without further integration.

Writing \([\vec e]\colon\RR^3\to\RR^3\) for the skew map
\([\vec e]\,\vec x=\vec e\times\vec x\), and using the convention
\(\OmegaMWD_\gamma(\dot\gamma,\mathring\gamma)
=\mathring{\gamma}^\top\Omega\dot{\gamma}\),
the \(3N\times3N\) matrix \(\Omega\) is block tridiagonal with
wrap-around blocks. Away from the boundary, its three nonzero blocks
in row \(i\) are
\begin{equation*}
    \Omega_{i,i-1}=\tfrac16[\vec e_{i-1}],
    \qquad
    \Omega_{i,i}=\tfrac13[\vec e_{i-1}+\vec e_i],
    \qquad
    \Omega_{i,i+1}=\tfrac16[\vec e_i].
\end{equation*}
The diagonal block collects the two edges incident to vertex \(i\),
while each off-diagonal block comes from the shared edge. At the
boundary, the same local stencil is evaluated on the padded vertices,
using cyclic copies for closed curves and monodromy copies for
quasi-periodic curves, as described in
\secref{sec:BoundaryConditions}. Since
\teqref{eq:DiscreteMWForm} is alternating in its two variations, the
assembled matrix \(\Omega\) is skew-symmetric.

\subsection{The discrete momentum map}
The centralizer \(Z(g)\) acts on \(\sfM_g\) vertexwise, which by construction preserves \(\OmegaMWD\). The momentum map of this action mirrors its continuous counterpart in~\thmref{thm:MomentumMap}.

\begin{theorem}
\label{thm:DiscreteMomentumMap}
    Let \(g\in\SE(3)\), and let
    \(\iota_\fz\colon\fz(g)\hookrightarrow\se(3)\) denote the inclusion
    of the Lie algebra of its centralizer. The discrete momentum map for
    the \(Z(g)\)-action on \((\sfM_g,\OmegaMWD)\) is
    \begin{equation*}
        \mu_g^{\mathrm{disc}}\colon \sfM_g\to\fz(g)^*,\ \gamma\mapsto
        \iota_\fz^*
        \begin{pmatrix}
            \sfV(\gamma) \\
            \sfA(\gamma)
        \end{pmatrix}.
    \end{equation*}
\end{theorem}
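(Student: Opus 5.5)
The plan is to reduce the statement to the continuous \thmref{thm:MomentumMap} by pullback along the piecewise-linear interpolation map \(\cI\colon\sfM_g\to\cM_g\) (extended edgewise to piecewise-smooth curves). We have \(\OmegaMWD=\cI^*\OmegaMW\), \(\sfA=\cI^*\pzcA\) and \(\sfV=\cI^*\pzcV\). It therefore suffices to check two things. First, \(\cI\) is equivariant for the vertexwise \(Z(g)\)-action. Second, the identity \(\iota_{X_\xi}\OmegaMW=d\langle\mu_g\mid\xi\rangle\) holds on piecewise-linear curves, and not only on smooth ones.

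\textbf{Equivariance and generators.} The action of \(h=(Q,\vec c)\) is affine, and affine maps commute with linear interpolation, so \(\cI(h\circ\gamma)=h\circ\cI(\gamma)\). Differentiating in \(h\) at the identity in the direction \(\xi=(\vec\omega,\vec v)\in\fz(g)\) shows that \(d\cI\) sends the discrete generator \((X_\xi)_i=\vec\omega\times\vec\gamma_i+\vec v\) to the continuous generator \(\vec\omega\times\vec\gamma(s)+\vec v\) on \(\cI(\gamma)\). This works because that field is affine in \(\vec\gamma\) and hence exactly linearly interpolated. Consequently
\[
\iota_{X_\xi}\OmegaMWD=\cI^*\bigl(\iota_{X_\xi}\OmegaMW\bigr),
\]
as one-forms on \(\sfM_g\), where we use that \(d\cI\) interpolates variations linearly, as stated in \secref{sec:DiscreteMW}.

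\textbf{Momentum identity on polygons.} I would prove the identity directly for the edgewise extension, following the computation behind \thmref{thm:MomentumMap}. On each edge, compute \(\det(\vec\omega\times\vec\gamma+\vec v,\mathring{\vec\gamma},\vec\gamma')\) and compare it with the first variation of \(\tfrac12\langle\vec v,\vec\gamma\times\vec\gamma'\rangle+\tfrac13\langle\vec\omega,\vec\gamma\times(\vec\gamma\times\vec\gamma')\rangle\). The difference is a total derivative \(\tfrac{d}{ds}(\cdots)\) of an expression depending only on \(\vec\gamma\) and \(\mathring{\vec\gamma}\), with no derivatives. Since both the curve and its variation are continuous at the vertices, these boundary terms telescope across interior vertices. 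What remains is a term at the two ends of the fundamental domain.

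\textbf{Main obstacle.} The step I expect to be delicate is showing that this end term is cancelled exactly by the variation of the completion terms \(-\tfrac12\vec b\times\vec\gamma_1\) and \(-\tfrac16\vec\gamma_1\times(\vec b\times\vec\gamma_1)\), once we use \(\vec\gamma_{N+1}=R\vec\gamma_1+\vec b\), \(\mathring{\vec\gamma}_{N+1}=R\mathring{\vec\gamma}_1\), and \(\xi\in\fz(g)\). My first attempt would be to reuse the smooth argument verbatim. That argument only uses the endpoint values \(\vec\gamma(t_0)\), \(\vec\gamma(\tau(t_0))\) and the variations there, all of which coincide for \(\cI(\gamma)\). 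Splitting by the three monodromy cases lets one invoke the conditions \(\vec\omega\parallel\vec s_g\), and \(\vec v\parallel\vec s_g\) in the screw case, so that the endpoint residual vanishes. Pulling back by \(\cI\) then gives \(\iota_{X_\xi}\OmegaMWD=d\langle\iota_\fz^*(\sfV,\sfA)\mid\xi\rangle\), which is the claim.
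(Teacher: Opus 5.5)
Your proposal is correct and follows the same route as the paper. You pull back along \(\cI\) and use that the affine generator \(\vec\omega\times\vec\gamma+\vec v\) is interpolated exactly, so that \(\iota_{X_\xi}\OmegaMWD=\cI^*(\iota_{X_\xi}\OmegaMW)\); you then invoke the momentum identity of \thmref{thm:MomentumMap}, and your edgewise check that this identity survives on piecewise-linear curves (discrepancies are total derivatives in \(\vec\gamma,\mathring{\vec\gamma}\) alone that telescope at interior vertices, with the seam term cancelled by the completions as in \lemref{lem:QuasiPeriodicFirstVariation}) supplies a step the paper leaves implicit.
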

\begin{proof}
    See~\appref{sec:DiscreteMomentumMapProof}.
\end{proof}
Unpacking the restriction to \(\fz(g)\) yields the discrete analogues of the three cases in the continuous theory:
\begin{enumerate}[(i)]
    \item For \(g=\id\),
    \begin{equation*}
        \mu_g^{\mathrm{disc}}(\gamma)
        =
        \begin{pmatrix}
            \sfV(\gamma) \\
            \sfA(\gamma)
        \end{pmatrix}.
    \end{equation*}

    \item For pure translational monodromy, \(R=\Id\) and \(\vec b\neq 0\),
    \begin{equation*}
        \mu_g^{\mathrm{disc}}(\gamma)
        =
        \begin{pmatrix}
            \langle \nicefrac{\vec b}{|\vec b|},\sfV(\gamma)\rangle \\
            \sfA(\gamma)
        \end{pmatrix}.
    \end{equation*}

    \item For screw monodromy, \(R\neq \Id\),
    \begin{equation*}
        \mu_g^{\mathrm{disc}}(\gamma)
        =
        \begin{pmatrix}
            \langle \vec s_g,\sfV(\gamma)\rangle \\
            \langle \vec s_g,\sfA(\gamma)\rangle
        \end{pmatrix},
    \end{equation*}
    where \(\vec s_g\in S^2\) is chosen such that \(\RR\vec s_g\) is the screw axis.
\end{enumerate}

The discrete area and volume components are therefore the Noether charges of the Hamiltonian \(Z(g)\)-action on the pre-symplectic manifold \((\sfM_g,\OmegaMWD)\).

\subsection{Discrete Elastic Curves}
\label{sec:DiscreteElasticCurves}
Finally, building on the discrete structural properties established above, we define our notion of discrete elastic curves, motivated by the continuous isoperimetric~\thmref{thm:Isoperimetric}. 

In the smooth theory, \(\pzcL\), \(\pzcA\), and \(\pzcV\) are invariant
under reparametrization. An \(N\)-vertex polygon has no analogous
continuous freedom, since resampling generally changes its geometry.
We therefore fix the discrete arc-length gauge by requiring equal edge
lengths and denote the resulting space of quasi-periodic polygons with
monodromy \(g\in\SE(3)\) by
\begin{equation*}
\sfM_g^{\mathrm{arc}}
\coloneqq
\{\gamma\in\sfM_g \mid |\vec e_i|=|\vec e_j|,\ \forall i,j\in I\}.
\end{equation*}

\begin{definition}
    A curve \(\gamma\in\sfM_g^{\mathrm{arc}}\) is \emph{discrete elastic} if it is a critical point of the discrete length functional \(\sfL\) for fixed discrete momentum data \(\mu_g^{\mathrm{disc}}(\gamma)\in\fz(g)^*\).
\end{definition}
Therefore, in the computations below, we seek discrete elastica that are constrained local minimizers of the \emph{discrete isoperimetric problem} 
\begin{equation}
\label{eq:DiscreteIsoperimetricProblem}
        \begin{cases}
        \displaystyle \min_{\gamma\in\sfM_g^{\mathrm{arc}}} \ \sfL(\gamma), \\[0.4em]
        \text{s.t.} \quad \mu_g^{\mathrm{disc}}(\gamma)=\mu_0 .
    \end{cases}
\end{equation}
Here \(\mu_g^{\mathrm{disc}}\) is the discrete momentum map of~\thmref{thm:DiscreteMomentumMap}, so that the momentum constraint in~\teqref{eq:DiscreteIsoperimetricProblem} fixes exactly those components of \((\sfV,\sfA)\) that are compatible with the monodromy \(g\). 

We thus obtain an explicit finite-dimensional variational problem posed directly on polygon vertices and whose solutions exhibit consistent geometries across different resolutions (Figs.~\ref{fig:Teaser},~\ref{fig:ResolutionAblationClosed},~and~\ref{fig:SmokeRingFlow_ResolutionAblation}). This formulation only involves the geometric quantities \(\sfL\), \(\sfA\), and \(\sfV\) that can be evaluated directly on discrete polygonal curves and no further auxiliary discretization is required.

\section{Numerical Optimization}
\label{sec:NumericalOptimization}
The discrete isoperimetric
problem~\eqref{eq:DiscreteIsoperimetricProblem} is a low-order,
finite-dimen\-sional constrained problem with the polygon vertices as
variables and can therefore be solved directly using standard
optimization methods.

\subsection{Quasi-periodicity}
\label{sec:BoundaryConditions}
Quasi-periodicity can be imposed in two ways, akin to the two characterizations in~\thmref{thm:Isoperimetric}.
One restricts to quasi-periodic curves in \(\cM_g\) and enforces the monodromy as a boundary condition. The other, which we use, treats the curve as de facto infinite and generates whatever lies outside the fundamental domain from the monodromy. Either way only the \(N\) vertices \(\vec\gamma_1,\ldots,\vec\gamma_N\) of one fundamental domain are optimized, and curves of arbitrary length can be obtained from the monodromy relation at no additional cost.

Concretely, whenever a stencil reaches outside the fundamental domain, the required vertices are generated from the monodromy,
\begin{equation*}
    \vec\gamma_{N+1}=g(\vec\gamma_1),
    \qquad
    \vec\gamma_0=g^{-1}(\vec\gamma_N).
\end{equation*}
Since the functionals used here depend only on adjacent edges, one ghost vertex on each side contains all local information from the infinite
curve. We evaluate the functionals on the padded curve and pull their gradients back through the padding map to the \(N\) fundamental-domain
variables. Thus the monodromy is handled entirely by padding, while the optimizer sees only the fundamental variables.

\begin{figure}
    \centering
    \includegraphics[width=\columnwidth]{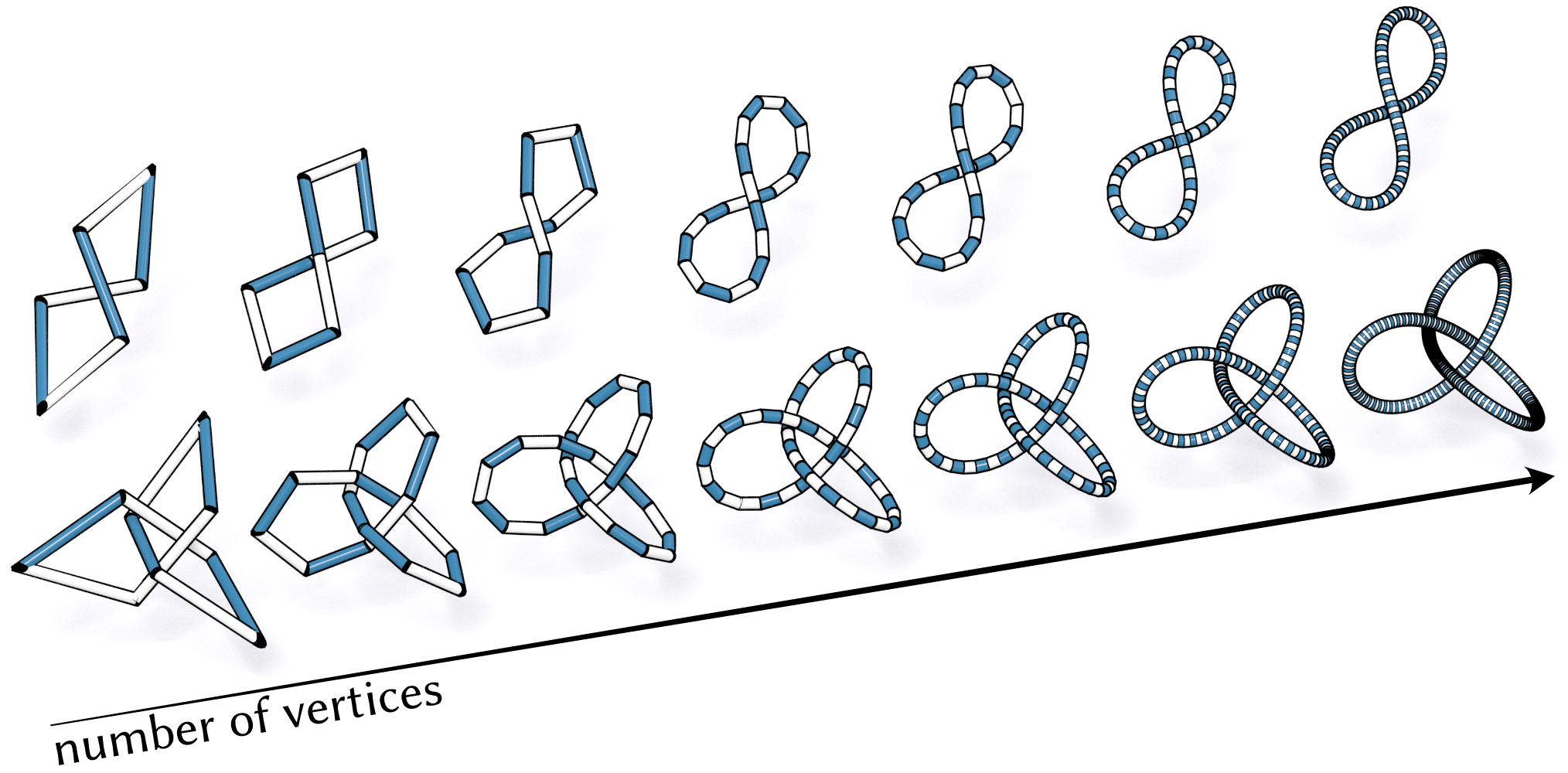}
    \caption{The two rows show polygonal length minimizers at prescribed
area and volume for two closed curves, with the number of vertices
increasing from left to right. Even at the coarsest resolutions, the
solutions faithfully reproduce the distinctive shapes of the
corresponding smooth elastica, while refinement progressively improves
the agreement.}
    \label{fig:ResolutionAblationClosed}
\end{figure}

\subsection{Constraints}
\label{sec:NumericalConstraints}

To identify solutions to \teqref{eq:DiscreteIsoperimetricProblem}, we solve
the constrained optimization problem
\begin{equation}
\label{eq:DiscreteIsoperimetricProblemComputation}
    \begin{cases}
        \displaystyle \min_{\gamma\in\sfM_g} \ \sfL(\gamma), \\[0.4em]
        \text{s.t.} \quad C_g(\gamma)=0 .
    \end{cases}
\end{equation}
Here \(C_g\colon\sfM_g\to\RR^m\) is the constraint map, comprised of momentum and equal-edge constraints. Writing
\(\ell(\gamma)\coloneqq\tfrac{1}{N}\sfL(\gamma)\), 
we set\footnote{The equal-edge residuals sum identically to zero, so only \(N-1\) are independent. We retain all \(N\) to preserve cyclic symmetry. This leaves only an additive indeterminacy in the associated multipliers and does not affect the primal problem.}
\begin{equation}
    \label{eq:DiscreteConstraintMap}
    C_g(\gamma)
    =
    \begin{pmatrix}
        \mu_g^{\mathrm{disc}}(\gamma)-\mu_0 \\
        |\vec e_1|-\ell(\gamma) \\
        \vdots \\
        |\vec e_N|-\ell(\gamma)
    \end{pmatrix}.
\end{equation}
Thus the computed stationary points are discrete arc-length param\-etrized
curves satisfying the constraints on the components of \((\sfV,\sfA)\)
compatible with the monodromy \(g\).

\begin{remark}
The equal-edge constraints fix the discrete arc-length gauge but leave residual symmetries: cyclic relabeling for closed polygons and rigid motions \(h\in Z(g)\) preserving the prescribed momentum data. These leave the geometry unchanged but may appear numerically as null or near-null directions.
\end{remark}

\subsection{Optimization}
\label{sec:AugmentedLagrangianOptimization}

The input is a monodromy \(g\in\SE(3)\), an initial polygon
\(\gamma^0\in\sfM_g\), and prescribed momentum data \(\mu_0\in\fz(g)^*\).
The output is a polygonal curve approximating a stationary point of discrete
length at fixed discrete momentum.

We solve \teqref{eq:DiscreteIsoperimetricProblemComputation} by a
standard augmented Lagrangian method. For multipliers
\(\lambda\in\RR^m\) and penalty parameter \(\rho>0\), define
\begin{equation}
\label{eq:AugmentedLagrangian}
    \mathcal L_\rho(\gamma,\lambda)
    =
    \sfL(\gamma)
    +
    \langle\lambda\,\vert\,C_g(\gamma)\rangle
    +
    \tfrac{\rho}{2}\|C_g(\gamma)\|^2 .
\end{equation}
The resulting iteration is summarized in
\algref{alg:SolveDiscreteIsoperimetricProblem}. Each inner problem is
solved by a quasi-Newton method using the analytic gradient
\begin{equation}
    \label{eq:AugmentedLagrangianGradient}
    \nabla_\gamma\mathcal L_\rho
    =
    \nabla_\gamma\sfL
    +
    dC_g(\gamma)^\top
    \bigl(\lambda+\rho C_g(\gamma)\bigr).
\end{equation}
The low differential order also keeps the reduced Hessian well conditioned
under refinement, which is what makes a quasi-Newton method practical here
(\figref{fig:ConvergenceAndConditioning}).

The length and equal-edge terms have explicit local derivatives, while
\(\sfA\) and \(\sfV\) are polynomial in the vertex positions. Function
and gradient evaluations can therefore be assembled edgewise at linear
cost in \(N\). The same routine applies to every resolution and
monodromy type, with \(g\) entering only through the padded boundary
data and the momentum components selected by \(C_g\).\\

\begin{algorithm}[h]
\caption{\textbf{DiscreteIsoperimetricProblem}
\((g,\gamma^0,\mu_0)\)}
\label{alg:SolveDiscreteIsoperimetricProblem}
\begin{algorithmic}[1]
\Require \(g\in\SE(3)\), \(\gamma^0\in\sfM_g\),
\(\mu_0\in\fz(g)^*\)
\Ensure discrete elastic \(\gamma\in\sfM_g^{\mathrm{arc}}\) with
\(\mu_g^{\mathrm{disc}}(\gamma)=\mu_0\)
\State \(\gamma\gets\gamma^0\)
\State initialize \(\lambda\) and \(\rho>0\)
\While{not converged}
    \State
    \(\widehat\gamma\gets\Call{Pad}{\gamma,g}\)
    \Comment{\secref{sec:BoundaryConditions}}
    \State
    \(C_g\gets\Call{ConstraintMap}{g,\mu_0}\)
    \Comment{\teqref{eq:DiscreteConstraintMap}}
    \State
    \(\widehat\gamma\vert_I
    \gets
    \argmin
    \mathcal L_\rho(\,\cdot\,\vert_I,\lambda)\)
    \Comment{Eqs.~\eqref{eq:AugmentedLagrangian}
    and~\eqref{eq:AugmentedLagrangianGradient}}
    \State
    \(\gamma\gets\Call{DiscardPadding}{\widehat\gamma}\)
    \State
    \(\lambda\gets\lambda+\rho\,C_g(\gamma)\)
    \If{the constraint residual stalls}
        \State increase \(\rho\)
    \EndIf
\EndWhile
\State \Return \(\gamma\)
\end{algorithmic}
\end{algorithm}

\begin{remark}
In practice, a useful warm start is obtained by first minimizing the discrete
Dirichlet energy
\(
    \sfE(\gamma)=\tfrac12\sum_{i=1}^N|\vec e_i|^2
\)
subject only to the momentum constraints. After approximate convergence,
we impose the equal-edge constraints and solve the original length
problem in the discrete arc-length gauge.
\end{remark}

\section{Hamiltonian Flows on Space Curves}
\label{sec:HamiltonianFlowsOnDiscreteSpaceCurves}
The discrete Marsden--Weinstein form of \secref{sec:DiscreteMW} not only
identifies the isoperimetric constraints as momentum data but also
provides, as in the smooth theory, the phase-space structure underlying
Hamiltonian evolutions of curves. In this section, we apply this construction directly to polygonal curves, focusing on discrete analogues of tangential reparametrization, vortex-filament motion, and the modified Korteweg--de Vries flow. While the framework is more
general, these examples are particularly natural because their Hamiltonians can be evaluated directly on polygons.
\begin{figure}[t]
    \centering
    \includegraphics[width=\columnwidth]
    {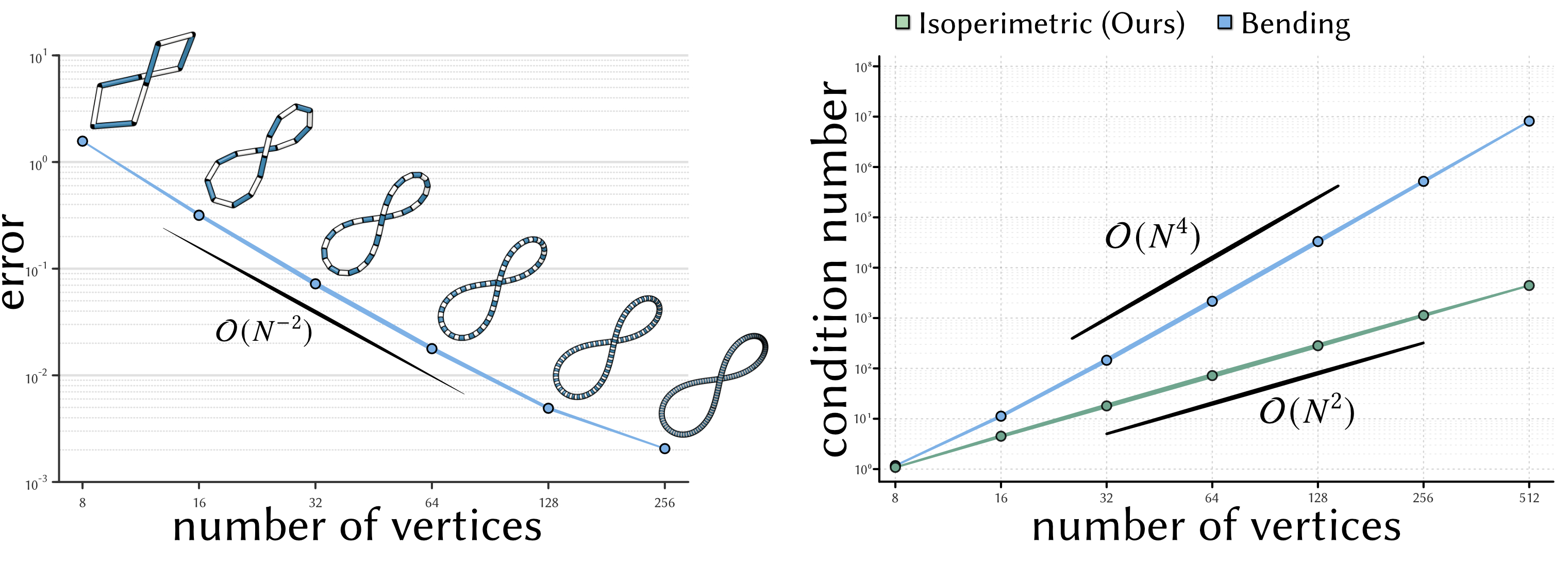}
    \caption{\emph{Left:} Our discrete elastica converge quadratically to the smooth reference solution under refinement. \emph{Right:} The reduced Hessian of our isoperimetric formulation stays far better conditioned, growing like \(N^2\) against \(N^4\) for the bending-energy baseline, as their second- and fourth-order stationarity conditions predict.}
    \label{fig:ConvergenceAndConditioning}
\end{figure}

\subsection{Spectrum of Marsden--Weinstein form}
\label{sec:SpectrumOfMW}
In the continuous setup, the Marsden--Weinstein form is a pre-symplectic closed \(2\)-form whose only degeneracy is given by repara\-metrization of curves. To investigate how the properties of the Marsden--Weinstein form carry over to the discrete setting, we examine the spectrum of its associated linear operator.

\subsubsection{Spectrum of the continuous Marsden--Weinstein form}
Since \(\OmegaMW_\gamma\) is a \(2\)-form rather than an operator, we define its spectrum through its \(L^2\)-Riesz representative \(J_\gamma\). For \(X,Y\in T_\gamma\cM_g\) it is given by
\begin{equation*}
    \OmegaMW_\gamma(X,Y)
    =
    \langle J_\gamma X,Y\rangle_{L^2}.
\end{equation*}
\teqref{eq:MWForm} yields
\begin{equation*}
    J_\gamma X=T\times X,
\end{equation*}
where \(T=\gamma'\), so \(J_\gamma\) is pointwise cross product with the unit tangent.

The kernel of \(J_\gamma\) consists precisely of the tangential fields \(X=fT\), where \(f\colon\RR\to\RR\) is smooth and \(\tau\)-periodic. These fields generate infinitesimal
reparametrizations. On normal
fields,
\begin{equation*}
    J_\gamma^2=-\mathrm{id}.
\end{equation*}
Hence \(J_\gamma\) induces a complex structure on
\(T_\gamma\cM_g/\ker J_\gamma\), naturally identified with the normal
bundle~\cite{Chern:2020:CHF}. This is the local linear model of the
reduced Marsden--Weinstein form.

Together, these identities show that the spectrum of \(J_\gamma\) on
vector fields along \(\gamma\) is
\begin{equation*}
\sigma(J_\gamma)=\{0,\pm i\}.
\end{equation*}
The zero eigenspace corresponds to tangential directions, while the eigenvalues \(\pm i\) are those of the \(90^\circ\)-rotation induced by \(J_\gamma X=T\times X\) on each normal plane.

\begin{table*}[t]
\centering
\small
\begin{tabular}{clll}
\toprule
\(H\) & \(dH_\gamma\) & \(X_H\) & Interpretation \\
\midrule
\(\langle \vec v,\pzcV\rangle\)
&
\(\vec\gamma'\times(\vec v\times\vec\gamma)\)
&
\(X_{\langle \vec v,\pzcV\rangle}=\vec v\times\vec\gamma\)
&
rotation about \(\vec v\)
\\

\(\langle \vec v,\pzcA\rangle\)
&
\(\vec\gamma'\times \vec v\)
&
\(X_{\langle \vec v,\pzcA\rangle}=\vec v\)
&
translation along \(\vec v\)
\\

\(0\)
&
\(0\)
&
\(X_0=\vec\gamma'\)
&
tangent flow
\\

\(\pzcL\)
&
\(-\vec\gamma''\)
&
\(X_\pzcL=\vec\gamma'\times\vec\gamma''\)
&
vortex-filament flow
\\

\(\Phi\)
&
\(-\vec\gamma'\times\vec\gamma'''\)
&
\(X_\Phi=-\vec\gamma'''-\tfrac{3}{2}|\vec\gamma''|^2\vec\gamma'\)
&
mKdV flow
\\

\(\pzcB\)
&
\(\left(\vec\gamma'''+\tfrac{3}{2}|\vec\gamma''|^2\vec\gamma'\right)'\)
&
\(X_\pzcB=-\vec\gamma'\times\vec\gamma''''-\tfrac{3}{2}|\vec\gamma''|^2\vec\gamma'\times\vec\gamma''+\det(\vec\gamma',\vec\gamma'',\vec\gamma''')\vec\gamma'\)
&
bending-energy flow
\\
\bottomrule
\end{tabular}
\caption{Low-order Hamiltonians and the corresponding flows in the localized-induction hierarchy. Here \(\vec v\in\mathbb{R}^3\) is a fixed unit vector. The quantities \(\pzcA\) and \(\pzcV\) are the area and volume vectors, \(\Phi\) is the holonomy angle, and \(\pzcB=\tfrac{1}{2}\int |\vec \gamma''|^2\,ds\) is the bending energy.}
\label{tab:localized-induction-hierarchy}
\end{table*}

\subsubsection{Spectrum of the discrete Marsden--Weinstein form}
\label{sec:SpectrumOfTheDiscreteMarsdenWeinsteinForm}
To compare the discrete form with the smooth operator \(J_\gamma\), let
\(\Omega\) be the matrix assembled from
\teqref{eq:DiscreteMWForm} and equip the vertex fields with the
finite-element \(L^2\) metric represented by the mass matrix
\begin{equation}
\label{eq:FEMassMatrix}
    (\sfM_\gamma X)_i
    =
    \tfrac{|\vec e_{i-1}|}{6}X_{i-1}
    +
    \tfrac{|\vec e_{i-1}|+|\vec e_i|}{3}X_i
    +
    \tfrac{|\vec e_i|}{6}X_{i+1}.
\end{equation}
The corresponding mass representative \(\sfJ_\gamma\) is defined by
\begin{equation*}
    \OmegaMWD_\gamma(X,Y)
    =
    \langle\sfJ_\gamma X,Y\rangle_{\sfM_\gamma}.
\end{equation*}
Thus \(\sfJ_\gamma=\sfM_\gamma^{-1}\Omega\), and its spectrum is
obtained from the generalized eigenvalue problem
\begin{equation*}
    \Omega X=\lambda\sfM_\gamma X,
\end{equation*}
or equivalently from the similar Euclidean skew-symmetric matrix
\begin{equation*}
    \sfM_\gamma^{-1/2}\Omega\sfM_\gamma^{-1/2}.
\end{equation*}
Since \(\sfM_\gamma^{-1/2}\Omega\sfM_\gamma^{-1/2}\) is real and
skew-symmetric, the spectrum of \(\sfJ_\gamma\) is purely imaginary.
\figref{fig:SpectrumJ} shows it under refinement for three curves\footnote{The curves are \(\gamma(t) = (\cos t, \sin t, 0)\),
\(\bigl((2+\cos(3t+\tfrac{3}{10}))\cos 2t, (2+\cos(3t+\tfrac{3}{10}))\sin 2t,
\sin(3t+\tfrac{3}{10})\bigr)\), and
\(\bigl(\cos t, \sin t, \tfrac{7}{20}\sin(2t+\tfrac{7}{10}) +
\tfrac{1}{4}\sin(3t+\tfrac{3}{10})\bigr)\), sampled uniformly in \(t\).}. The
eigenvalues separate into the three groups predicted by the pointwise
model, one near \(0\) for the tangential reparametrization directions and
two approaching \(\operatorname{Im}\lambda = \pm 1\) for the normal planes.\\

\begin{figure}[h]
    \centering
    \includegraphics[width=\columnwidth]
    {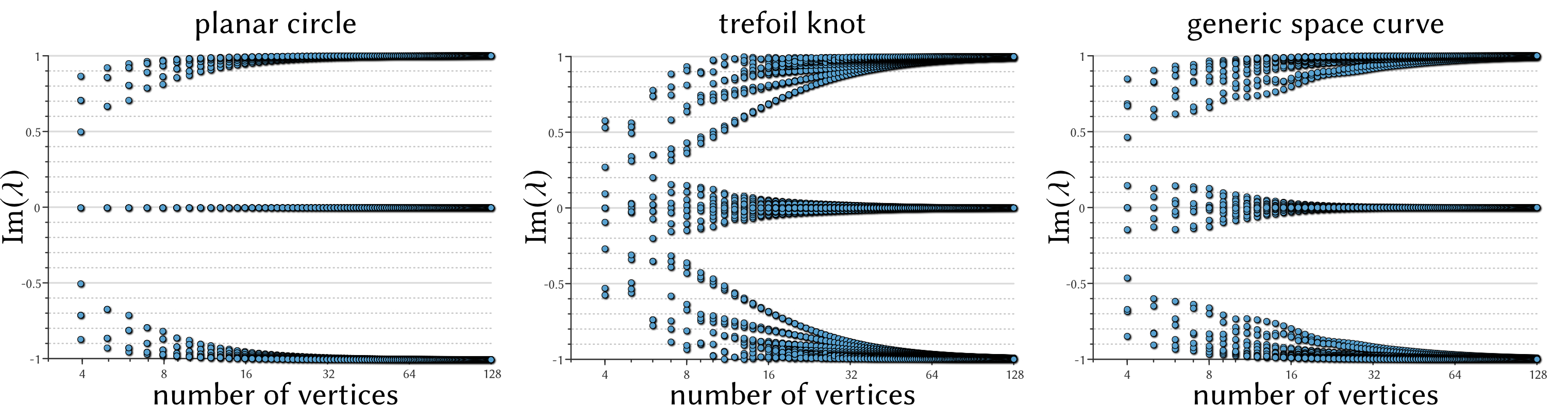}
    \caption{The spectrum of \(\sfJ_\gamma\) is purely imaginary, and for a planar circle, a trefoil knot, and a generic space curve alike it splits under refinement into \(N\) eigenvalues near \(0\) along the discrete reparametrization directions and \(2N\) eigenvalues in two branches approaching \(\pm i\), one for each normal plane. On the planar circle the near-zero group vanishes to machine precision at every resolution, whereas on the two space curves it only contracts under refinement.
}
    \label{fig:SpectrumJ}
\end{figure}

\subsubsection{Reparametrization directions}
\label{sec:ReparametrizationDirections}
The near-zero cluster has a direct geometric interpretation. In the smooth theory, the tangential fields \(X=fT\) are precisely the infinitesimal reparametrizations and form the zero eigenspace of
\(J_\gamma\), while the normal fields give the two branches at \(\pm i\).

An \(N\)-vertex polygon has no canonical tangent at its vertices. Nevertheless, \(\sfJ_\gamma\) has \(N\) small eigenvalues, one scalar degree
of freedom per vertex. We take the corresponding spectral subspace \(\cR_\gamma \subset T_\gamma\cM_g\) as the discrete reparametrization
directions. 

Planar polygons are an exception. All edges lie in a common plane, so \(\Omega\) sends every in-plane field to a multiple of the unit normal \(\vec n\) of said plane. This maps \(2N\) dimensions into \(N\) and leaves \(\ker\sfJ_\gamma = \ker\Omega\) at least \(N\)-dimensional. We observe this for the planar circle in \figref{fig:SpectrumJ}, whose near-zero cluster sits at machine precision at every resolution.

Removing \(\cR_\gamma\) is the discrete analogue of passing to shape space.
In the Hamiltonian constructions below, \(\cR_\gamma\) is removed for the
vortex-filament flow, whereas the tangent flow selects a distinguished
edge-length-preserving direction associated with this near-kernel. The mKdV
flow is obtained from an edge-length-preserving weighted least-squares solve.

\subsection{The localized-induction hierarchy}
We now derive several Hamiltonian curve flows from the
Marsden--Weinstein form. In the smooth setting, these flows belong to the well-studied integrable localized-induction hierarchy~\cite{Chern:2020:CHF,LangerSinger:1996:LAK} (\tabref{tab:localized-induction-hierarchy}), making them natural benchmarks for transferring the Hamiltonian construction to polygonal curves.
\begin{figure}[b]
    \centering
    \includegraphics[width=\columnwidth]
    {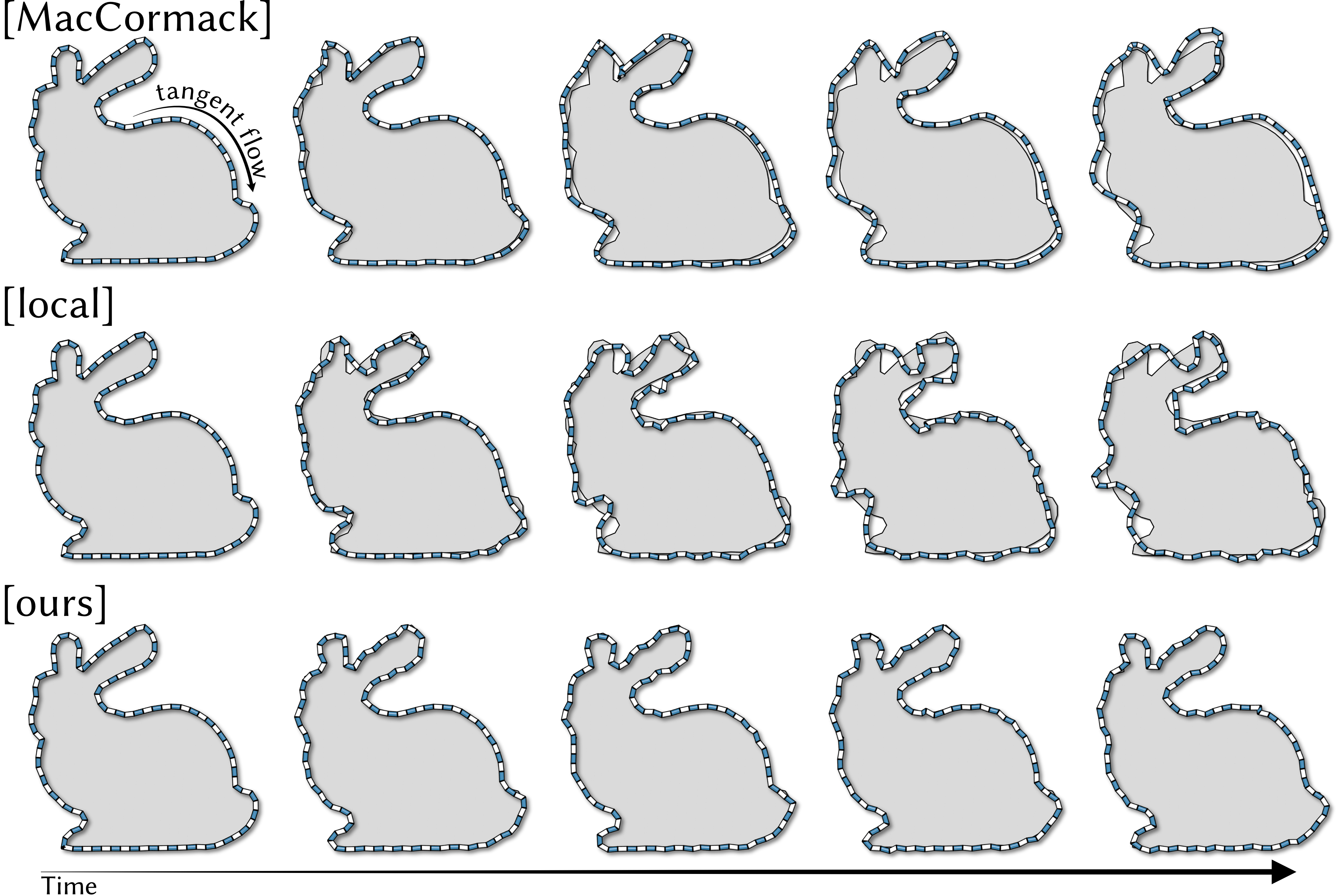}
    \caption{On the planar bunny, MacCormack advection and the local
    M\"obius flow visibly deform the shape over long times, while the
    Hamiltonian tangent flow largely preserves its geometric features.}
    \label{fig:TangentFlow_PlanarBunny_Comparison}
\end{figure}

For a Hamiltonian \(H\), its vector field \(X_H\) is defined by
\begin{equation}
\label{eq:DefHamiltonianFlow}
    \iota_{X_H}\OmegaMW_\gamma=dH_\gamma .
\end{equation}
Because \(\OmegaMW\) is pre-symplectic, \(X_H\) is determined only
modulo \(\ker\OmegaMW_\gamma\), the reparametrization directions. The
low-order cases used below are listed in
\tabref{tab:localized-induction-hierarchy}.

For a constant Hamiltonian, \(dH=0\), so its Hamiltonian vector fields
lie in \(\ker\OmegaMW_\gamma\). The choice
\begin{equation*}
    X_0=\vec\gamma'
\end{equation*}
gives the tangent flow, which only reparametrizes the curve and leaves
its geometry unchanged (Figs.~\ref{fig:TangentFlow_PlanarBunny_Comparison}~and~\ref{fig:TangentFlow_4FoldSymmetry_Comparison}).

The length functional generates the localized-induction, or
vortex-filament, flow (Figs.~\ref{fig:SmokeRingFlowEllipseComparison}
and~\ref{fig:SmokeRingFlow_ResolutionAblation})
\begin{equation*}
    X_\pzcL=\vec\gamma'\times\vec\gamma''.
\end{equation*}

The highest-order Hamiltonian in this hierarchy that remains naturally
defined on polygons is the holonomy \(\Phi\), which generates the
modified Korteweg--de Vries flow (\figref{fig:mKdVSolitonOnSphere})
\begin{equation*}
    X_\Phi
    =
    -\vec\gamma'''
    -\tfrac32|\vec\gamma''|^2\vec\gamma'.
\end{equation*}

\subsection{Hamiltonian flows on discrete curves}
\label{sec:discretizing-the-flows}
Although the smooth tangent, vortex-filament, and mKdV flows involve
tangents, curvature, and higher derivatives, their Hamiltonians admit
natural polygonal counterparts. For a discrete Hamiltonian \(\sfH\),
we therefore define its polygonal vector field by
\begin{equation}
\label{eq:DefHamitlonianFlowDiscrete}
    \OmegaMWD_\gamma X_\sfH=d\sfH_\gamma .
\end{equation}

The discrete Marsden--Weinstein form does not exactly reproduce the reparametrization kernel of its smooth counterpart. 
Its near-kernel \(\cR_\gamma\) makes~\teqref{eq:DefHamitlonianFlowDiscrete} ill-conditioned along its small singular modes. These modes stay separated from the rest of the spectrum under refinement (\figref{fig:SpectrumJ}).
For the tangent flow, we select a distinguished direction associated with this near-kernel. For the vortex-filament flow, we remove \(\cR_\gamma\), while for the mKdV flow, we impose edge-length preservation in a weighted least-squares solve (Sections~\ref{sec:TangentFlow}--\ref{sec:mKdVFlow}). These choices make the corresponding discrete vector fields numerically tractable.

The next member of the hierarchy, generated by the bending energy,
lies outside this intrinsic construction because it requires an
auxiliary curvature discretization.

\subsubsection{Tangent flow}
\label{sec:TangentFlow}
For a constant Hamiltonian, the Hamiltonian equation reduces to
\begin{equation}
\label{eq:HamiltonianTangentFlow}
    \OmegaMWD_\gamma X_0=0 .
\end{equation}
Since this homogeneous equation does not canonically select a nonzero
discrete field, we impose unit norm and infinitesimal edge-length
preservation and minimize its residual. Let \(\cC_\gamma\) denote the
edge-length differential. We define
\begin{equation*}
    X_0
    \coloneqq
    \argmin_{\substack{
        X\in\ker\cC_\gamma\\
        \|X\|_{\sfM_\gamma}=1}}
    \|\OmegaMWD_\gamma X\|_{\sfM_\gamma^{-1}}^2 .
\end{equation*}
Here \(\sfM_\gamma\) represents the \(L^2\) metric from
\teqref{eq:FEMassMatrix}, and \(\sfM_\gamma^{-1}\) represents its dual
metric on covectors. Thus \(X_0\) is the unit edge-length-preserving
field of minimal Marsden--Weinstein residual.

\begin{figure}[t]
    \centering
    \includegraphics[width=\columnwidth]
    {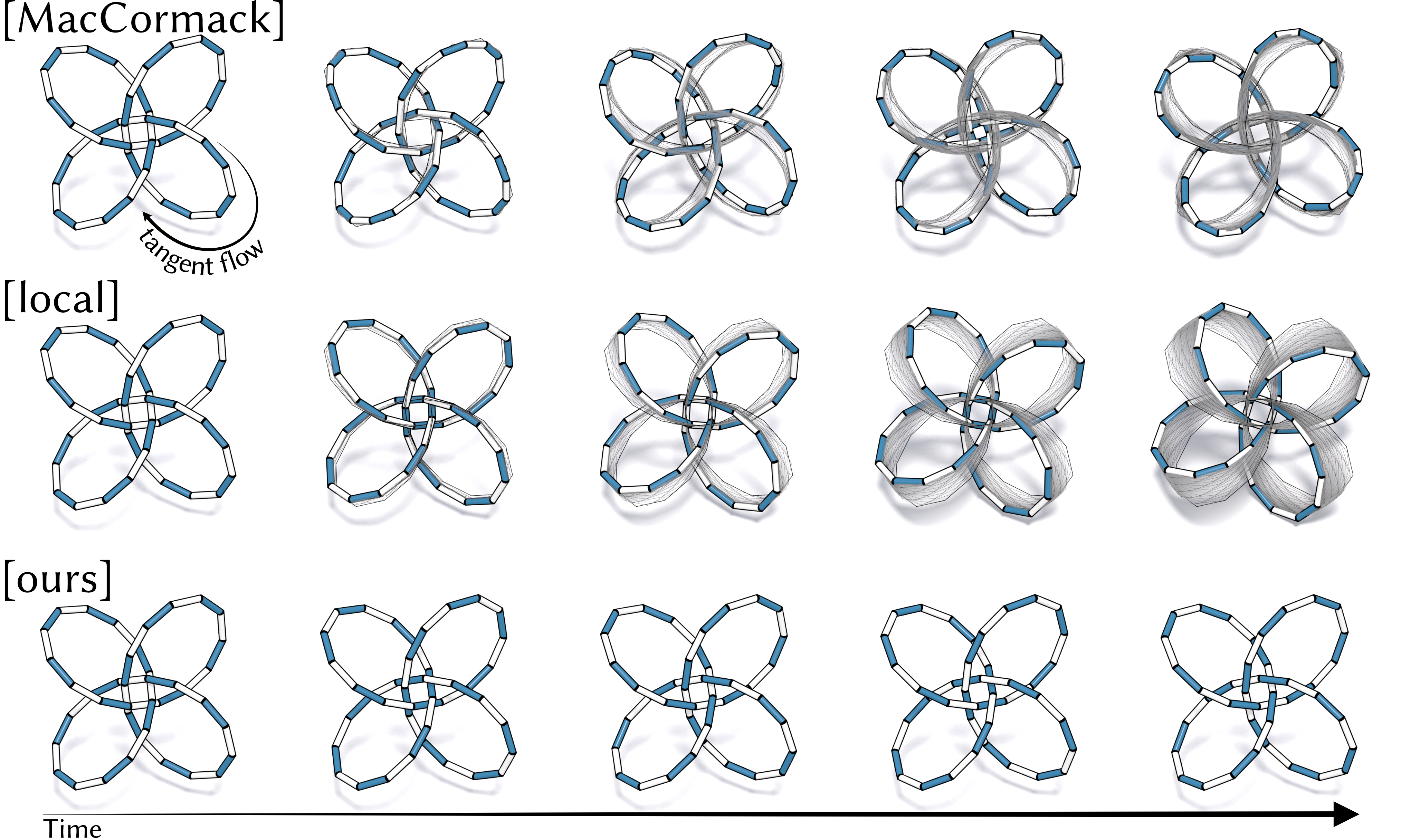}
    \caption{On a low-resolution fourfold-symmetric elastica, MacCormack advection visibly deforms the curve, while both it and the local M\"obius flow accumulate drift. The Hamiltonian tangent flow preserves the shape with substantially less long-time drift.}
    \label{fig:TangentFlow_4FoldSymmetry_Comparison}
\end{figure}
Equivalently, if \(Q\) spans \(\ker\cC_\gamma\) and \(X=Qb\), then
\(b\) is a generalized eigenvector corresponding to the smallest
eigenvalue \(\nu\):
\begin{equation*}
    Q^\top\Omega^\top\sfM_\gamma^{-1}\Omega Q\,b
    =
    \nu\,Q^\top\sfM_\gamma Q\,b .
\end{equation*}
We normalize \(X=Qb\) by \(\|X\|_{\sfM_\gamma}=1\) and fix its sign by
positive average alignment with the oriented polygon.

The construction requires no vertex tangents and is global, since
\(X_0\) depends on the full matrix \(\Omega\), the edge-length
constraints, and the chosen metric rather than on a local vertex stencil.

\subsubsection{Vortex-filament flow}
For the length Hamiltonian, \teqref{eq:DefHamitlonianFlowDiscrete}
becomes
\begin{equation}
    \label{eq:DefHamiltonianFlowLengthDiscrete}
    \OmegaMWD_\gamma X_{\sfL}=d\sfL_\gamma .
\end{equation}
The near-kernel \(\cR_\gamma\) makes direct inversion ill-conditioned,
so we solve on its spectral complement.

Let \(\Omega\) denote the matrix of \(\OmegaMWD_\gamma\), and let
\(\sfM_\gamma=K_\gamma K_\gamma^\top\) be a Cholesky factorization.
In mass coordinates, write
\begin{equation*}
    K_\gamma^{-1}\Omega K_\gamma^{-\top}
    =
    U\Sigma V^\top,
\end{equation*}
with singular values in increasing order. Discarding the first \(N\)
modes and denoting the retained blocks by
\(U^\perp\), \(\Sigma^\perp\), and \(V^\perp\) gives the quotient
vortex-filament field
\begin{equation*}
    X^\perp_{\sfL}
    =
    K_\gamma^{-\top}V^\perp
    (\Sigma^\perp)^{-1}
    (U^\perp)^\top
    K_\gamma^{-1}d\sfL_\gamma .
\end{equation*}
Here \((\cdot)^\perp\) denotes removal of \(\cR_\gamma\), not a
pointwise normal projection.

\begin{figure}[t]
    \centering
    \includegraphics[width=\columnwidth]
    {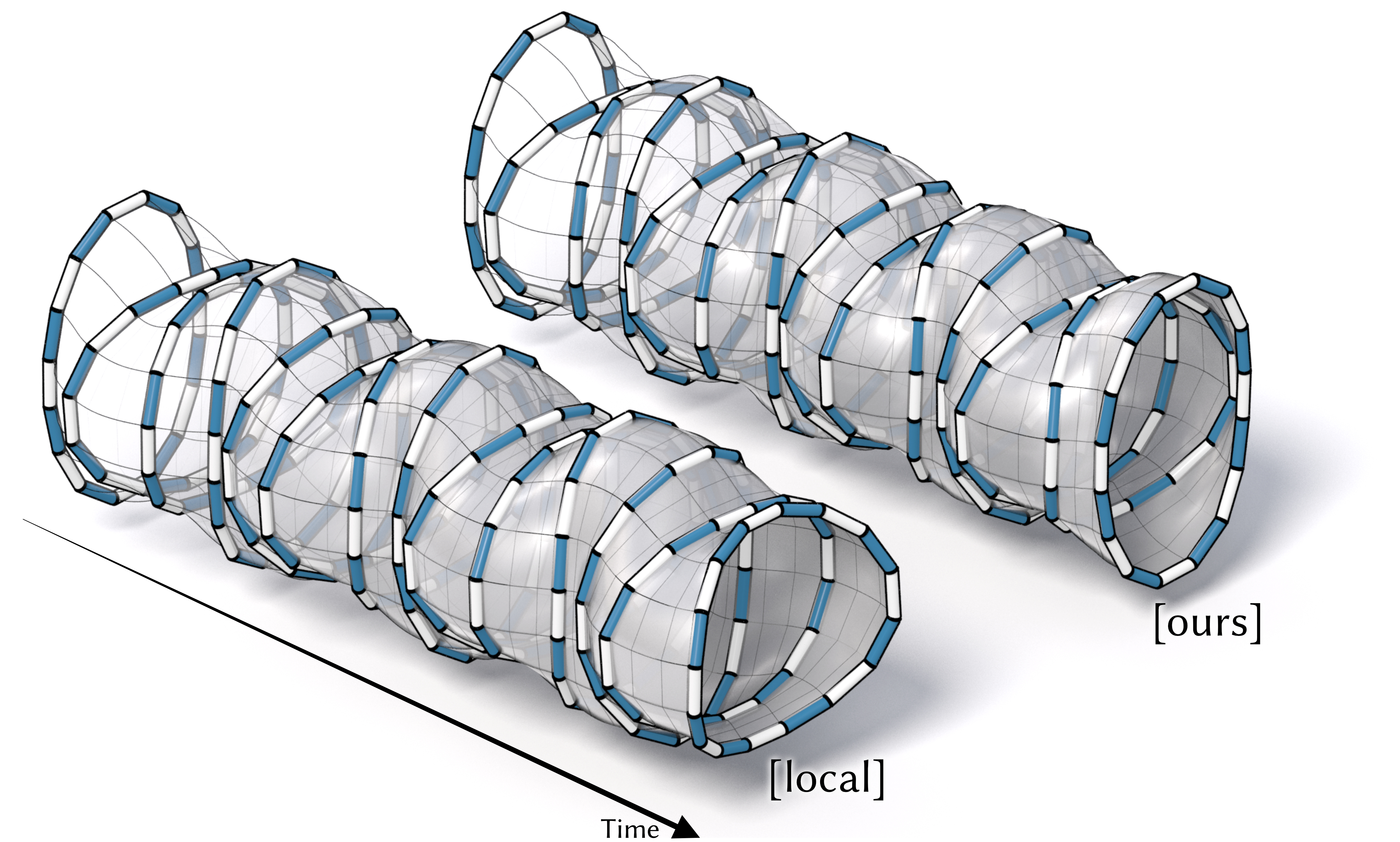}
    \caption{Evolution of an initially elliptical polygon under the vortex-filament flow. The motion of the Hamiltonian flow exhibits the expected binormal-curvature behavior and remains consistent with the local integrable model.}
    \label{fig:SmokeRingFlowEllipseComparison}
\end{figure}

This defines a discrete smoke-ring flow on polygonal shape space.
Removing \(\cR_\gamma\) suppresses reparametrization drift and isolates the shape-changing component up to the near-kernel residual. Like the
tangent flow, the construction is global: the velocity depends on the
full mass-coordinate matrix and its singular modes, rather than on a
local vertex stencil.

For visualization and time stepping, \(X^\perp_{\sfL}\) may be
normalized to unit \(\sfM_\gamma\)-norm. This changes only the time
parametrization, not the trajectory in shape space.

\subsubsection{mKdV flow}
\label{sec:mKdVFlow}
The holonomy \(\Phi\) is the last Hamiltonian in the hierarchy naturally
defined on polygons. Its smooth Hamiltonian field is the geometric mKdV
flow, the highest-order flow considered here. Following
\citet{Bergou:2008:DER}, we define polygonal holonomy by discrete parallel
transport. Assuming adjacent edges are not antiparallel, a normal vector
is transported across each vertex by the minimal rotation mapping the
incoming unit edge direction to the outgoing one. Equivalently, this is Levi-Civita parallel transport along the
shorter great-circle arc joining the two edge directions on \(S^2\),
making it the canonical discrete counterpart of Bishop transport. Composing these rotations
and comparing the transported normal with the terminal reference normal
defines
\(
    \Phi\in\RR/2\pi\ZZ.
\)
For closed polygons, the endpoint reference normals are identified. For
quasi-periodic polygons, the terminal reference is obtained from the initial
one by the rotational part of the monodromy. Changing the references shifts
\(\Phi\) only by a constant, so \(d\Phi_\gamma\) is
reference-independent~\cite{Pinkall:2024:DGF}.

We define the discrete mKdV field by the edge-length-preserving weighted
least-squares problem
\begin{equation*}
    X_\Phi
    \coloneqq
    \argmin_{X\in\ker\cC_\gamma}
    \left\|
        \OmegaMWD_\gamma X-d\Phi_\gamma
    \right\|_{\sfM_\gamma^{-1}}^2 .
\end{equation*}
Using a basis of \(\ker\cC_\gamma\), we eliminate the constraint and solve
the reduced weighted least-squares problem directly, without forming normal
equations. For integration, we normalize \(X_\Phi\) to unit
\(\sfM_\gamma\)-norm, fixing its speed without changing its direction.

\subsubsection{Discussion}
In their classical PDE form, the tangent, vortex-filament, and mKdV flows depend explicitly on derivatives of the curve, up to third order in the mKdV case. These derivatives have no canonical meaning on polygons, making the flows difficult to discretize and simulate directly. The Hamiltonian formulation bypasses this obstacle because the relevant Hamiltonians have natural polygonal counterparts. The same discrete Marsden--Weinstein structure that determines the isoperimetric constraints for discrete elastica in \secref{sec:Discretization} now converts the Hamiltonian differentials into equations for the corresponding polygonal vector fields.

\section{Results and Validation}
\label{sec:ResultsValidation}
We evaluate the discrete theory developed in
\secref{sec:Discretization} from three complementary perspectives.
First, we assess its qualitative fidelity by computing discrete elastica across a range of geometric settings and resolutions. Second, we quantitatively test the consistency of the variational formulation and the discrete Marsden--Weinstein structure and benchmark its
performance against a baseline formulation. Third, we explore its connections to dynamical systems through
Kirchhoff's analogy and Hamiltonian curve flows, as well as to surface
theory.

\subsection{Comparison of Optimization Approaches}
\label{sec:ResultsDiscreteElastica}
The isoperimetric formulation computes both closed and quasi-periodic discrete elastica without requiring a curvature discretization, material frame, rod energy, or underlying integrable system. Figures~\ref{fig:Teaser}, \ref{fig:RigidMotionToElastica},
\ref{fig:QuasiPeriodicFundamentalDomainsPureTranslation},
\ref{fig:QuasiPeriodicFundamentalDomains},
\ref{fig:ProjectedAreas}, \ref{fig:ResolutionAblationClosed},
\ref{fig:TangentFlow_4FoldSymmetry_Comparison},
\ref{fig:SmokeRingFlow_ResolutionAblation},
and~\ref{fig:WillmoreSpheres} span a range of resolutions and monodromy types. Since the monodromy enters only through the constraint map, the same augmented Lagrangian solver handles both closed and quasi-periodic curves, with the fundamental-domain vertex positions as optimization variables.

All experiments were implemented in~\textsc{Matlab}~\cite{Matlab:2025:OT}
and run on a MacBook Pro with an Apple M1 Max processor. 

\paragraph{Convergence} We observe convergence under refinement at two levels. First, our curvature-free discrete elastica converge quadratically to the smooth reference solution, as expected for piecewise-linear finite elements (\figref{fig:ConvergenceAndConditioning}). Second, for polygonal curve theory in general, the eigenvalues of the \(L^2\)-representative
\(\sfJ_\gamma\) of the discrete Marsden--Weinstein form separate into \(N\)
eigenvalues near \(0\) and two normal branches approaching \(\pm i\)
(\figref{fig:SpectrumJ}).

\begin{figure}[b]
    \centering
    \includegraphics[width=\linewidth]
    {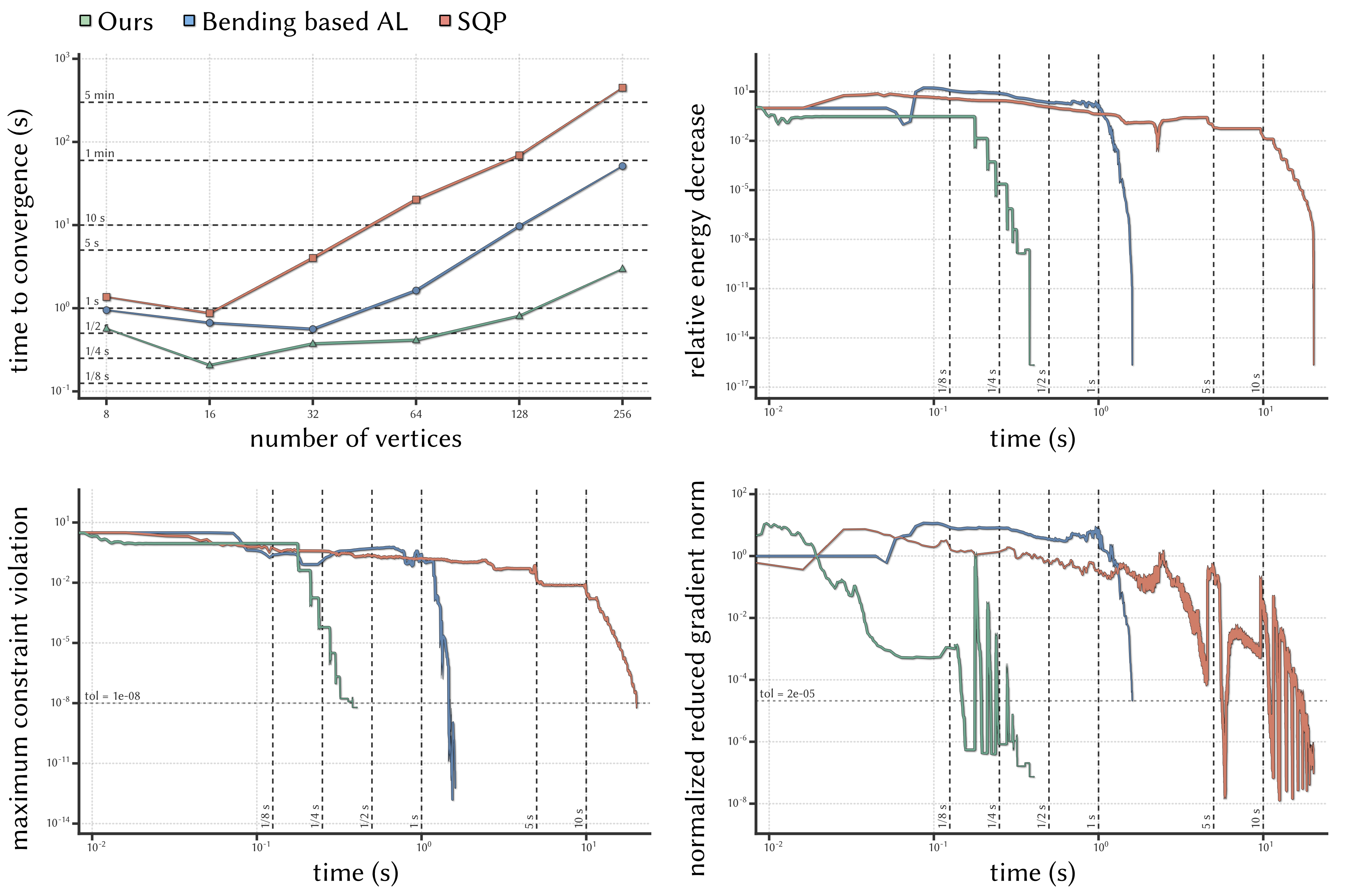}
    \caption{The isoperimetric formulation is solved with an augmented
    Lagrangian method and compared to bending-energy baselines. The plots
    report runtime under refinement, as well as energy decrease,
    constraint violation, and first-order optimality for an example with
    \(N=64\) vertices.}
    \label{fig:ConvergeceComparison}
\end{figure}
\paragraph{Performance.}
The computational advantage of the isoperimetric
formulation is already visible in its conditioning
(\figref{fig:ConvergenceAndConditioning}), which compares the reduced
Hessians of our formulation and the bending-energy baseline on the same
closed elastica under refinement. Here, reduced means the second variation
restricted to the constraint tangent space, with the remaining degenerate
directions removed. No solver parameters or tolerances enter, so the
comparison isolates the two variational formulations.

\figref{fig:ConvergeceComparison} reports the runtime under refinement of
\algref{alg:SolveDiscreteIsoperimetricProblem}, together with energy
decrease, constraint violation, and first-order optimality over the
runtime. We compare our augmented Lagrangian solver for the isoperimetric
formulation against two baselines based on the classical bending-energy
formulation. For each resolution \(N\), all methods are initialized
independently from the same circle with small random perturbations of the
vertex positions, use identical solver tolerances, and are supplied with
analytic gradients for the objective and all constraints. Thus, every
resolution is solved from scratch rather than warm-started from a
preceding solution. The isoperimetric formulation minimizes \(\sfL\) at
prescribed \(\sfA=\sfV=(0,0,\pi)\), with convergence histories shown for
\(N=64\) and \(g=\id\). For the classical formulation, we discretize
bending energy following the discrete elastic rods model of
\citet{Bergou:2008:DER} and impose matching constraints on length and
holonomy \(\Phi=\pi\), chosen so that the same configuration is
stationary. We solve this problem using either an analogous augmented
Lagrangian method or the SQP solver provided by Matlab's \texttt{fmincon}.

Even with a standard augmented Lagrangian
implementation~\cite{Nocedal:2006:NO}, the isoperimetric formulation
shows favorable convergence and scaling under refinement, consistent
with its lower-order variational structure and the absence of curvature
estimates. Under this uniform cold-start protocol, all but one of the
reported isoperimetric solves finish in under one second, amounting to
near-real-time performance for these problem sizes even without warm
starts. A faster implementation instead warm-starts each solve from the preceding solution, yielding substantially shorter runtimes in practice. These results are obtained without explicitly removing all
gauge freedom, leaving residual indeterminacies that can complicate the
numerical solve.

\begin{figure}[b]
    \centering
    \includegraphics[width=\columnwidth]{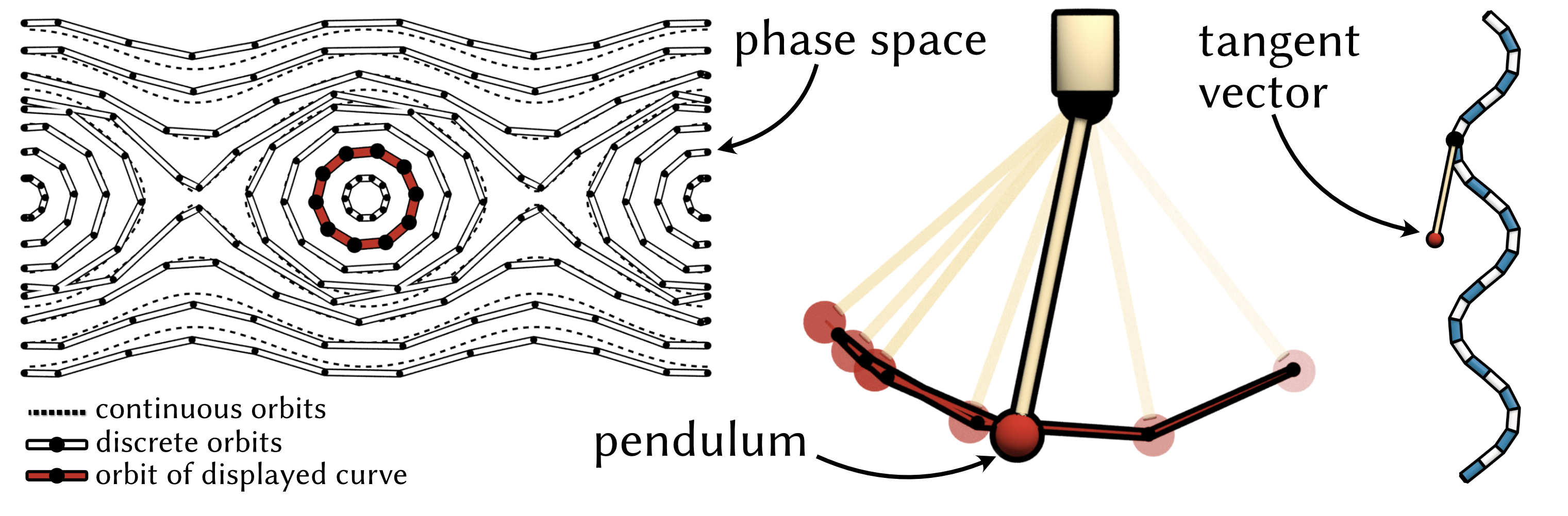}
    \caption{Solving the discrete isoperimetric problem with prescribed quasi-periodicity recovers the discrete Kirchhoff picture without integrating a pendulum equation. The edge tangents form a discrete orbit in the corresponding pendulum phase portrait.}
    \label{fig:PendulumPhaseSpace}
\end{figure}
\subsection{Connections to dynamical systems}
\label{sec:ResultsDynamicalSystems}
As throughout the paper, elastica are closely connected to several classical dynamical systems. Specifically, we use the Kirchhoff analogy to construct discrete pendulum dynamics from our discrete elastica and illustrate Hamiltonian curve flows generated by the discrete Marsden--Weinstein form.

\subsubsection{Kirchhoff analogy}
\label{sec:ResultsKirchhoffAnalogy}
The classical Kirchhoff analogy identifies elastic curves with spinning-top dynamics, whose reduced motion is described by a pendulum equation. Figs.~\ref{fig:ElasticFromPendulum}~and~\ref{fig:PendulumPhaseSpace} show that this picture remains visible
 in our discrete construction. The curves are obtained by solving the discrete isoperimetric problem with prescribed quasi-periodicity. Their edge tangents form a discrete sequence on the unit sphere whose phase portrait follows the corresponding pendulum orbit. The monodromy enforces exact closure or quasi-periodicity of this discrete orbit, so the phase portrait has no drift from time stepping. Thus the pendulum structure is recovered directly from the polygonal elastica solve, without a separate pendulum integrator.

\subsubsection{Hamiltonian flows}
\label{sec:ResultsHamiltonianFlows}
We compare the Hamiltonian flows induced by the discrete Marsden--Weinstein form to local and sampling-based flow discretizations. 
\begin{figure}[t]
    \centering
    \includegraphics[width=\columnwidth]{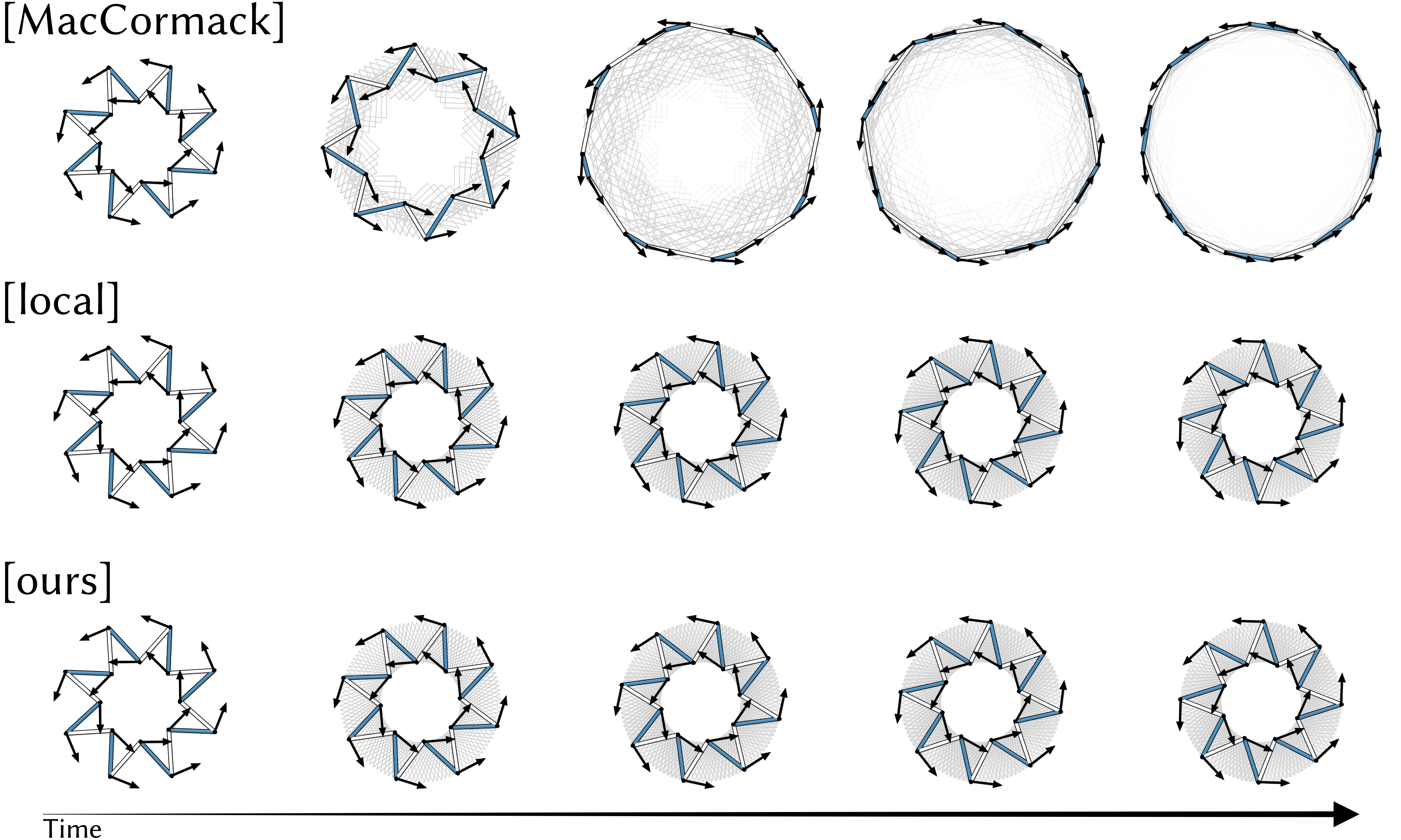}
    \caption{On a planar star-shaped curve, the Hamiltonian tangent flow agrees closely with the local M\"obius flow and preserves the shape, while MacCormack advection visibly deforms the curve over time.}
    \label{fig:TangentFlow_StarSHape_Comparison}
\end{figure}

\paragraph{Tangent flow.}
For tangential motion, a direct approach treats the polygon as sampled
data and solves the advection equation
\begin{equation*}
\partial_t\gamma-\partial_x\gamma=0
\end{equation*}
on the vertex grid, for example with semi-Lagrangian,
Lax--Friedrichs, Lax--Wendroff, or MacCormack advection. These schemes
are simple, but depend on the sampling and do not define an intrinsic
polygonal flow.

A geometric alternative comes from the integrable geometry of discrete
curves~\cite{Doliwa:1999:GDC,Bobenko:2000:DTL}. For an arc-length polygon
with unit edges
\(\vec e_i=\vec\gamma_{i+1}-\vec\gamma_i\), a M\"obius-geometric tangent
flow is
\begin{equation*}
\dot{\vec\gamma}_i
=
\frac{\vec e_{i-1}+\vec e_i}
{1+\langle\vec e_{i-1},\vec e_i\rangle}.
\end{equation*}
This flow is local, geometric, and edge-length preserving, but may
exhibit drift over long time horizons.

Our tangent flow is instead selected spectrally as the
edge-length-preserving direction of smallest Marsden--Weinstein
residual. Its reparametrization-like character is encoded in the
near-kernel structure of the discrete two-form. In the
fourfold-symmetric example of
\figref{fig:TangentFlow_4FoldSymmetry_Comparison}, our flow exhibits
substantially less drift over long time scales, even at low resolution.
For the planar star-shaped curve in
\figref{fig:TangentFlow_StarSHape_Comparison}, it agrees closely with
the local M\"obius flow and keeps the shape nearly unchanged, while
MacCormack advection visibly smooths the curve over time. On the generic
planar bunny in \figref{fig:TangentFlow_PlanarBunny_Comparison}, the
Hamiltonian tangent flow again preserves the shape more faithfully over
long integrations.

\begin{figure}[t]
    \centering
    \includegraphics[width=\columnwidth]{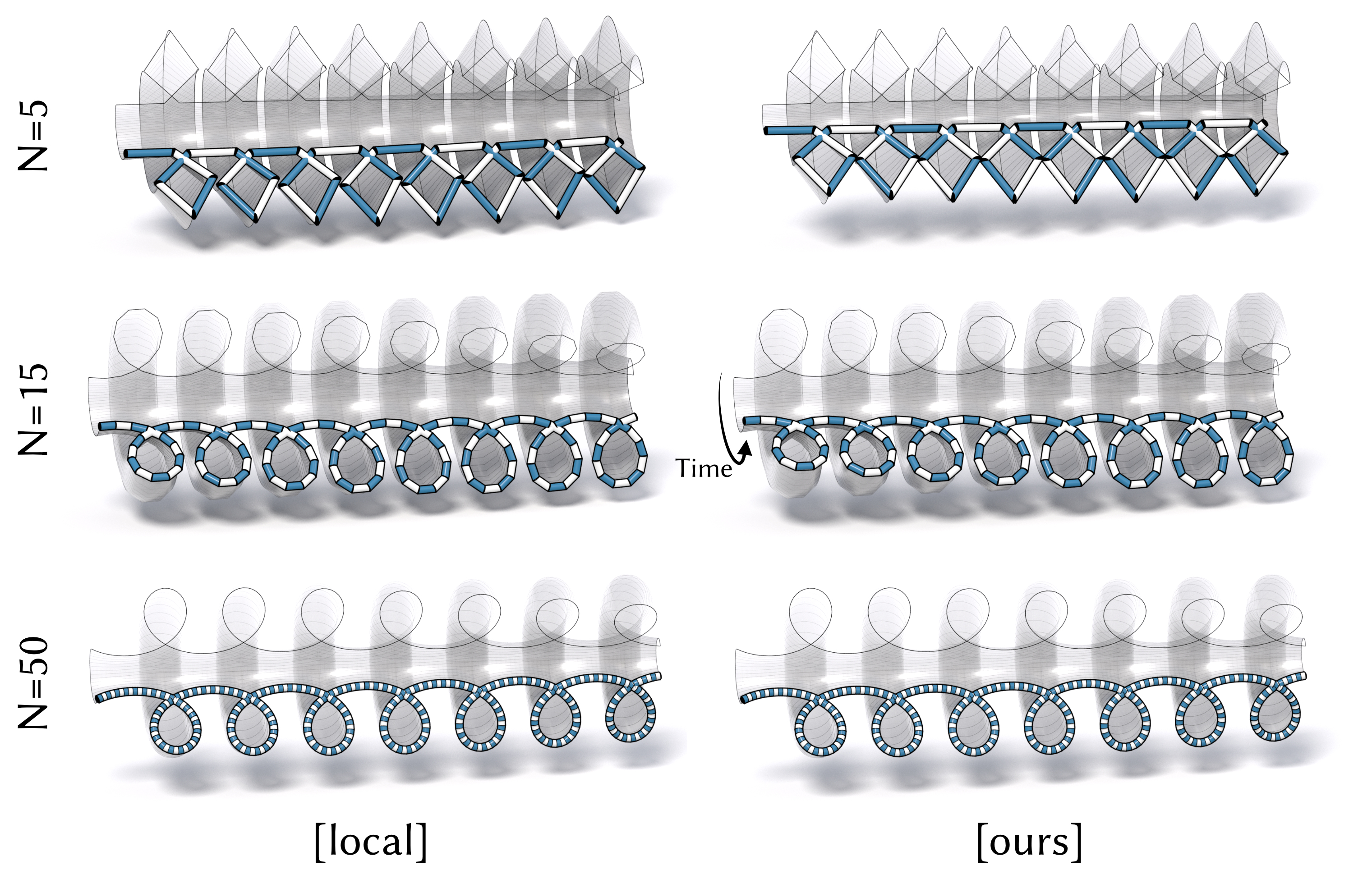}
    \caption{Since our theory is formulated for quasi-periodic polygonal curves, the Hamiltonian construction applies to both closed curves and open curves with prescribed monodromy. In particular, the evolution of open discrete elastica under the resulting vortex-filament flow is close to a rigid body motion. Since the construction depends only on the \(N\) fundamental-domain vertices, the flow can efficiently be computed on a single fundamental piece and extended indefinitely by the monodromy.}
    \label{fig:SmokeRingFlow_ResolutionAblation}
\end{figure}

\paragraph{Vortex-filament flow.}
For the vortex-filament flow, the local integrable discretization is the
discrete binormal motion
\begin{equation*}
\dot{\vec\gamma}_i
=
\frac{\vec e_i\times\vec e_{i-1}}
{1+\langle\vec e_i,\vec e_{i-1}\rangle},
\end{equation*}
again for polygons with unit edge lengths
\cite{Doliwa:1999:GDC,Bobenko:2000:DTL,Hoffmann:2008:DHS}. This flow is local and integrable, but requires arc-length normalization and becomes singular when adjacent edges are antiparallel. Doubly discrete vortex-filament models based on Darboux or B\"acklund transformations
also preserve integrability, but require a choice of transformation parameters together with appropriate closure conditions~\cite{Pinkall:2007:DSR,Hoffmann:2008:DHS}.

Our vortex-filament flow is generated by the discrete length Hamiltonian through the discrete Marsden--Weinstein form. For elastic curves, the smooth theory predicts rigid-body motion up to reparametrization. The evolution in \figref{fig:SmokeRingFlow_ResolutionAblation} stays close
to such a motion and agrees closely, at the shown scale, with the local integrable discretization. For more generic curves, the same construction exhibits the expected binormal-curvature behavior, as shown in \figref{fig:SmokeRingFlowEllipseComparison}.

\begin{figure}[b]
    \centering
    \includegraphics[width=\columnwidth]{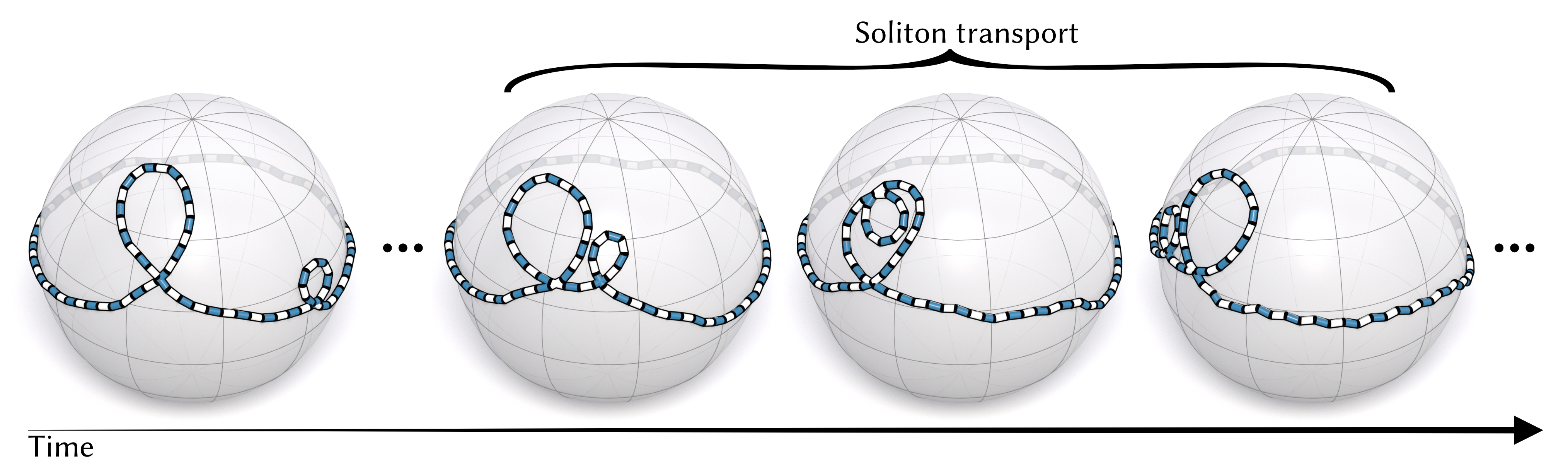}
    \caption{In the continuous theory, the mKdV flow of a spherical space curve is known to stay on the sphere and to 
    admit soliton solutions, in which a localized shape feature travels along the curve without changing form. 
    Our discrete Hamiltonian counterpart reproduces this behavior over long time horizons 
    without prescribing a separate integrable update.
    }
    \label{fig:mKdVSolitonOnSphere}
\end{figure}
\paragraph{mKdV flow}
Finally, we test two qualitative signatures of the smooth mKdV flow. The flow preserves spherical curves and admits soliton solutions that transport localized geometric features coherently~\cite{Langer:1999:RIC}. Both features are visible over long time horizons in the simulation shown in \figref{fig:mKdVSolitonOnSphere}. This is particularly striking because the smooth mKdV flow contains the third arclength derivative \(\gamma'''\), which has no classical counterpart on a polygonal curve. Nevertheless, the holonomy angle \(\Phi_\gamma\), the discrete Marsden--Weinstein form, and the edge-length constraint determine a polygonal vector field without requiring a separate discretization of
this third-order term.

These experiments highlight the tradeoff. Local formulations are explicit, efficient, and often integrable, but require flow-specific discretization choices. Our construction is global but uniform: the
Hamiltonian, the discrete Marsden--Weinstein form, and the relevant constraints determine the vector field. 

\subsubsection{Numerical consistency}
\label{sec:ResultsNumericalConsistency}
In the smooth hierarchy, the tangent and vortex-filament flows preserve \(\pzcL\), \(\pzcA\), and \(\pzcV\). For the tangent flow, this follows from repara\-metrization invariance. For the vortex-filament flow, it follows from the Hamiltonian structure and the associated Noether quantities.

\begin{wrapfigure}[13]{r}{0.435\columnwidth}
    \vspace{-.75em}
  {\hspace{-20pt}
    \centering
    \includegraphics[width=.5\columnwidth]
    {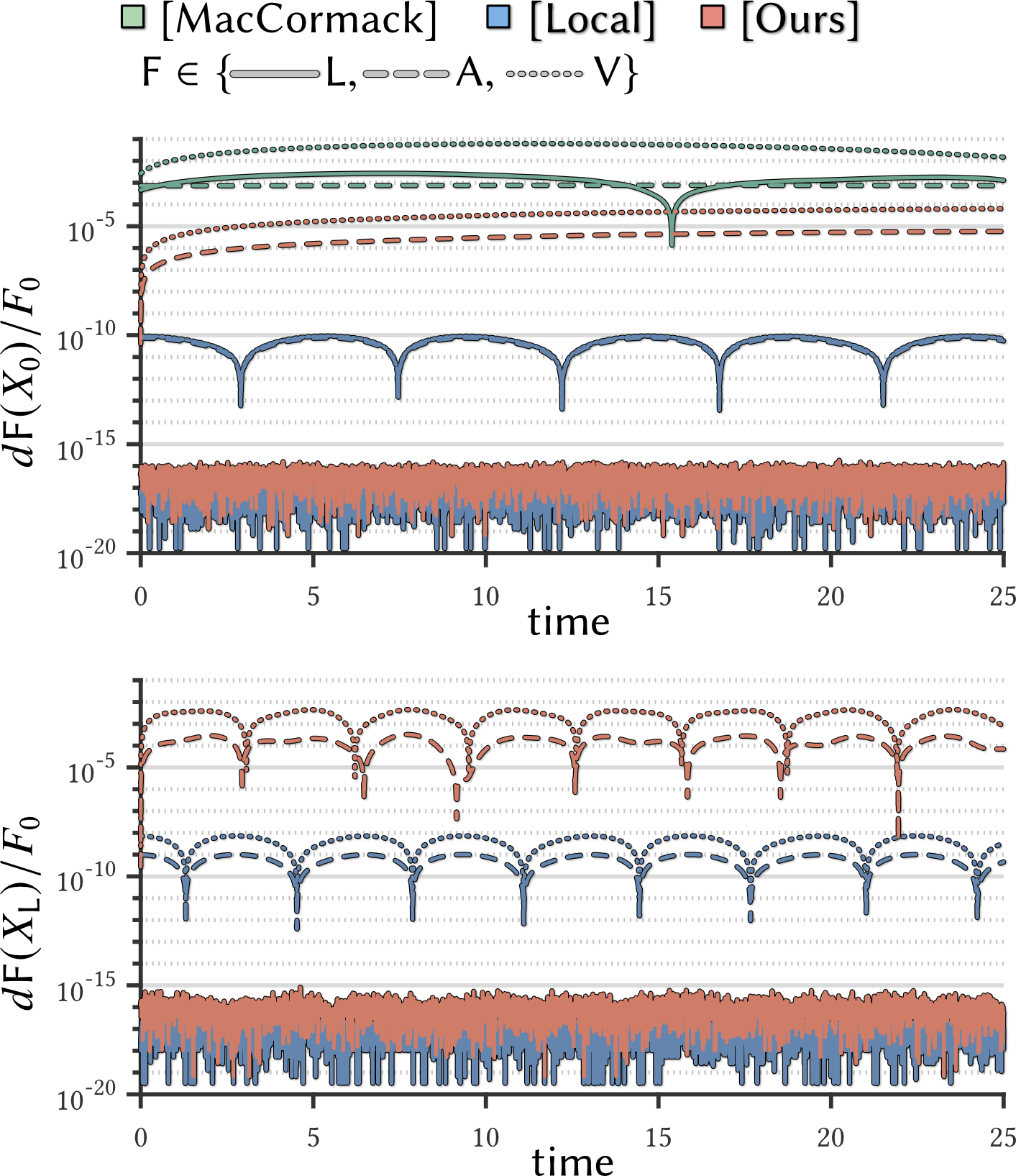}
  }
\end{wrapfigure}
Exact preservation need not survive discretization. The local M\"obius and binormal flows preserve the corresponding invariants built into their integrable constructions. For our Hamiltonian flows, the preservation defect reflects the failure of
the finite-element discretization to reproduce the exact smooth kernel.

The inset shows the normalized instantaneous rates of change of \(\sfL\), \(\sfA\), and \(\sfV\) along the tangent and vortex-filament flows. The local formulations preserve their corresponding invariants to numerical precision, while our flows exhibit small residual rates consistent with the near-kernel mechanism. For the tangent flow, MacCormack advection produces substantially larger rates of change.

\subsubsection{Limitations}
These experiments also illustrate a limitation of the Hamiltonian-flow
formulation. Unlike explicit local models for flows on discrete space
curves, the proposed vector fields require a global linear-algebra
problem involving the discrete Marsden--Weinstein form, the chosen
metric, and, where applicable, the edge-length constraints. This
provides a common framework for the different Hamiltonians considered
here, but is considerably more expensive than explicit local updates.

The resulting vector fields can also depend on how the near-kernel modes
are selected or removed, especially at low resolution or when the small
eigenvalues are tightly clustered. In our experiments, this sensitivity
decreases under refinement. We therefore view these flows primarily as
a structure-driven construction and a validation of the discrete
Marsden--Weinstein geometry. They are not intended to replace
specialized local integrable schemes when such schemes are available.\\

\begin{figure}[h]
    \centering
    \includegraphics[width = \columnwidth]{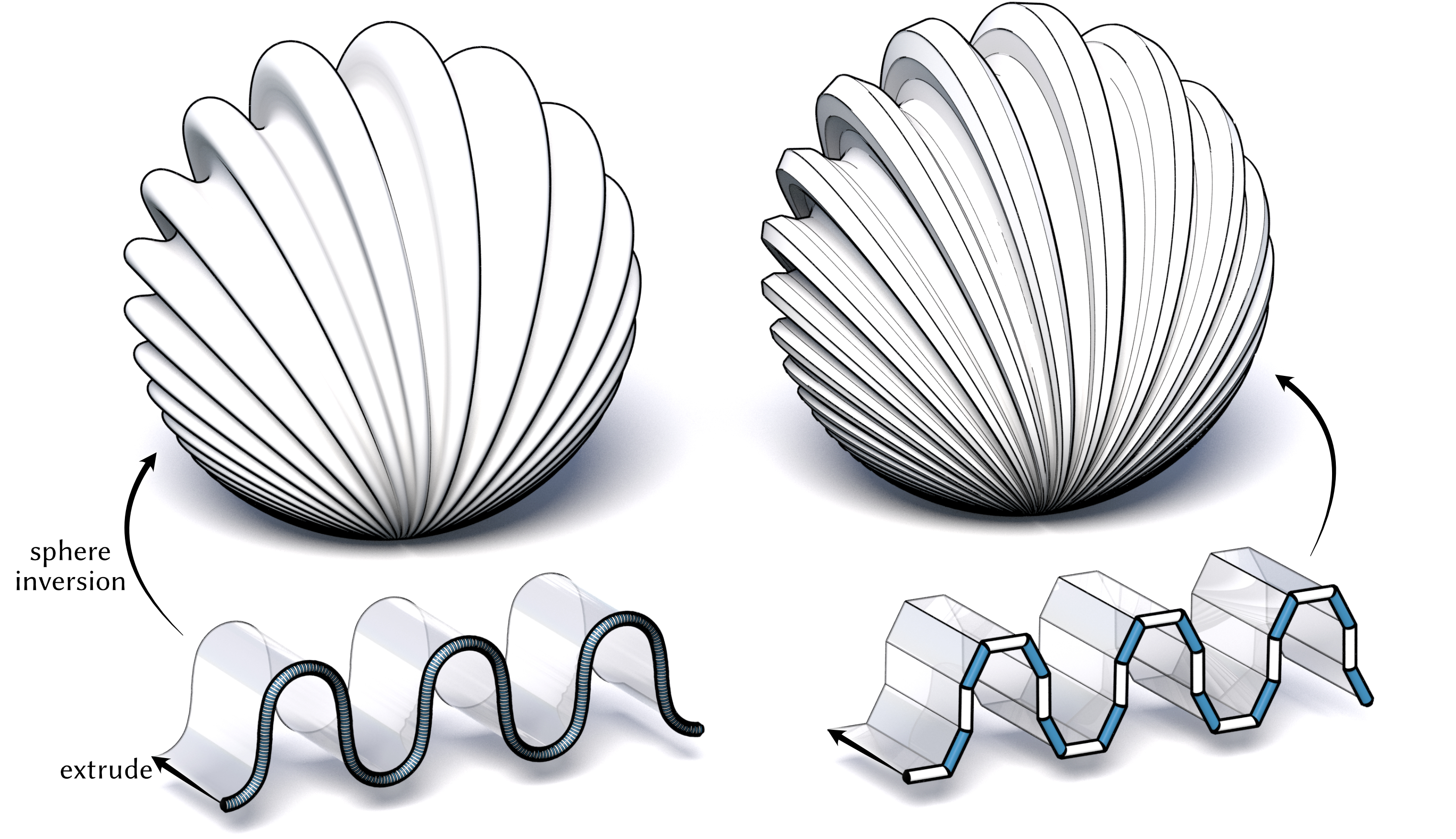}
    \caption{A cylinder over a planar free elastic curve is a Willmore surface, a property which is preserved under M\"obius transformations~\cite{Pinkall:2024:DGF}. Applying the same construction to a smooth planar free elastica and to its discrete counterpart gives surfaces with strong resemblance.}
    \label{fig:WillmoreSpheres}
\end{figure}
\subsection{Connections to surface theory}
\label{sec:ResultsSurfaceTheory}
Elastic curves are also connected to the theory of Willmore
surfaces~\cite{Pinkall:1985:HTS,Bohle:2008:CWS,Heller2014SpaceForms,
Pinkall:2024:DGF}. A cylinder over a planar free elastic curve is a Willmore surface~\cite{Pinkall:2024:DGF}.
Since the Willmore property is M\"obius invariant, a sphere inversion
maps such a cylinder to another Willmore surface.

To construct~\figref{fig:WillmoreSpheres}, we start from a smooth planar
free elastica and a discrete counterpart obtained from the
isoperimetric optimization. For each generator, we form a ruled cylinder
by translating the curve in a fixed normal direction and connecting
corresponding samples. We then apply the same sphere inversion pointwise
to the smooth and polygonal cylinders. The lower row of
\figref{fig:WillmoreSpheres} shows the two generators and their
extrusions, while the upper row shows the inverted surfaces.

Although this experiment neither defines nor evaluates a discrete
Willmore energy, the visual agreement suggests that the curve-level
discretization retains sufficient geometric structure to serve as input
for analogous (semi-)discrete surface constructions.

\section{Discussion \& Future Work}
We have presented a discrete theory of elastic curves based on the isoperimetric characterization. The definition uses only polygonal length and the monodromy-compatible Noether charges obtained from the area and volume vectors, all of which are well defined directly on polygonal curves. This yields a variational, curvature-free discretization without auxiliary choices such as material frames, rod energies, or underlying integrable systems.

A central advantage of the isoperimetric characterization is its lower differential order. Instead of minimizing a bending energy involving second derivatives and fourth-order stationarity equations, we optimize a first-order functional under low-order constraints. This produces simpler finite-dimensional constrained problems in the vertex positions and avoids estimating curvature from polygonal data. Even at very coarse resolutions, the polygonal solutions exhibit the desired qualitative
characteristics and closely resemble their smooth counterparts. Under refinement they show the expected quadratic convergence for piecewise-linear elements, and the spectrum of the discrete Marsden--Weinstein form splits into \(N\) eigenvalues near \(0\) and two branches approaching \(\pm i\).

The construction also retains several classical interpretations of elastica. The Kirchhoff analogy appears through the discrete tangent orbits, and the Marsden--Weinstein form generates Hamiltonian polygonal analogues of the tangent, vortex-filament, and mKdV flows. M\"obius images of cylinders over planar free elastica give visually consistent polygonal analogues of Willmore surface constructions. Thus the same discretization keeps the variational, mechanical, Hamiltonian, and surface-theoretic viewpoints connected.

Several questions remain. The splitting of the spectrum of \(\sfJ_\gamma\) into \(N\) eigenvalues near \(0\) and two branches approaching \(\pm i\)
awaits a rigorous proof, together with its dependence on element order, sampling, and mass metric. Such a result would in turn control the convergence of the discrete Hamiltonian vector fields built on this spectral gap. 
The discrete Marsden--Weinstein form may support Hamiltonian constructions beyond the low-order flows considered here. Finally, the surface examples suggest a discrete Willmore theory whose variational meaning remains to be made precise.

\bibliographystyle{ACM-Reference-Format}
\bibliography{Reference}


\appendix
 \section{Normal Form of the Monodromy}
 \label{app:NormalForm}

\begin{lemma}
\label{lem:NormalForm}
For any \(g=(R,\vec b)\in\SE(3)\) with \(R\neq\Id\), there exists
\(\vec c\in\RR^3\) such that, in the shifted coordinates
\(\tilde{\vec x}=\vec x-\vec c\), the monodromy takes the form
\(\tilde{\vec x}\mapsto R\tilde{\vec x}+\tilde{\vec b}\), where
    \begin{equation}
        \begin{cases}
            R\tilde{\vec b}=\tilde{\vec b}, & \text{in }3D,\\
            \tilde{\vec b}=0, & \text{in }2D.
        \end{cases}
    \end{equation}
\end{lemma}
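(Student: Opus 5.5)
We compute how the monodromy looks after shifting the origin, and then choose the shift to cancel everything except the part of \(\vec b\) that \(R\) fixes. If \(\tilde{\vec x}=\vec x-\vec c\), then \(g(\vec x)=R\vec x+\vec b\) becomes
\begin{equation*}
    \tilde{\vec x}\mapsto R(\tilde{\vec x}+\vec c)+\vec b-\vec c
    = R\tilde{\vec x}+\tilde{\vec b},
    \qquad
    \tilde{\vec b}=\vec b-(\Id-R)\vec c .
\end{equation*}
So the rotational part is unchanged, and the translational part changes by an arbitrary element of \(\im(\Id-R)\). The task therefore reduces to a linear algebra statement about \(\Id-R\) for \(R\in\SO(n)\), \(n\in\{2,3\}\), with \(R\neq\Id\).

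The key step is the orthogonal decomposition \(\RR^n=\ker(\Id-R)\oplus\im(\Id-R)\). Since \(R\) is orthogonal, \((\Id-R)^\top=\Id-R^{-1}=-R^{-1}(\Id-R)\). Hence \(\ker(\Id-R)^\top=\ker(\Id-R)\), and consequently
\begin{equation*}
    \im(\Id-R)=\ker\bigl((\Id-R)^\top\bigr)^\perp=\ker(\Id-R)^\perp .
\end{equation*}
Now decompose \(\vec b=\vec b_\parallel+\vec b_\perp\) with \(\vec b_\parallel\in\ker(\Id-R)\) and \(\vec b_\perp\in\im(\Id-R)\). Choose \(\vec c\) with \((\Id-R)\vec c=\vec b_\perp\), which is possible since \(\vec b_\perp\) lies in the image. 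Then \(\tilde{\vec b}=\vec b_\parallel\) satisfies \(R\tilde{\vec b}=\tilde{\vec b}\).

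It remains to identify \(\ker(\Id-R)\) in each dimension.
\begin{itemize}
    \item In 3D, \(R\neq\Id\) is a rotation by a nonzero angle about an axis \(\RR\vec s_g\). Its fixed space is exactly this line: the eigenvalue \(1\) is simple, because the other two eigenvalues are \(e^{\pm i\theta}\) with \(\theta\neq0\). Thus \(\tilde{\vec b}\parallel\vec s_g\), which is the screw form of \remref{rem:ScrewAxis}. The set of admissible \(\vec c\) is a line parallel to \(\vec s_g\), namely the screw axis.
    \item In 2D, a nontrivial rotation has no fixed nonzero vector. So \(\ker(\Id-R)=0\), the map \(\Id-R\) is invertible, and \(\vec c=(\Id-R)^{-1}\vec b\) gives \(\tilde{\vec b}=0\); the motion is a pure rotation about \(\vec c\).
\end{itemize}

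There is no real obstacle here. The only point needing care is the orthogonality of kernel and image of \(\Id-R\), which uses that \(R\) is normal (indeed orthogonal); for a general linear \(R\) this decomposition could fail. The simplicity of the eigenvalue \(1\) in 3D is standard for \(R\in\SO(3)\setminus\{\Id\}\).
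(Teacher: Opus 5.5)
Your proof is correct and follows essentially the same route as the paper. Both compute \(\tilde{\vec b}=\vec b-(\Id-R)\vec c\) and reduce the claim to the complementarity of \(\ker(\Id-R)\) and \(\im(\Id-R)\): the paper phrases this as \(\im(R-\Id)^2=\im(R-\Id)\), so that \((R-\Id)^2\vec c=-(R-\Id)\vec b\) is solvable, while you phrase it as the orthogonal splitting \(\RR^n=\ker(\Id-R)\oplus\im(\Id-R)\) and then solve \((\Id-R)\vec c=\vec b_\perp\). Your version is slightly more explicit: it justifies the complementarity from the orthogonality of \(R\), which the paper leaves implicit (a one-dimensional kernel alone would not suffice), and it identifies \(\tilde{\vec b}\) as the axial projection of \(\vec b\).
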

 
\begin{proof}
Setting \(\tilde{\vec b}=R\vec c+\vec b-\vec c\) and requiring \((R-\Id)\tilde{\vec b}=0\) gives \((R-\Id)^2
\vec c=-(R-\Id)\vec b\). For \(R\in\SO(3)\) with \(R\neq \Id\), the rotation axis spans
\(\ker(R-\Id)\), so \(\im(R-\Id)^2=\im(R-\Id)\). Since \(-(R-\Id)\vec b\in\im(R-\Id)\), a solution
\(\vec c\) exists, unique up to translation along the axis. In 2D, \(R-\Id\) is invertible
and \(\vec c=-(R-\Id)^{-1}\vec b\) is the unique solution.
\end{proof}

\section{Closed Forms for Axial Components}
\label{sec:AxialClosedForms}
When \(g\neq\id\) we write \(\vec b=\ell\vec s_g\) with \(R\vec s_g=\vec s_g\), and \(\vec\gamma^\perp=\vec\gamma-\langle\vec s_g,\vec\gamma\rangle\vec s_g\). The components of the completed vectors along the screw axis admit simple closed forms, on which several proofs below rely.

\begin{lemma}
\label{lem:AxialClosedForms}
For \(\gamma\in\cM_g\) with \(g\neq\id\), evaluated on any fundamental domain \(I=[t_0,\tau(t_0)]\),
\begin{equation*}
    \langle\vec s_g,\pzcA(\gamma)\rangle
    =\tfrac12\int_I\langle\vec s_g,\vec\gamma\times\vec\gamma'\rangle\,ds,
    \quad
    \langle\vec s_g,\pzcV(\gamma)\rangle
    =-\tfrac12\int_I|\vec\gamma^\perp|^2\,\langle\vec s_g,\vec\gamma'\rangle\,ds .
\end{equation*}
\end{lemma}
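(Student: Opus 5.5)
Both identities come from projecting the completed vectors \teqref{eq:AreaVectorCompleted} and \teqref{eq:VolumeVectorCompleted} onto \(\vec s_g\) and expanding pointwise in the decomposition \(\vec\gamma=\vec\gamma^\perp+z\,\vec s_g\), where \(z=\langle\vec s_g,\vec\gamma\rangle\). Throughout we use the normalization \(\vec b=\ell\vec s_g\). In the screw case \(R\neq\Id\) this comes from placing the origin on the screw axis (\lemref{lem:NormalForm}); in the pure translation case it holds with \(\vec s_g=\vec b/|\vec b|\) and \(\ell=|\vec b|\).

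\emph{Area.} The correction term \(-\tfrac12\vec b\times\vec\gamma(t_0)\) is perpendicular to \(\vec b\parallel\vec s_g\). Its projection onto \(\vec s_g\) therefore vanishes, and the area formula is immediate.

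\emph{Volume, pointwise step.} First rewrite the integrand as
\[
\langle\vec s_g,\vec\gamma\times(\vec\gamma\times\vec\gamma')\rangle
= z\langle\vec\gamma,\vec\gamma'\rangle-|\vec\gamma|^2\langle\vec s_g,\vec\gamma'\rangle
= z\,z'+\langle\vec\gamma^\perp,\vec\gamma^{\perp\prime}\rangle-(|\vec\gamma^\perp|^2+z^2)z'.
\]
Then eliminate \(z^2z'\), which is itself a derivative, and rewrite \(\langle\vec\gamma^\perp,\vec\gamma^{\perp\prime}\rangle=\tfrac12(|\vec\gamma^\perp|^2)'\). The plan is to express everything as \(-|\vec\gamma^\perp|^2z'\) plus total derivatives, and integrate by parts against \(z'\) so that the coefficient \(\tfrac13\) turns into the claimed \(\tfrac12\).

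\emph{Volume, boundary step.} The resulting endpoint terms use the monodromy relations \(z(\tau t_0)=z(t_0)+\ell\) and \(|\vec\gamma^\perp(\tau t_0)|=|\vec\gamma^\perp(t_0)|\); the latter holds because \(R\) is an isometry fixing \(\vec s_g\). These terms should cancel exactly against the projected completion term
\[
-\tfrac16\langle\vec s_g,\vec\gamma_0\times(\vec b\times\vec\gamma_0)\rangle=-\tfrac16\ell\,|\vec\gamma_0^\perp|^2,
\qquad \vec\gamma_0=\vec\gamma(t_0).
\]

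\emph{Main obstacle.} The difficulty is the bookkeeping in converting the cubic \(\tfrac13\) integrand into the quadratic \(\tfrac12\) form. Naive bookkeeping leaves a mismatch: an extra \(\tfrac16\int |\vec\gamma^\perp|^2 z'\). There are also leftover endpoint terms in \(z\): \(\tfrac16[z^2]\), \(-\tfrac19[z^3]\) and \(\tfrac16[z|\vec\gamma^\perp|^2]\). I would first redo the decomposition carefully, tracking the \(z\,\vec s_g\times\vec\gamma^{\perp\prime}\) cross terms that the pointwise step above drops. If a discrepancy genuinely remains, the fallback is to verify the identity via \(\tau\)-invariance: show that both sides have the same \(t_0\)-derivative, using \(z'\) and \(|\vec\gamma^\perp|\) evaluated at \(t_0\) and at \(\tau t_0\). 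Then check equality at a single convenient fundamental domain, for instance one chosen with \(z(t_0)=0\), by a direct computation.
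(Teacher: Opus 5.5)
Your area argument and your boundary data are correct and agree with the paper. The relations $z(\tau(t_0))=z(t_0)+\ell$ and $|\vec\gamma^\perp(\tau(t_0))|=|\vec\gamma^\perp(t_0)|$, together with the projected completion $-\tfrac16\ell|\vec\gamma^\perp(t_0)|^2$, are exactly what is needed.

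The gap is the volume pointwise step. It is wrong as written, and it is the sole source of the ``mismatch'' you report. In the second equality you replaced $z\langle\vec\gamma,\vec\gamma'\rangle$ by $zz'+\langle\vec\gamma^\perp,\vec\gamma^{\perp\prime}\rangle$, i.e.\ you dropped the factor $z$. A degree count exposes this: the left side is cubic in $\vec\gamma$, while $zz'$ and $\langle\vec\gamma^\perp,\vec\gamma^{\perp\prime}\rangle$ are quadratic. The extra $\tfrac16\int|\vec\gamma^\perp|^2z'$ and the endpoint terms $\tfrac16[z^2]$ and $-\tfrac19[z^3]$ are artifacts of this slip.

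Your diagnosis via dropped cross terms $z\,\vec s_g\times\vec\gamma^{\perp\prime}$ is off target. The bac-cab expansion $\langle\vec s_g,\vec\gamma\times(\vec\gamma\times\vec\gamma')\rangle=z\langle\vec\gamma,\vec\gamma'\rangle-|\vec\gamma|^2z'$ is exact. Consequently the proposal never actually proves the volume formula. The fallback does not rescue it either. Checking equality on one fundamental domain needs the very same identity. Choosing $z(t_0)=0$ simplifies nothing, because the relevant boundary term $\tfrac16\ell|\vec\gamma^\perp(t_0)|^2$ does not involve $z(t_0)$.

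The fix is one line. Since $\vec\gamma^\perp$ and $\vec\gamma^{\perp\prime}$ are orthogonal to $\vec s_g$, you have $z\langle\vec\gamma,\vec\gamma'\rangle=z^2z'+z\langle\vec\gamma^\perp,\vec\gamma^{\perp\prime}\rangle$. The $z^2z'$ terms therefore cancel pointwise, and
\[
\tfrac13\langle\vec s_g,\vec\gamma\times(\vec\gamma\times\vec\gamma')\rangle
=\tfrac13\bigl(\tfrac12 z\,(|\vec\gamma^\perp|^2)'-|\vec\gamma^\perp|^2z'\bigr)
=-\tfrac12|\vec\gamma^\perp|^2z'+\tfrac16\bigl(z\,|\vec\gamma^\perp|^2\bigr)'.
\]
This is precisely the paper's pointwise identity. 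Your intended integration by parts against $z'$ is what converts the $\tfrac13$ into $\tfrac12$. The only endpoint term is $\tfrac16\bigl[z\,|\vec\gamma^\perp|^2\bigr]_{t_0}^{\tau(t_0)}=\tfrac16\ell|\vec\gamma^\perp(t_0)|^2$, which your completion term cancels exactly. With this correction, your plan is the paper's proof.
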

\begin{proof}
For the area, the endpoint completion in \teqref{eq:AreaVectorCompleted} contributes \(-\tfrac12\langle\vec s_g,\vec b\times\vec\gamma(t_0)\rangle=0\) because \(\vec b\parallel\vec s_g\), which leaves the stated integral.

For the volume, \(\vec\gamma\times(\vec\gamma\times\vec\gamma')=\vec\gamma\langle\vec\gamma,\vec\gamma'\rangle-\vec\gamma'|\vec\gamma|^2\) and \(|\vec\gamma^\perp|^2=|\vec\gamma|^2-\langle\vec s_g,\vec\gamma\rangle^2\) give the pointwise identity
\begin{equation*}
    \tfrac13\langle\vec s_g,\vec\gamma\times(\vec\gamma\times\vec\gamma')\rangle
    =-\tfrac12|\vec\gamma^\perp|^2\,\langle\vec s_g,\vec\gamma'\rangle
    +\tfrac16\tfrac{d}{ds}\!\bigl(\langle\vec s_g,\vec\gamma\rangle\,|\vec\gamma^\perp|^2\bigr).
\end{equation*}
Integrating the total derivative over the fundamental domain \(I\), \(\tfrac16\langle\vec s_g,\vec\gamma\rangle\,|\vec\gamma^\perp|^2\big\vert_{t_0}^{\tau(t_0)}=\tfrac16\ell\,|\vec\gamma^\perp(t_0)|^2\), where we use \(\vec\gamma^\perp(\tau(t_0))=R\vec\gamma^\perp(t_0)\) and \(\langle\vec s_g,\vec\gamma(\tau(t_0))\rangle=\langle\vec s_g,\vec\gamma(t_0)\rangle+\ell\). The endpoint completion in \teqref{eq:VolumeVectorCompleted} cancels it exactly, since \(\langle\vec s_g,\vec\gamma(t_0)\times(\vec b\times\vec\gamma(t_0))\rangle=\ell\,|\vec\gamma^\perp(t_0)|^2\), leaving the stated integral.
\end{proof}

\section{Proof of \lemref{thm:AreaFundamentalDomainIndependent}}
\label{sec:AreaTransformation}
\begin{proof}
Since \(\tau\) is a translation, the integral of a \(\tau\)-periodic density over a fundamental domain \([t_0,\tau(t_0)]\) is independent of \(t_0\). We use this fact repeatedly, together with \(\vec\gamma'(\tau(t_0))=R\vec\gamma'(t_0)\).

If \(g=\id\), then \(\vec b=0\), there is no completion, and \(\pzcA(\gamma)=\tfrac12\oint\vec\gamma\times\vec\gamma'\) has a periodic integrand. Thus the full vector is independent of \(t_0\).

If \(R=\Id\) and \(\vec b\neq0\), differentiating the completed vector gives
\begin{equation*}
    \tfrac{d}{dt_0}\pzcA(\gamma)
    =\tfrac12\bigl(\vec\gamma(\tau(t_0))-\vec\gamma(t_0)\bigr)\times\vec\gamma'(t_0)
    -\tfrac12\vec b\times\vec\gamma'(t_0)
    =0,
\end{equation*}
where we use \(\vec\gamma(\tau(t_0))-\vec\gamma(t_0)=\vec b\). Thus the full vector is again independent of \(t_0\).

If \(R\neq \Id\), choose the origin on the screw axis. By \lemref{lem:AxialClosedForms}, \(\langle\vec s_g,\pzcA(\gamma)\rangle=\tfrac12\int_I\langle\vec s_g,\vec\gamma\times\vec\gamma'\rangle\,ds\). Its integrand is \(\tau\)-periodic, since \(\langle\vec s_g,\vec\gamma(\tau(t_0))\times\vec\gamma'(\tau(t_0))\rangle
=\langle\vec s_g,\vec\gamma\times\vec\gamma'\rangle\), where we use that \(R^\top\vec s_g=\vec s_g\) and \(\vec b\parallel\vec s_g\). Hence the axial component is independent of \(t_0\).
\end{proof}

\section{Proof of \lemref{thm:VolumeFundamentalDomainIndependent}}
\label{sec:VolumeTransformation}
\begin{proof}
As in \appref{sec:AreaTransformation}, the integral of a \(\tau\)-periodic density over a fundamental domain is independent of \(t_0\).

If \(g=\id\), then \(\vec b=0\), the completion vanishes, and \(\pzcV(\gamma)=\tfrac13\oint\vec\gamma\times(\vec\gamma\times\vec\gamma')\) has a periodic integrand. Thus the full vector is independent of \(t_0\).

If \(g\neq\id\), choose \(\vec s_g\) as in \lemref{lem:AxialClosedForms} (with the origin on the screw axis when \(R\neq \Id\)). By that lemma \(\langle\vec s_g,\pzcV(\gamma)\rangle=-\tfrac12\int_I|\vec\gamma^\perp|^2\,\langle\vec s_g,\vec\gamma'\rangle\,ds\). Its integrand is \(\tau\)-periodic, since \(|\vec\gamma^\perp(\tau(t_0))|^2=|R\vec\gamma^\perp(t_0)|^2=|\vec\gamma^\perp(t_0)|^2\) and \(\langle\vec s_g,\vec\gamma'(\tau(t_0))\rangle=\langle\vec s_g,R\vec\gamma'(t_0)\rangle=\langle\vec s_g,\vec\gamma'(t_0)\rangle\). Hence the axial component is independent of \(t_0\).
\end{proof}

\section{Proof of \thmref{thm:Isoperimetric}}
\label{app:IsoperimetricProof}

\begin{lemma}
\label{lem:AreaVolumeGradients}
Let \(\gamma\in\cM_g\), let \(I=[t_0,\tau(t_0)]\) be a fundamental domain, and let \(\mathring\gamma\) be a variation compactly supported in the interior of \(I\). Then, for every \(\vec\lambda_1,\vec\lambda_2\in\RR^3\),
\begin{align*}
    d\pzcL(\mathring\gamma)
    &=
    -\int_I
    \langle\vec\gamma'',\mathring{\vec\gamma}\rangle\,ds, \\
    d\langle\pzcV\,\vert\,\vec\lambda_1\rangle(\mathring\gamma)
    &=
    \int_I
    \langle(\vec\gamma\times\vec\lambda_1)\times\vec\gamma',
    \mathring{\vec\gamma}\rangle\,ds, \\
    d\langle\pzcA\,\vert\,\vec\lambda_2\rangle(\mathring\gamma)
    &=
    \int_I
    \langle\vec\gamma'\times\vec\lambda_2,\mathring{\vec\gamma}\rangle\,ds.
\end{align*}
\end{lemma}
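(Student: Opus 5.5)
Since \(\mathring\gamma\) is compactly supported in the interior of \(I\), the endpoint completion terms in \teqref{eq:AreaVectorCompleted} and \teqref{eq:VolumeVectorCompleted} do not vary: they depend only on \(\vec\gamma(t_0)\) and on the fixed \(\vec b\). So we only need to vary the line integrals over \(I\). Moreover, any boundary terms produced by integrating by parts vanish. We work in a general parameter \(t\), write \(d\vec\gamma=\vec\gamma_t\,dt\), and specialize to arc length only at the end. This avoids varying the measure \(ds\) separately for \(\pzcA\) and \(\pzcV\), whose integrands \(\vec\gamma\times d\vec\gamma\) and \(\vec\gamma\times(\vec\gamma\times d\vec\gamma)\) are parametrization-invariant one-forms.

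For length, we differentiate \(\int_I|\vec\gamma_t|\,dt\) to get \(\int_I\langle\vec\gamma_t/|\vec\gamma_t|,\mathring{\vec\gamma}_t\rangle\,dt\). Integrating by parts gives \(-\int_I\langle(\vec\gamma_t/|\vec\gamma_t|)_t,\mathring{\vec\gamma}\rangle\,dt\), which equals \(-\int_I\langle\vec\gamma'',\mathring{\vec\gamma}\rangle\,ds\) in arc length.

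For area, the variation of \(\tfrac12\int\vec\gamma\times\vec\gamma_t\,dt\) is \(\tfrac12\int(\mathring{\vec\gamma}\times\vec\gamma_t+\vec\gamma\times\mathring{\vec\gamma}_t)\,dt\). Integrating the second term by parts, with \(\vec\gamma_t\times\mathring{\vec\gamma}=-\mathring{\vec\gamma}\times\vec\gamma_t\), yields \(\int\mathring{\vec\gamma}\times\vec\gamma_t\,dt\). Pairing with \(\vec\lambda_2\) and using the cyclic property \(\langle\vec\lambda_2,\mathring{\vec\gamma}\times\vec\gamma'\rangle=\langle\vec\gamma'\times\vec\lambda_2,\mathring{\vec\gamma}\rangle\) gives the stated formula.

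For volume, the cleanest route is to pair first with \(\vec\lambda_1\). This turns the integrand into \(\tfrac13\langle\vec\lambda_1\times\vec\gamma,\vec\gamma\times\vec\gamma_t\rangle\), as in the flux computation of \secref{sec:VolumeVector}, which can be written \(\tfrac13\det(\vec\lambda_1\times\vec\gamma,\vec\gamma,\vec\gamma_t)\). Equivalently, up to sign it is \(\tfrac13\langle X(\vec\gamma),\vec\gamma\times\vec\gamma_t\rangle\) with \(X=\vec\lambda_1\times\cdot\) divergence-free and linear. We then vary the three slots. Call the resulting terms \(\mathrm{(a)}\) from varying \(\vec\gamma\) inside \(X\), \(\mathrm{(b)}\) from the middle \(\vec\gamma\), and \(\mathrm{(c)}\) from \(\vec\gamma_t\). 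We integrate \(\mathrm{(c)}\) by parts, producing \(-\tfrac13\int\langle\mathring{\vec\gamma},\tfrac{d}{dt}(X(\vec\gamma)\times\vec\gamma)\rangle\,dt\) (with appropriate sign), and expand the derivative as \(X(\vec\gamma_t)\times\vec\gamma+X(\vec\gamma)\times\vec\gamma_t\).

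The main bookkeeping step is showing that the three contributions combine to exactly \(\int\langle(\vec\gamma\times\vec\lambda_1)\times\vec\gamma_t,\mathring{\vec\gamma}\rangle\,dt\), i.e.\ with total coefficient \(1\) rather than \(\tfrac13\). The terms of the form \(\langle\mathring{\vec\gamma},X(\vec\gamma)\times\vec\gamma_t\rangle\) appear directly. The mixed terms \(\det(\vec\lambda_1\times\mathring{\vec\gamma},\vec\gamma,\vec\gamma_t)\) and \(\det(\vec\lambda_1\times\vec\gamma_t,\vec\gamma,\mathring{\vec\gamma})\) must be converted using the identity \(\det(\vec\lambda\times\vec u,\vec v,\vec w)+\det(\vec u,\vec\lambda\times\vec v,\vec w)+\det(\vec u,\vec v,\vec\lambda\times\vec w)=0\). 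This identity is the infinitesimal \(\SO(3)\)-invariance of \(\det\) (equivalently \(\operatorname{tr}[\vec\lambda]=0\), i.e.\ \(\operatorname{div}X=0\)). Applying it to \((\mathring{\vec\gamma},\vec\gamma,\vec\gamma_t)\) rewrites each mixed term as a multiple of \(\det(\mathring{\vec\gamma},\vec\gamma\times\ldots)\) of the principal type. We expect that summing the three contributions gives \(3\cdot\tfrac13=1\) times \(\langle(\vec\gamma\times\vec\lambda_1)\times\vec\gamma_t,\mathring{\vec\gamma}\rangle\). We will double-check the overall sign against the Hamiltonian vector field \(X_{\langle\vec v,\pzcV\rangle}=\vec v\times\vec\gamma\) and \(dH=\vec\gamma'\times(\vec v\times\vec\gamma)\) listed in \tabref{tab:localized-induction-hierarchy}, which agrees with the claimed \((\vec\gamma\times\vec\lambda_1)\times\vec\gamma'\). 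Finally, we pass to arc length, \(\vec\gamma_t\,dt=\vec\gamma'\,ds\).
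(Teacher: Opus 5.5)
Your proposal is correct. For length and area it is the paper's own computation: the endpoint completions are frozen because $\mathring{\vec\gamma}$ vanishes near $t_0$, and one integration by parts finishes each case. Working in a general parameter $t$ is a harmless refinement, since the area and volume integrands involve only $d\vec\gamma$.

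The volume term is where you genuinely diverge. The paper expands $\vec\gamma\times(\vec\gamma\times\vec\gamma')$ and its variation by bac-cab into scalar-product terms. It then integrates the $\mathring{\vec\gamma}'$ coefficient by parts, and the collected terms collapse to $\langle\vec\gamma,\vec\gamma'\rangle\vec\lambda_1-\langle\vec\lambda_1,\vec\gamma'\rangle\vec\gamma$. You instead keep the triple-product form $\tfrac13\det(\vec\lambda_1\times\vec\gamma,\vec\gamma,\vec\gamma_t)$ and use the infinitesimal rotation invariance of $\det$.

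Your bookkeeping does close:
\begin{itemize}
\item $\mathrm{(b)}$ and the $X(\vec\gamma)\times\vec\gamma_t$ part of $\mathrm{(c)}$ each give exactly $\tfrac13\det(\vec\lambda_1\times\vec\gamma,\mathring{\vec\gamma},\vec\gamma_t)$.
\item The two mixed terms enter as $\tfrac13\bigl[\det(\vec\lambda_1\times\mathring{\vec\gamma},\vec\gamma,\vec\gamma_t)+\det(\mathring{\vec\gamma},\vec\gamma,\vec\lambda_1\times\vec\gamma_t)\bigr]$, with matching coefficients.
\item One application of your identity turns this sum into $-\tfrac13\det(\mathring{\vec\gamma},\vec\lambda_1\times\vec\gamma,\vec\gamma_t)=\tfrac13\det(\vec\lambda_1\times\vec\gamma,\mathring{\vec\gamma},\vec\gamma_t)$, the remaining third.
\end{itemize}
The identity converts only this \emph{sum}; each mixed term on its own is not of principal type, so ``rewrites each mixed term'' should be corrected. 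Finally, $\det(\vec\lambda_1\times\vec\gamma,\mathring{\vec\gamma},\vec\gamma')=\langle(\vec\gamma\times\vec\lambda_1)\times\vec\gamma',\mathring{\vec\gamma}\rangle$. So the sign comes out directly, and you do not need the table, which would be circular as a justification anyway.

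As for what each route buys: the paper's is fully explicit and needs nothing beyond bac-cab. Yours is coordinate-free and explains the factor $3\cdot\tfrac13=1$ structurally. It also works unchanged for the flux of any traceless linear field $\vec x\mapsto A\vec x$, echoing the divergence-free argument behind the flux interpretation of the volume vector.
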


\begin{proof}
Since \(\mathring\gamma\) is compactly supported in the interior of \(I\), the endpoint corrections in \teqref{eq:AreaVectorCompleted} and \teqref{eq:VolumeVectorCompleted} have zero first variation. It therefore suffices to differentiate the integral terms.

The length variation is
\begin{equation*}
    d\pzcL(\mathring\gamma)
    =
    \int_I\langle\vec\gamma',\mathring{\vec\gamma}'\rangle\,ds
    =
    -\int_I\langle\vec\gamma'',\mathring{\vec\gamma}\rangle\,ds,
\end{equation*}
where the boundary term vanishes because \(\mathring\gamma\) is compactly supported.

For the area component,
\begin{align*}
    d\langle\pzcA\,\vert\,\vec\lambda_2\rangle(\mathring\gamma)
    &=
    \tfrac12\int_I
    \langle\vec\lambda_2,
    \mathring{\vec\gamma}\times\vec\gamma'
    +
    \vec\gamma\times\mathring{\vec\gamma}'\rangle\,ds \\
    &=
    \tfrac12\int_I
    \langle\vec\lambda_2,
    \mathring{\vec\gamma}\times\vec\gamma'
    -
    \vec\gamma'\times\mathring{\vec\gamma}\rangle\,ds \\
    &=
    \int_I
    \langle\vec\gamma'\times\vec\lambda_2,
    \mathring{\vec\gamma}\rangle\,ds.
\end{align*}
For the volume component, writing
\(
    \langle\pzcV\,\vert\,\vec\lambda_1\rangle
    =
    \tfrac13\int_I
    \langle\vec\lambda_1,
    \vec\gamma\times(\vec\gamma\times\vec\gamma')\rangle\,ds,
\)
we compute
\begin{align*}
    d\langle\pzcV\,\vert\,\vec\lambda_1\rangle(\mathring\gamma)
    &=
    \tfrac13\int_I
    \langle\vec\lambda_1,
    \mathring{\vec\gamma}\times(\vec\gamma\times\vec\gamma')
    +
    \vec\gamma\times(\mathring{\vec\gamma}\times\vec\gamma')
    +
    \vec\gamma\times(\vec\gamma\times\mathring{\vec\gamma}')\rangle\,ds \\
    &=
    \tfrac13\int_I
    \langle
    \langle\vec\gamma,\vec\gamma'\rangle\vec\lambda_1
    +
    \langle\vec\lambda_1,\vec\gamma\rangle\vec\gamma'
    -
    2\langle\vec\lambda_1,\vec\gamma'\rangle\vec\gamma,
    \mathring{\vec\gamma}\rangle\,ds \\
    &\quad+
    \tfrac13\int_I
    \langle
    \langle\vec\lambda_1,\vec\gamma\rangle\vec\gamma
    -
    |\vec\gamma|^2\vec\lambda_1,
    \mathring{\vec\gamma}'\rangle\,ds.
\end{align*}
Integration by parts of the last term and the bac-cab identity then give
\begin{align*}
    d\langle\pzcV\,\vert\,\vec\lambda_1\rangle(\mathring\gamma)
    &=
    \int_I
    \langle
    \langle\vec\gamma,\vec\gamma'\rangle\vec\lambda_1
    -
    \langle\vec\lambda_1,\vec\gamma'\rangle\vec\gamma,
    \mathring{\vec\gamma}\rangle\,ds \\
    &=
    \int_I
    \langle
    (\vec\gamma\times\vec\lambda_1)\times\vec\gamma',
    \mathring{\vec\gamma}\rangle\,ds,
\end{align*}
which proves the claim.
\end{proof}

\begin{lemma}
\label{lem:QuasiPeriodicFirstVariation}
Let \(\gamma\in\cM_g\), let \(I=[t_0,\tau(t_0)]\), and let \(\vec\lambda_1,\vec\lambda_2\) satisfy the conditions in \thmref{thm:Isoperimetric}. Then, for every \(\mathring\gamma\in T_\gamma\cM_g\),
\begin{align}
&d\bigl(
    \pzcL
    +
    \langle\pzcV\,\vert\,\vec\lambda_1\rangle
    +
    \langle\pzcA\,\vert\,\vec\lambda_2\rangle
\bigr)(\mathring\gamma)
\notag\\
&\qquad=
\int_I
\langle
    -\vec\gamma''
    +
    (\vec\gamma\times\vec\lambda_1)\times\vec\gamma'
    +
    \vec\gamma'\times\vec\lambda_2,
    \mathring{\vec\gamma}
\rangle\,ds.
\label{eq:QuasiPeriodicFirstVariation}
\end{align}
\end{lemma}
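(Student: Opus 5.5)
The plan is to compute the first variation exactly as in \lemref{lem:AreaVolumeGradients}, now for a general quasi-periodic variation \(\mathring{\vec\gamma}(\tau(t))=R\mathring{\vec\gamma}(t)\). The endpoint completions no longer have zero variation, and every integration by parts leaves a boundary term at \(t_0\) and \(\tau(t_0)\). I will show that the sum of all these boundary contributions vanishes whenever \(\vec\lambda_1,\vec\lambda_2\) satisfy the hypotheses of \thmref{thm:Isoperimetric}. The interior integrands are then exactly those of \lemref{lem:AreaVolumeGradients}, and summing them gives \eqref{eq:QuasiPeriodicFirstVariation}.

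For each functional the boundary data are as follows. The length contributes \(\langle\vec\gamma',\mathring{\vec\gamma}\rangle\big|_{t_0}^{\tau(t_0)}\). This vanishes because \(\vec\gamma'\) and \(\mathring{\vec\gamma}\) are both multiplied by the orthogonal matrix \(R\) under \(\tau\).

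For the area, the integration by parts leaves \(\tfrac12\langle\vec\lambda_2,\vec\gamma\times\mathring{\vec\gamma}\rangle\big|_{t_0}^{\tau(t_0)}\). The variation of the completion adds \(-\tfrac12\langle\vec\lambda_2,\vec b\times\mathring{\vec\gamma}(t_0)\rangle\). Substituting \(\vec\gamma(\tau)=R\vec\gamma+\vec b\) and \(\mathring{\vec\gamma}(\tau)=R\mathring{\vec\gamma}\), the total is
\[
\tfrac12\langle\vec\lambda_2,(R\vec\gamma+\vec b)\times R\mathring{\vec\gamma}-\vec\gamma\times\mathring{\vec\gamma}-\vec b\times\mathring{\vec\gamma}\rangle .
\]
If \(R=\Id\), this is zero for arbitrary \(\vec\lambda_2\). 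If \(R\neq\Id\), take \(\vec\lambda_2\parallel\vec s_g\) and the origin on the axis, so that \(\vec b\parallel\vec s_g\). Then \(\langle\vec s_g,R\vec x\times R\vec y\rangle=\langle\vec s_g,\vec x\times\vec y\rangle\), and \(\langle\vec s_g,\vec b\times\cdot\rangle=0\), so the total is again zero.

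The volume term is the main obstacle. Integration by parts produces the boundary term \(\tfrac13\langle\langle\vec\lambda_1,\vec\gamma\rangle\vec\gamma-|\vec\gamma|^2\vec\lambda_1,\mathring{\vec\gamma}\rangle\big|_{t_0}^{\tau(t_0)}\). The completion contributes the variation of \(-\tfrac16\langle\vec\lambda_1,\vec\gamma(t_0)\times(\vec b\times\vec\gamma(t_0))\rangle\). These must cancel for each admissible \(\vec\lambda_1\), in three cases.

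\textbf{Case \(g=\id\).} Both contributions are trivially zero.

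\textbf{Case \(R\neq\Id\).} Here \(\vec\lambda_1\parallel\vec s_g\). I would avoid the raw expansion and instead vary the closed form of \lemref{lem:AxialClosedForms}, namely \(-\tfrac12\int_I|\vec\gamma^\perp|^2\langle\vec s_g,\vec\gamma'\rangle\,ds\). Integrating by parts gives the boundary term \(-\tfrac12|\vec\gamma^\perp|^2\langle\vec s_g,\mathring{\vec\gamma}\rangle\big|_{t_0}^{\tau(t_0)}\). This vanishes because \(|\vec\gamma^\perp|\) and \(\langle\vec s_g,\mathring{\vec\gamma}\rangle\) are both \(\tau\)-periodic. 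Its interior integrand should agree with that of \lemref{lem:AreaVolumeGradients}, since both come from the same functional up to a total derivative that is killed by compactly supported variations.

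\textbf{Case \(R=\Id\), \(\vec b\neq0\).} The hypothesis of \thmref{thm:Isoperimetric} forces \(\vec\lambda_1\parallel\vec s_g\). The same closed-form argument applies with \(R=\Id\): \(\vec\gamma^\perp\) is exactly periodic, and the origin need not lie on any particular axis.

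The step I expect to be hardest is justifying that the interior integrands from the closed form and from the full expression coincide. I would check this directly: their difference is the total derivative \(\tfrac16\tfrac{d}{ds}\bigl(\langle\vec s_g,\vec\gamma\rangle|\vec\gamma^\perp|^2\bigr)\) from the appendix, whose first variation integrates to a pure boundary term. That boundary term is exactly cancelled by the variation of the completion, by the same computation as in \lemref{lem:AxialClosedForms}. Summing the three contributions then yields \eqref{eq:QuasiPeriodicFirstVariation}.
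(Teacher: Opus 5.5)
Your proposal is correct. For the length and area terms it is the paper's argument, but for the volume term with \(g\neq\id\) you take a different route.

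The paper keeps the raw integral and substitutes the monodromy relations into the integration-by-parts boundary term. This gives \(\tfrac13 X\) with \(X=\langle\vec\lambda_1,\vec b\rangle\langle\vec\gamma(t_0),\mathring{\vec\gamma}(t_0)\rangle-\langle\vec b,\vec\gamma(t_0)\rangle\langle\vec\lambda_1,\mathring{\vec\gamma}(t_0)\rangle\). It then uses bac-cab to show that the variation of the completion contributes exactly \(-\tfrac16\cdot 2X\), so the two cancel.

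You instead differentiate the closed form of \lemref{lem:AxialClosedForms}, which has already absorbed the completion. The only boundary term left, \(-\tfrac12|\vec\gamma^\perp|^2\langle\vec s_g,\mathring{\vec\gamma}\rangle\big\vert_{t_0}^{\tau(t_0)}\), vanishes by \(\tau\)-periodicity, the same mechanism used in \appref{sec:VolumeTransformation}. This is legitimate for two reasons. The closed form equals \(\langle\vec s_g,\pzcV\rangle\) identically on \(\cM_g\), with the origin on the axis when \(R\neq\Id\). And \(\vec\gamma+\varepsilon\mathring{\vec\gamma}\) stays in \(\cM_g\).

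The step you flag as hardest is actually one line. The interior integrand of the closed form is \(\langle\vec\gamma^\perp,\vec\gamma'\rangle\vec s_g-\langle\vec s_g,\vec\gamma'\rangle\vec\gamma^\perp=\langle\vec\gamma,\vec\gamma'\rangle\vec s_g-\langle\vec s_g,\vec\gamma'\rangle\vec\gamma=(\vec\gamma\times\vec s_g)\times\vec\gamma'\). Your detour through the variation of the total derivative is also valid, but unnecessary. The two functionals coincide exactly on \(\cM_g\), not merely up to a total derivative. So you could also restrict to compactly supported variations and use \lemref{lem:AreaVolumeGradients} with the fundamental lemma to identify the integrands.

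Your version makes the cancellation structural: it is the same periodicity that gives fundamental-domain independence, and it shows why only axial \(\vec\lambda_1\) is admissible. The paper's version is self-contained and does not need the closed form. It verifies the cancellation by explicit algebra that works uniformly for translational and screw monodromy.
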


\begin{proof}
The interior terms are those computed in \lemref{lem:AreaVolumeGradients}. It remains to show that the boundary terms vanish. Since \(\mathring\gamma\in T_\gamma\cM_g\), we have
\(
    \mathring{\vec\gamma}(\tau(t_0))
    =
    R\mathring{\vec\gamma}(t_0)
\)
and
\(
    \vec\gamma'(\tau(t_0))
    =
    R\vec\gamma'(t_0).
\)
Thus the boundary term in the length variation is
\begin{equation*}
    \langle\vec\gamma',\mathring{\vec\gamma}\rangle
    \big\vert_{t_0}^{\tau(t_0)}
    =
    \langle R\vec\gamma'(t_0),R\mathring{\vec\gamma}(t_0)\rangle
    -
    \langle\vec\gamma'(t_0),\mathring{\vec\gamma}(t_0)\rangle
    =
    0.
\end{equation*}

For the completed area vector, the boundary contribution is
\begin{equation}
	\label{eq:CompAreaVecBdyContribution}
    \tfrac12
    \langle\vec\lambda_2,
    \vec\gamma\times\mathring{\vec\gamma}\rangle
    \big\vert_{t_0}^{\tau(t_0)}
    -
    \tfrac12
    \langle\vec\lambda_2,
    \vec b\times\mathring{\vec\gamma}(t_0)\rangle,
\end{equation}
where the second term is the variation of the endpoint correction in \teqref{eq:AreaVectorCompleted}. If \(R=\Id\), then
\begin{equation*}
    \langle\vec\lambda_2,
    \vec\gamma\times\mathring{\vec\gamma}\rangle
    \big\vert_{t_0}^{\tau(t_0)}
    =
    \langle\vec\lambda_2,
    \vec b\times\mathring{\vec\gamma}(t_0)\rangle,
\end{equation*}
so the two terms cancel. If \(R\neq \Id\), then \(\vec\lambda_2\) and \(\vec b\) are parallel to \(\vec s_g\), and \(R^\top\vec\lambda_2=\vec\lambda_2\). Hence \teqref{eq:CompAreaVecBdyContribution} vanishes, since
\begin{align*}
    \langle\vec\lambda_2,
    \bigl(R\vec\gamma(t_0)+\vec b\bigr)
    \times R\mathring{\vec\gamma}(t_0)\rangle 
    =
    \langle\vec\lambda_2,\vec\gamma(t_0)\times\mathring{\vec\gamma}(t_0)\rangle,
\end{align*}
where we use that \(\langle\vec\lambda_2,\vec b\times \mathring{\vec\gamma}(t_0)\rangle=\langle\vec\lambda_2,\vec b\times R\mathring{\vec\gamma}(t_0)\rangle=0\). 

For the completed volume vector, the boundary contribution is
\begin{align*}
&\tfrac13
\langle
    \langle\vec\lambda_1,\vec\gamma\rangle\vec\gamma
    -
    |\vec\gamma|^2\vec\lambda_1,
    \mathring{\vec\gamma}
\rangle
\big\vert_{t_0}^{\tau(t_0)}
\\
&\qquad-
\tfrac16
\langle\vec\lambda_1,
    \mathring{\vec\gamma}(t_0)
    \times\bigl(\vec b\times\vec\gamma(t_0)\bigr)
    +
    \vec\gamma(t_0)
    \times\bigl(\vec b\times\mathring{\vec\gamma}(t_0)\bigr)
\rangle.
\end{align*}
If \(g=\id\), then \(\vec b=0\), and the endpoint values of both \(\vec\gamma\) and \(\mathring{\vec\gamma}\) agree, so this contribution vanishes.

Suppose that \(g\neq\id\). Then \(\vec\lambda_1\) and \(\vec b\) are parallel to \(\vec s_g\), while \(R^\top\vec\lambda_1=\vec\lambda_1\) and \(R^\top\vec b=\vec b\). Substituting the monodromy relations into the first term and expanding gives
\begin{align*}
\langle
    \langle\vec\lambda_1,\vec\gamma\rangle\vec\gamma
    -
    |\vec\gamma|^2\vec\lambda_1,
    \mathring{\vec\gamma}
\rangle
\big\vert_{t_0}^{\tau(t_0)}
=
\langle\vec\lambda_1,\vec b\rangle
\langle\vec\gamma(t_0),\mathring{\vec\gamma}(t_0)\rangle
-
\langle\vec b,\vec\gamma(t_0)\rangle
\langle\vec\lambda_1,\mathring{\vec\gamma}(t_0)\rangle.
\end{align*}
On the other hand, the bac-cab identity gives
\begin{align*}
&\langle\vec\lambda_1,
    \mathring{\vec\gamma}(t_0)
    \times\bigl(\vec b\times\vec\gamma(t_0)\bigr)
    +
    \vec\gamma(t_0)
    \times\bigl(\vec b\times\mathring{\vec\gamma}(t_0)\bigr)
\rangle
\\
&\qquad=
2\langle\vec\lambda_1,\vec b\rangle
\langle\vec\gamma(t_0),\mathring{\vec\gamma}(t_0)\rangle
-
2\langle\vec b,\vec\gamma(t_0)\rangle
\langle\vec\lambda_1,\mathring{\vec\gamma}(t_0)\rangle.
\end{align*}
After multiplication by \(1/3\) and \(-1/6\), respectively, the two contributions cancel. This proves \teqref{eq:QuasiPeriodicFirstVariation}.
\end{proof}

\begin{lemma}
\label{lem:IsoperimetricEulerLagrange}
Let \(\gamma\in\cM_g\). Then \(\gamma\) is elastic if and only if there exist \(\vec\lambda_1,\vec\lambda_2\) satisfying the conditions in \thmref{thm:Isoperimetric} such that
\begin{equation}
\label{eq:IsoperimetricEulerLagrange}
    -\vec\gamma''
    +
    (\vec\gamma\times\vec\lambda_1)\times\vec\gamma'
    +
    \vec\gamma'\times\vec\lambda_2
    =
    0.
\end{equation}
\end{lemma}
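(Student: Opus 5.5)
The plan is to reduce the statement to the vortex-filament characterization \teqref{eq:SRFtoElasticRelation}. That relation was shown in \secref{sec:EquivalentCharacterizations} to be equivalent to the elastic Euler--Lagrange equation \teqref{eq:ElasticELEqClassic}. Throughout, $\gamma$ is arc-length parametrized, so $|\vec\gamma'|=1$ and $\langle\vec\gamma',\vec\gamma''\rangle=0$.

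\emph{First observation: \teqref{eq:IsoperimetricEulerLagrange} is purely normal.} Each of its three terms is orthogonal to $\vec\gamma'$. On the normal plane, $\vec\gamma'\times\cdot$ is injective. Hence \teqref{eq:IsoperimetricEulerLagrange} is equivalent to the equation obtained by crossing it with $\vec\gamma'$ from the left. By bac-cab this crossed equation reads
\begin{equation*}
  \vec\gamma'\times\vec\gamma'' \;=\; \vec\gamma\times\vec\lambda_1-\vec\lambda_2+f\,\vec\gamma',
  \qquad
  f\coloneqq\langle\vec\gamma',\vec\lambda_2-\vec\gamma\times\vec\lambda_1\rangle .
\end{equation*}
Here I use
\begin{equation*}
  \vec\gamma'\times((\vec\gamma\times\vec\lambda_1)\times\vec\gamma')=\vec\gamma\times\vec\lambda_1-\langle\vec\gamma',\vec\gamma\times\vec\lambda_1\rangle\vec\gamma'
\end{equation*}
and
\begin{equation*}
  \vec\gamma'\times(\vec\gamma'\times\vec\lambda_2)=\langle\vec\gamma',\vec\lambda_2\rangle\vec\gamma'-\vec\lambda_2 .
\end{equation*}

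\emph{Direction ($\Leftarrow$).} Assume \teqref{eq:IsoperimetricEulerLagrange} holds. I show that $f$ is constant, so the displayed identity is exactly \teqref{eq:SRFtoElasticRelation} with $\vec c_1=-\vec\lambda_1$, $\vec c_2=-\vec\lambda_2$ and $\lambda_3=f$. Differentiating,
\begin{equation*}
  f'=\langle\vec\gamma'',\vec\lambda_2-\vec\gamma\times\vec\lambda_1\rangle-\langle\vec\gamma',\vec\gamma'\times\vec\lambda_1\rangle .
\end{equation*}
The second term vanishes. For the first, \teqref{eq:IsoperimetricEulerLagrange} rewrites as $\vec\gamma''=\vec\gamma'\times(\vec\lambda_2-\vec\gamma\times\vec\lambda_1)$, which is orthogonal to $\vec\lambda_2-\vec\gamma\times\vec\lambda_1$. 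Hence $f'=0$. Then $\gamma$ is elastic by the equivalence of \teqref{eq:SRFtoElasticRelation} with \teqref{eq:ElasticELEqClassic}.

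\emph{Direction ($\Rightarrow$).} Assume $\gamma$ is elastic, so \teqref{eq:SRFtoElasticRelation} holds for some constants $\vec c_1,\vec c_2,\lambda_3$. Set $\vec\lambda_1=-\vec c_1$ and $\vec\lambda_2=-\vec c_2$. Taking the normal part of \teqref{eq:SRFtoElasticRelation} gives the normal part of the displayed identity; the tangential parts are automatically consistent, since the left side is normal. By the first observation, the normal part is all that is needed, so \teqref{eq:IsoperimetricEulerLagrange} follows.

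\emph{Admissibility of the multipliers.} It remains to check that $\vec\lambda_1,\vec\lambda_2$ satisfy the monodromy conditions of \thmref{thm:Isoperimetric}; I expect this to be the main obstacle. My plan is to evaluate \teqref{eq:IsoperimetricEulerLagrange} at $\tau(t)$ using $\vec\gamma(\tau t)=R\vec\gamma+\vec b$, $\vec\gamma'(\tau t)=R\vec\gamma'$ and $\vec\gamma''(\tau t)=R\vec\gamma''$. Applying $R^\top$, then subtracting the equation at $t$, yields the condition that
\begin{equation*}
  \bigl(\vec\gamma\times(R^\top\vec\lambda_1-\vec\lambda_1)+R^\top\vec b\times R^\top\vec\lambda_1\bigr)\times\vec\gamma'+\vec\gamma'\times(R^\top\vec\lambda_2-\vec\lambda_2)
\end{equation*}
vanishes for all $t$. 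I would then argue case by case.
\begin{enumerate}[(i)]
  \item If $g=\id$, there are no conditions on the multipliers.
  \item If $R=\Id$ and $\vec b\neq0$, the condition becomes $(\vec b\times\vec\lambda_1)\times\vec\gamma'\equiv0$. This forces $\vec b\times\vec\lambda_1=0$, i.e.\ $\vec\lambda_1\parallel\vec s_g$, unless $\gamma$ is a straight line.
  \item If $R\neq\Id$, with the origin on the screw axis, I would compare the parts that are affine in $\vec\gamma$. This should give $R\vec\lambda_1=\vec\lambda_1$ and $R\vec\lambda_2=\vec\lambda_2$, hence both multipliers are parallel to $\vec s_g$.
\end{enumerate}

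\emph{Degenerate curves.} The delicate point is curves whose tangents and positions do not span enough directions for this coefficient comparison, such as lines, circles and helices coaxial with $\vec s_g$. I would try to treat these separately: the multipliers are not unique there, and I would exhibit an admissible choice explicitly. The natural candidate is to replace each $\vec\lambda_k$ by its projection onto $\vec s_g$, and then check directly that \teqref{eq:IsoperimetricEulerLagrange} still holds.
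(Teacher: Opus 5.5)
You have the $(\Leftarrow)$ direction and the normal-part reduction for $(\Rightarrow)$ right; they match the paper. Your $f$ is the paper's constant $\varrho=\langle\vec\gamma',\vec m\rangle$ with $\vec m=\vec\lambda_2-\vec\gamma\times\vec\lambda_1$.

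\textbf{The gap: admissibility when $R\neq\Id$.} You evaluate the \emph{normal} equation \teqref{eq:IsoperimetricEulerLagrange} at $\tau(t)$. Write $\vec u(t)\coloneqq\vec\gamma\times(R^\top\vec\lambda_1-\vec\lambda_1)+R^\top\vec b\times R^\top\vec\lambda_1-(R^\top\vec\lambda_2-\vec\lambda_2)$. The comparison then only tells you $\vec u\times\vec\gamma'=0$, i.e.\ that some Killing field is tangent to $\gamma$, and circles and helices satisfy this. Your step of ``comparing the parts affine in $\vec\gamma$'' is not valid, because $\vec\gamma$ and $\vec\gamma'$ are not independent functions. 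The degenerate cases are left as a plan. You give no proof that your list of exceptional curves is exhaustive, and none that projecting the multipliers onto $\vec s_g$ preserves \teqref{eq:IsoperimetricEulerLagrange}. So the screw case of the lemma, which is the part beyond the classical equivalence, is not established.

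\textbf{The missing idea: keep the tangential information.} The paper evaluates the full vector identity $\vec\gamma'\times\vec\gamma''=\vec c_1\times\vec\gamma+\vec c_2+\varrho\vec\gamma'$ at $\tau(t)$ and subtracts $R$ applied to the identity at $t$. Since $\varrho$ is constant, the tangential terms cancel exactly. This gives the vector identity $(\vec c_1-R\vec c_1)\times R\vec\gamma+\vec c_1\times\vec b+\vec c_2-R\vec c_2=0$. Differentiating yields $(\vec c_1-R\vec c_1)\times R\vec\gamma'=0$. Hence $\vec c_1=R\vec c_1$ unless $\gamma$ is a straight line, in which case $\vec\lambda_1=\vec\lambda_2=0$ works. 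Since $\vec c_1$ and $\vec b$ are both parallel to $\vec s_g$, it then follows that $\vec c_2=R\vec c_2$.

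In your own notation, the constancy of $f$, which you already proved, gives $f(\tau(t))=f(t)$, and this is exactly $\langle\vec u,\vec\gamma'\rangle=0$. Together with $\vec u\times\vec\gamma'=0$ this forces $\vec u\equiv0$, and the argument above goes through. In particular, circles and helices coaxial with $\vec s_g$ are not exceptional at all: any multipliers satisfying \teqref{eq:IsoperimetricEulerLagrange} on them are automatically axial.
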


\begin{proof}
Suppose first that \teqref{eq:IsoperimetricEulerLagrange} holds. Equivalently,
\(
    \vec\gamma''
    =
    \vec\gamma'\times
    \bigl(\vec\lambda_2-\vec\gamma\times\vec\lambda_1\bigr).
\)
Setting
\(
    \vec m
    =
    \vec\lambda_2-\vec\gamma\times\vec\lambda_1,
\)
we find
\(
    \vec\gamma''=\vec\gamma'\times\vec m
\)
and
\(
    \vec m'=-\vec\gamma'\times\vec\lambda_1.
\)
It follows that
\begin{equation*}
    \tfrac{d}{ds}\langle\vec\gamma',\vec m\rangle
    =
    \langle\vec\gamma'\times\vec m,\vec m\rangle
    -
    \langle\vec\gamma',\vec\gamma'\times\vec\lambda_1\rangle
    =
    0.
\end{equation*}
Thus \(\varrho\coloneqq\langle\vec\gamma',\vec m\rangle\) is constant. Crossing \(\vec\gamma''=\vec\gamma'\times\vec m\) with \(\vec\gamma'\) gives
\begin{align*}
    \vec\gamma'\times\vec\gamma''
    &=
    \vec\gamma'\times(\vec\gamma'\times\vec m)
    =
    -\vec m+\varrho\vec\gamma' \\
    &=
    -\vec\lambda_2
    +
    \vec\gamma\times\vec\lambda_1
    +
    \varrho\vec\gamma' \\
    &=
    (-\vec\lambda_1)\times\vec\gamma
    -
    \vec\lambda_2
    +
    \varrho\vec\gamma'.
\end{align*}
After renaming the constant vectors, this is \teqref{eq:SRFtoElasticRelation}, and therefore \(\gamma\) is elastic.

Conversely, let \(\gamma\) be elastic. Then, 
\begin{equation}
\label{eq:IsoperimetricElasticRelation}
    \vec\gamma'\times\vec\gamma''
    =
    \vec c_1\times\vec\gamma
    +
    \vec c_2
    +
    \varrho\vec\gamma'.
\end{equation}
If \(g=\id\), there are no restrictions on \(\vec c_1\) or \(\vec c_2\).

Suppose that \(R=\Id\) and \(\vec b\neq0\). The left-hand side of \teqref{eq:IsoperimetricElasticRelation}, as well as \(\vec\gamma'\), is periodic. Evaluating the equation at \(\tau(t)\) and subtracting its value at \(t\) gives \(\vec c_1\times\vec b=0\). Thus \(\vec c_1\) is parallel to \(\vec b\), while \(\vec c_2\) is unrestricted.

Finally, suppose that \(R\neq \Id\). Evaluating \teqref{eq:IsoperimetricElasticRelation} at \(\tau(t)\) and comparing it with \(R\) applied to the equation at \(t\) gives
\begin{equation}
\label{eq:ElasticMultiplierCompatibility}
    (\vec c_1-R\vec c_1)\times R\vec\gamma(t)
    +
    \vec c_1\times\vec b
    +
    \vec c_2-R\vec c_2
    =
    0.
\end{equation}
Differentiating in \(s\) yields
\(
    (\vec c_1-R\vec c_1)\times R\vec\gamma'(t)=0.
\)
Unless \(\gamma\) is a straight line, its tangent assumes two nonparallel directions, and hence \(\vec c_1=R\vec c_1\). The fixed subspace of the nontrivial rotation \(R\) is the screw axis, so \(\vec c_1\) is parallel to \(\vec s_g\). Since \(\vec b\) is also parallel to \(\vec s_g\), \teqref{eq:ElasticMultiplierCompatibility} gives \(\vec c_2=R\vec c_2\), and hence \(\vec c_2\) is parallel to \(\vec s_g\). If \(\gamma\) is a straight line, \teqref{eq:IsoperimetricEulerLagrange} holds with \(\vec\lambda_1=\vec\lambda_2=0\).

In every case, we may therefore set
\(
    \vec\lambda_1=-\vec c_1
\)
and
\(
    \vec\lambda_2=-\vec c_2.
\)
These vectors satisfy the conditions in \thmref{thm:Isoperimetric}, and reversing the preceding calculation gives \teqref{eq:IsoperimetricEulerLagrange}.
\end{proof}

\begin{proof}[Proof of \thmref{thm:Isoperimetric}]
Suppose first that condition~(i) holds. Fix a fundamental domain \(I=[t_0,\tau(t_0)]\), and let \(\mathring\gamma\) be compactly supported in its interior. Extending it by
\(
    \mathring{\vec\gamma}(\tau(t))
    =
    R\mathring{\vec\gamma}(t)
\)
gives a smooth variation in \(T_\gamma\cM_g\). Condition~(i) and \lemref{lem:QuasiPeriodicFirstVariation} therefore imply
\begin{equation*}
    \int_I
    \langle
        -\vec\gamma''
        +
        (\vec\gamma\times\vec\lambda_1)\times\vec\gamma'
        +
        \vec\gamma'\times\vec\lambda_2,
        \mathring{\vec\gamma}
    \rangle\,ds
    =
    0.
\end{equation*}
By the fundamental lemma of the calculus of variations, \teqref{eq:IsoperimetricEulerLagrange} holds in the interior of \(I\). Since \(t_0\) was arbitrary, it holds on all of \(\RR\). 
A partition of unity subordinate to a finite cover of the support by interiors of fundamental domains, together with \lemref{lem:AreaVolumeGradients}, then proves condition~(ii).

Conversely, suppose that condition~(ii) holds. Testing against variations compactly supported in the interior of arbitrary fundamental domains and applying \lemref{lem:AreaVolumeGradients} and the fundamental lemma gives \teqref{eq:IsoperimetricEulerLagrange} on all of \(\RR\). Substitution into \teqref{eq:QuasiPeriodicFirstVariation} shows that the augmented functional is stationary with respect to every variation in \(T_\gamma\cM_g\), proving condition~(i).

For each fixed pair \((\vec\lambda_1,\vec\lambda_2)\) satisfying the
stated conditions, the preceding argument shows that conditions~(i)
and~(ii) are each equivalent to
\teqref{eq:IsoperimetricEulerLagrange}. By
\lemref{lem:IsoperimetricEulerLagrange}, \(\gamma\) is elastic if and
only if this equation holds for some such pair. Hence \(\gamma\) is
elastic if and only if either, and therefore both, of conditions~(i)
and~(ii) holds for some admissible pair.
\end{proof}

\section{Proof of \thmref{thm:AreaVolumeTransformation}}
\label{sec:AreaVolumeTransformationProof}
Throughout, \(h=(Q,\vec c)\in Z(g)\) and \(\gamma\in\cM_g\) has monodromy \(g=(R,\vec b)\), evaluated on a
fundamental domain \(I=[t_0,\tau(t_0)]\). When \(g\neq\id\) we write \(\vec b=\ell\vec s_g\) with
\(R\vec s_g=\vec s_g\) (so \(\ell=|\vec b|\) when \(R=\Id\)) and
\(\vec\gamma^\perp=\vec\gamma-\langle\vec s_g,\vec\gamma\rangle\vec s_g\).

By the description of \(Z(g)\) in~\secref{sec:MonodromyCases}, every \(h=(Q,\vec c)\in Z(g)\) satisfies
\begin{equation*}
    Q\vec b=\vec b,\qquad R\vec c=\vec c ,
\end{equation*}
in each of the three cases, so it suffices to treat the two factors of
\(h=(\Id,\vec c)\circ(Q,0)\) in turn.

For the rotational factor, \(Q\circ\gamma\in\cM_g\), and since
\(Q\) commutes with the cross product and fixes \(\vec b\), we have \(\pzcA(Q\circ\gamma)=Q\pzcA(\gamma)\) and
\(\pzcV(Q\circ\gamma)=Q\pzcV(\gamma)\). 

It therefore suffices to verify the transformation laws for compatible translations, \ie, those with \(R\vec c=\vec c\). We do so for the area (\lemref{lem:AreaCovariance}) and volume (\lemref{lem:VolumeCovariance}) separately. The proof of \thmref{thm:AreaVolumeTransformation} then follows from combining the matching cases.

\begin{lemma}
\label{lem:AreaCovariance}
For \(h=(Q,\vec c)\in Z(g)\):
\begin{enumerate}[\upshape(i)]
    \item if \(g=\id\), then \(\pzcA(h\circ\gamma)=Q\pzcA(\gamma)\);
    \item if \(R=\Id\), \(\vec b\neq0\), then \(\pzcA(h\circ\gamma)=Q\pzcA(\gamma)+\vec c\times\vec b\);
    \item if \(R\neq \Id\), then \(\langle\vec s_g,\pzcA(h\circ\gamma)\rangle=\langle\vec s_g,Q\pzcA(\gamma)\rangle\).
\end{enumerate}
\end{lemma}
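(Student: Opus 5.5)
We follow the reduction stated just before the lemma: write \(h=(\Id,\vec c)\circ(Q,0)\). The rotational factor has already been handled, giving \(\pzcA(Q\circ\gamma)=Q\pzcA(\gamma)\). Since \(Q\) fixes \(\vec s_g\), this also gives \(\langle\vec s_g,\pzcA(Q\circ\gamma)\rangle=\langle\vec s_g,Q\pzcA(\gamma)\rangle\). It therefore suffices to prove the three claims for a pure translation \(\tilde\gamma=\gamma+\vec c\) with \(R\vec c=\vec c\), applied to the curve \(Q\circ\gamma\in\cM_g\). Note that \(\tilde\gamma\) has the same monodromy \(g\). Indeed, \(\tilde\gamma(\tau(t))=R\vec\gamma(t)+\vec b+\vec c=R\tilde\gamma(t)+\vec b\), using \(R\vec c=\vec c\). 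Hence the completion in \teqref{eq:AreaVectorCompleted} uses the same \(\vec b\), and \(\tilde\gamma'=\vec\gamma'\).

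The key computation is a direct substitution into \teqref{eq:AreaVectorCompleted}:
\begin{equation*}
    \pzcA(\tilde\gamma)
    =\pzcA(\gamma)
    +\tfrac12\,\vec c\times\int_I\vec\gamma'\,ds
    -\tfrac12\,\vec b\times\vec c .
\end{equation*}
By the quasi-periodicity, \(\int_I\vec\gamma'\,ds=\vec\gamma(\tau(t_0))-\vec\gamma(t_0)=(R-\Id)\vec\gamma(t_0)+\vec b\). So the extra term equals
\begin{equation*}
    \tfrac12\,\vec c\times(R-\Id)\vec\gamma(t_0)+\tfrac12\,\vec c\times\vec b-\tfrac12\,\vec b\times\vec c
    =\tfrac12\,\vec c\times(R-\Id)\vec\gamma(t_0)+\vec c\times\vec b .
\end{equation*}

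We now split into the three cases.
\begin{enumerate}[(i)]
    \item If \(g=\id\), then \(\vec b=0\) and \(R=\Id\), so both terms vanish and \(\pzcA(\tilde\gamma)=\pzcA(\gamma)\).
    \item If \(R=\Id\) and \(\vec b\neq0\), the first term vanishes and only \(\vec c\times\vec b\) remains. Combined with the rotation step, this gives \(\pzcA(h\circ\gamma)=Q\pzcA(\gamma)+\vec c\times\vec b\).
    \item If \(R\neq\Id\), the condition \(R\vec c=\vec c\) forces \(\vec c\parallel\vec s_g\), and also \(\vec b\parallel\vec s_g\). Hence \(\vec c\times\vec b=0\). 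Moreover, \(\langle\vec s_g,\vec c\times w\rangle=0\) for every \(w\), so the remaining term has zero axial component. This yields \(\langle\vec s_g,\pzcA(\tilde\gamma)\rangle=\langle\vec s_g,\pzcA(\gamma)\rangle\).
\end{enumerate}

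The only delicate point is bookkeeping in the composition order. We must confirm that translating \(Q\circ\gamma\) gives the same statement as the formula with \(Q\pzcA(\gamma)\), and that the sign of \(\vec c\times\vec b\) in (ii) comes out correctly from the \(\tfrac12+\tfrac12\) combination. Both follow from the displayed identity applied to \(Q\circ\gamma\) in place of \(\gamma\), since \(Q\circ\gamma\in\cM_g\) because \(Q\vec b=\vec b\) and \(Q\) commutes with \(R\).
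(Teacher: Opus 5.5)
Your proposal is correct and follows the paper's proof essentially line by line. You reduce to a compatible translation \(\gamma\mapsto\gamma+\vec c\) with \(R\vec c=\vec c\), substitute into the completed area vector to get the correction \(\tfrac12\,\vec c\times(R-\Id)\vec\gamma(t_0)+\vec c\times\vec b\), and dispatch the three cases, while your explicit remark that the translation acts on \(Q\circ\gamma\in\cM_g\) simply makes precise the composition order the paper leaves implicit in ``by the reduction.''
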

\begin{proof}
By the reduction it suffices to treat \(\gamma\mapsto\gamma+\vec c\), \(R\vec c=\vec c\). 
As \((\gamma+\vec c)'=\gamma'\) and the monodromy translation is unchanged,
\begin{align*}
    \pzcA(\gamma+\vec c)
    &=\tfrac12\int_I(\vec\gamma+\vec c)\times\vec\gamma'\,ds-\tfrac12\vec b\times(\vec\gamma(t_0)+\vec c)\\
    &=\pzcA(\gamma)+\tfrac12\,\vec c\times\bigl(\vec\gamma(\tau(t_0))-\vec\gamma(t_0)\bigr)-\tfrac12\vec b\times\vec c .
\end{align*}
With \(\vec\gamma(\tau(t_0))-\vec\gamma(t_0)=(R-\Id)\vec\gamma(t_0)+\vec b\) and \(\tfrac12\vec c\times\vec b-\tfrac12\vec b\times\vec c=\vec c\times\vec b\),
\begin{equation*}
    \pzcA(\gamma+\vec c)=\pzcA(\gamma)+\tfrac12\,\vec c\times(R-\Id)\vec\gamma(t_0)+\vec c\times\vec b .
\end{equation*}
If \(g=\id\) both added terms vanish. If \(R=\Id\) the first vanishes, leaving \(\vec c\times\vec b\). If \(R\neq \Id\), pairing with \(\vec s_g\) annihilates both terms: \(\langle\vec s_g,\vec c\times(R-\Id)\vec\gamma(t_0)\rangle=\langle\vec s_g\times\vec c,(R-\Id)\vec\gamma(t_0)\rangle=0\) because \(\vec c\parallel\vec s_g\), and \(\langle\vec s_g,\vec c\times\vec b\rangle=0\) because \(\vec b\parallel\vec s_g\). 
\end{proof}

\begin{lemma}
\label{lem:VolumeCovariance}
For \(h=(Q,\vec c)\in Z(g)\):
\begin{enumerate}[\upshape(i)]
    \item if \(g=\id\), then \(\pzcV(h\circ\gamma)=Q\pzcV(\gamma)+\vec c\times Q\pzcA(\gamma)\);
    \item if \(R=\Id\), \(\vec b\neq0\), then \(\langle\vec s_g,\pzcV(h\circ\gamma)\rangle=\langle\vec s_g,\,Q\pzcV(\gamma)+\vec c\times Q\pzcA(\gamma)\rangle-\tfrac12|\vec b|\,|\vec c^\perp|^2\);
    \item if \(R=\Id\), \(\vec b\neq0\), and \(\vec c^\perp\coloneqq \vec c-\langle\vec s_g,\vec c\rangle\vec s_g\), then \(\langle\vec s_g,\pzcV(h\circ\gamma)\rangle=\langle\vec s_g,Q\pzcV(\gamma)\rangle\).
\end{enumerate}
\end{lemma}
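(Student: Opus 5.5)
The plan is to use the reduction already set up at the start of this appendix: $h=(\Id,\vec c)\circ(Q,0)$ with $Q\vec b=\vec b$ and $R\vec c=\vec c$. For the rotational factor we already have $\pzcV(Q\circ\gamma)=Q\pzcV(\gamma)$ and $\pzcA(Q\circ\gamma)=Q\pzcA(\gamma)$. So it suffices to prove the translation law
\[
\pzcV(\gamma+\vec c)=\pzcV(\gamma)+\vec c\times\pzcA(\gamma)+(\text{correction})
\]
for compatible $\vec c$, and then apply it to $Q\circ\gamma$ in place of $\gamma$. I read item (iii) as the screw case $R\neq\Id$, which the phrasing of the statement appears to intend. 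In that case $\vec c\parallel\vec s_g$, so $\vec c^\perp=0$.

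\emph{Closed case.} Expand $(\vec\gamma+\vec c)\times((\vec\gamma+\vec c)\times\vec\gamma')$ and set $P=\vec\gamma\times(\vec c\times\vec\gamma')$, $P'=\vec\gamma'\times(\vec c\times\vec\gamma)$. The Jacobi identity gives $\vec c\times(\vec\gamma\times\vec\gamma')=P-P'$. Moreover $P+P'=\tfrac{d}{ds}\bigl(\vec\gamma\times(\vec c\times\vec\gamma)\bigr)$. The terms linear in $\vec c$ therefore equal
\[
\tfrac32\,\vec c\times(\vec\gamma\times\vec\gamma')+\tfrac12\tfrac{d}{ds}\bigl(\vec\gamma\times(\vec c\times\vec\gamma)\bigr),
\]
and the quadratic term $\vec c\times(\vec c\times\vec\gamma')$ is also a total derivative. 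After multiplying by $\tfrac13$ and integrating over a closed curve, both total derivatives vanish. What remains is $\vec c\times\tfrac12\oint\vec\gamma\times\vec\gamma'=\vec c\times\pzcA(\gamma)$, which proves (i).

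\emph{Non-closed cases.} I would not track the endpoint completion of the full vector. Instead I would work with the axial closed form of \lemref{lem:AxialClosedForms},
\[
\langle\vec s_g,\pzcV\rangle=-\tfrac12\int_I|\vec\gamma^\perp|^2\langle\vec s_g,\vec\gamma'\rangle\,ds.
\]
Its derivation applies for $R=\Id$ with any origin, and for $R\neq\Id$ with the origin on the axis. Translating by $\vec c$ leaves $\vec\gamma'$ unchanged and replaces $\vec\gamma^\perp$ by $\vec\gamma^\perp+\vec c^\perp$.

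In the screw case $\vec c^\perp=0$, so the axial volume is unchanged. Also $\langle\vec s_g,\vec c\times Q\pzcA\rangle=0$ since $\vec c\parallel\vec s_g$. This gives (iii).

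In the translational case, expanding the square yields
\[
\langle\vec s_g,\pzcV(\gamma+\vec c)\rangle=\langle\vec s_g,\pzcV(\gamma)\rangle-\int_I\langle\vec\gamma^\perp,\vec c^\perp\rangle\langle\vec s_g,\vec\gamma'\rangle\,ds-\tfrac12|\vec c^\perp|^2\ell,
\]
using $\int_I\langle\vec s_g,\vec\gamma'\rangle=\ell=|\vec b|$. The last term is exactly the claimed $-\tfrac12|\vec b||\vec c^\perp|^2$.

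The main obstacle is identifying the middle integral with $\langle\vec s_g,\vec c\times\pzcA(\gamma)\rangle=\langle\vec s_g\times\vec c,\pzcA(\gamma)\rangle$, where $\pzcA$ is the completed vector including the term $-\tfrac12\vec b\times\vec\gamma(t_0)$. Write $\vec w=\vec s_g\times\vec c\perp\vec s_g$. Decompose $\vec\gamma=\vec\gamma^\perp+z\vec s_g$, and expand $\langle\vec w,\vec\gamma\times\vec\gamma'\rangle$ into its mixed parts $z\langle\vec w,\vec s_g\times\vec\gamma^{\perp\prime}\rangle$ and $z'\langle\vec w,\vec\gamma^\perp\times\vec s_g\rangle$; the purely perpendicular part vanishes because $\vec\gamma^\perp\times\vec\gamma^{\perp\prime}\parallel\vec s_g$. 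Integrate one mixed part by parts. The boundary term is $\tfrac12\ell\langle\vec w,\vec s_g\times\vec\gamma^\perp(t_0)\rangle$, using $z(\tau(t_0))=z(t_0)+\ell$ and periodicity of $\vec\gamma^\perp$. It should cancel against the completion term, as in \lemref{lem:AxialClosedForms}. The final step is to check, with $\langle\vec s_g\times\vec w,\cdot\rangle=-\langle\vec c^\perp,\cdot\rangle$, that the result is $-\int_I\langle\vec c^\perp,\vec\gamma^\perp\rangle z'\,ds$. This sign bookkeeping is where I expect errors and would verify carefully. Combining with the rotational factor then gives (ii).
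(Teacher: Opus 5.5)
Your proposal is correct and follows essentially the same route as the paper, including reading item (iii) as the screw case. You reduce to compatible translations, and you treat the closed case by expanding the triple product; you use the Jacobi identity and total derivatives where the paper uses bac-cab and one integration by parts. You handle the non-closed cases through the axial closed form of \lemref{lem:AxialClosedForms}, where $\vec c^\perp=0$ trivializes the screw case. Your identification of the middle integral via $\vec\gamma=\vec\gamma^\perp+z\vec s_g$ and one integration by parts repackages the paper's sum/difference argument (product rule plus Lagrange identity). The signs you were worried about do work out: the boundary term $\tfrac12\ell\langle\vec c^\perp,\vec\gamma^\perp(t_0)\rangle$ cancels the completion contribution $-\tfrac12\langle\vec s_g\times\vec c,\vec b\times\vec\gamma(t_0)\rangle$ exactly, leaving $-\int_I\langle\vec c^\perp,\vec\gamma^\perp\rangle z'\,ds$.
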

\begin{proof}
Again we reduce to the case \(\gamma\mapsto\gamma+\vec c\), \(R\vec c=\vec c\).

\paragraph{Case (i).} Here \(\vec b=0\), the completion is absent, and \(\oint\vec\gamma'=0\). Expanding \((\vec\gamma+\vec c)\times((\vec\gamma+\vec c)\times\vec\gamma')\), the term quadratic in \(\vec c\) integrates to zero, and the linear term gives, after one integration by parts,
\begin{align*}
    3\pzcV(\gamma+\vec c)
    &=3\pzcV(\gamma)
      +\oint(\langle\vec\gamma,\vec\gamma'\rangle\vec c-\langle\vec c,\vec\gamma\rangle\vec\gamma')\,ds
      +2\,\vec c\times\pzcA(\gamma)\\
    &=3\pzcV(\gamma)
      +\oint\tfrac12\langle\vec\gamma,\vec\gamma\rangle'\vec c\,ds-\oint\langle\vec c,\vec\gamma\rangle\vec\gamma'\,ds
      +2\,\vec c\times\pzcA(\gamma)\\
    &=3\pzcV(\gamma)+\tfrac12\oint\vec c\times(\vec\gamma\times\vec\gamma')\,ds+2\,\vec c\times\pzcA(\gamma)\\
    &=3\pzcV(\gamma)+\vec c\times\pzcA(\gamma)+2\,\vec c\times\pzcA(\gamma)\\
    &=3\pzcV(\gamma)+3\,\vec c\times\pzcA(\gamma).
\end{align*}
Replacing \(\gamma\) by \(Q\circ\gamma\) proves (i).

\paragraph{Cases (ii), (iii).} By \lemref{lem:AxialClosedForms}, using \(\langle\vec s_g,(\vec\gamma+\vec c)'\rangle=\langle\vec s_g,\vec\gamma'\rangle\) and \((\vec\gamma+\vec c)^\perp=\vec\gamma^\perp+\vec c^\perp\),
\begin{align*}
    \langle\vec s_g,\pzcV(\gamma+\vec c)\rangle
    &=-\tfrac12\int_I|\vec\gamma^\perp+\vec c^\perp|^2\langle\vec s_g,\vec\gamma'\rangle\,ds\\
    &=\langle\vec s_g,\pzcV(\gamma)\rangle
    \!-\!\!\!\int_I\langle\vec\gamma,\vec c^\perp\rangle\langle\vec s_g,\vec\gamma'\rangle\,ds
    -\tfrac12|\vec c^\perp|^2\!\int_I \langle\vec s_g,\vec\gamma'\rangle\,ds,\\
    &=\langle\vec s_g,\pzcV(\gamma)\rangle
    \!-\!\!\!\int_I\langle\vec\gamma,\vec c^\perp\rangle\langle\vec s_g,\vec\gamma'\rangle\,ds
    -\tfrac12\ell|\vec c^\perp|^2,
\end{align*}
where we used that \(\langle\vec\gamma^\perp,\vec c^\perp\rangle=\langle\vec\gamma,\vec c^\perp\rangle\) since \(\vec c^\perp\perp\vec s_g\), and \(\int_I \langle\vec s_g,\vec\gamma'\rangle\,ds=\langle\vec s_g,\vec b\rangle=\ell\). 

Now let \(R=\Id\), and consider the integral to be evaluated alongside the one with \(\vec s_g\) and
\(\vec c^\perp\) exchanged. The product rule for
\((\langle\vec s_g,\vec\gamma\rangle\langle\vec c^\perp,\vec\gamma\rangle)'\) gives their
sum,
\begin{align*}
    &\quad \ \int_I\langle\vec s_g,\vec\gamma\rangle\langle\vec c^\perp,\vec\gamma'\rangle\,ds
    +\int_I\langle\vec s_g,\vec\gamma'\rangle\langle\vec c^\perp,\vec\gamma\rangle\,ds
    \\ &=\langle\vec s_g,\vec\gamma\rangle\langle\vec c^\perp,\vec\gamma\rangle\big\vert_{t_0}^{\tau(t_0)}
     =\ell\,\langle\vec c^\perp,\vec\gamma(t_0)\rangle,
 \end{align*}
 where we used that \(\vec\gamma(\tau(t_0))=\vec\gamma(t_0)+\vec b\), \(\langle\vec s_g,\vec b\rangle=\ell\), and \(\langle\vec c^\perp,\vec b\rangle=0\).
 On the other hand, the Lagrange identity
\(\langle\vec s_g\times\vec c^\perp,\vec\gamma\times\vec\gamma'\rangle
=\langle\vec s_g,\vec\gamma\rangle\langle\vec c^\perp,\vec\gamma'\rangle
-\langle\vec s_g,\vec\gamma'\rangle\langle\vec c^\perp,\vec\gamma\rangle\) gives their difference. That is,
 \begin{align*}
    &\quad \ \int_I\langle\vec s_g,\vec\gamma\rangle\langle\vec c^\perp,\vec\gamma'\rangle\,ds
    -\int_I\langle\vec s_g,\vec\gamma'\rangle\langle\vec c^\perp,\vec\gamma\rangle\,ds
    \\ &=2\,\langle\vec s_g\times\vec c^\perp,\pzcA(\gamma)\rangle
     +\ell\,\langle\vec c^\perp,\vec\gamma(t_0)\rangle .
\end{align*}
where we used that \(\int_I\vec\gamma\times\vec\gamma'\,ds=2\pzcA(\gamma)+\vec b\times\vec\gamma(t_0)\).
Subtracting the second line from the first cancels the first integral and both endpoint terms, so
\begin{equation*}
    -\int_I\langle\vec\gamma,\vec c^\perp\rangle\langle\vec s_g,\vec\gamma'\rangle\,ds
    =\langle\vec s_g\times\vec c^\perp,\pzcA(\gamma)\rangle
    =\langle\vec s_g,\vec c\times\pzcA(\gamma)\rangle ,
\end{equation*}
where the last step uses that \(\langle\vec s_g,\vec s_g\times\pzcA(\gamma)\rangle=0\).

In case (iii) we have \(\vec c\parallel\vec s_g\), so \(\vec c^\perp=0\) and \(\langle\vec s_g,\vec c\times\pzcA\rangle=0\), leaving \(\langle\vec s_g,\pzcV(h\circ\gamma)\rangle=\langle\vec s_g,Q\pzcV(\gamma)\rangle\).
\end{proof}

\section{Proof of \thmref{thm:MWSymplectic}}
\label{sec:MWLWellDefinedProof}
\begin{proof}[Proof of \thmref{thm:MWSymplectic}]
To show that \(\OmegaMW\) is indeed closed, we make use of the convenient fact that the Marsden--Weinstein form is in fact exact, with the \emph{Marsden--Weinstein--Liouville form} \(\ThetaMW\in\Omega^1(\cM_g)\) as potential. In what follows we show that 
\begin{equation*}
    \OmegaMW= d\ThetaMW \in\Omega^2(\cM_g).
\end{equation*}
Since the exterior derivative of an exact form always vanishes, this implies \(\OmegaMW\) is closed. 

Infinitesimal reparametrizations have the form
\(\dot{\vec\gamma}=f\vec\gamma'\), and for such vectors
\(\det(\dot{\vec\gamma},\cdot,\vec\gamma')\equiv 0\) by alternation of the
determinant. Thus they lie in the kernel. Conversely, if
\(
\det(\dot{\vec\gamma},\mathring{\vec\gamma},\vec\gamma')
=
\langle \mathring{\vec\gamma},\vec\gamma'\times\dot{\vec\gamma}\rangle
\)
vanishes for all \(\mathring{\vec\gamma}\in T_\gamma\cM_g\), then
\(\vec\gamma'\times\dot{\vec\gamma}=0\). Hence \(\dot{\vec\gamma}\) is pointwise
parallel to \(\vec\gamma'\), so \(\dot{\vec\gamma}=f\vec\gamma'\) for some
\(\tau\)-periodic function \(f(\tau(t))=f(t)\). Therefore the kernel consists exactly of the
infinitesimal reparametrizations.
\end{proof}

The Marsden--Weinstein--Liouville form is most conveniently constructed by first choosing a primitive of the Euclidean
volume form, \ie, some \(\beta\in\Omega^2(\RR^3)\) such that \(d\beta=\mathrm{vol}\). To be well-defined on \(\cM_g\), the choice must be compatible with the monodromy of \(\cM_g\), which is why we additionally require that \(g^*\beta=\beta\) (\lemref{lem:MWLWellDefined}). Given such a \(\beta\), we define the Marsden--Weinstein--Liouville form
\(\ThetaMW\in\Omega^1(\cM_g)\) by
\begin{equation*}
\label{eq:ThetaMW}
    \ThetaMW_\gamma(\mathring\gamma)
    =
    \int_\gamma \iota_{\mathring{\vec{\gamma}}}\beta
    =
    \int_I\beta(\mathring{\vec\gamma},d\vec\gamma).
\end{equation*}

Now a direct computation verifies that indeed \(d\ThetaMW=\OmegaMW\). Let
\(\dot\gamma\) and \(\mathring\gamma\) be local commuting variation fields on
\(\cM_g\). At \(\gamma\), their pointwise representatives are the sections
\(\dot{\vec\gamma},\mathring{\vec\gamma}\in\Gamma(\gamma^*T\mathbb{R}^3)\) in the tangent bundle \(T\mathbb{R}^3\) pulled back by \(\gamma\).
With \(\nabla\) denoting the flat connection on \(\mathbb{R}^3\), applied
pointwise to these pullback sections, the bracket term in \(d\ThetaMW\)
vanishes. Then,
\begin{align*}
d\ThetaMW_\gamma(\dot\gamma,\mathring\gamma)
&=
d\big(\ThetaMW(\mathring\gamma)\big)_\gamma(\dot\gamma)
-
d\big(\ThetaMW(\dot\gamma)\big)_\gamma(\mathring\gamma) \\
&=
\int_I
\Big[
(\nabla_{\dot{\vec\gamma}}\beta)(\mathring{\vec\gamma},\vec\gamma')
+
\beta(\nabla_{\dot{\vec\gamma}}\mathring{\vec\gamma},\vec\gamma')
+
\beta(\mathring{\vec\gamma},\dot{\vec\gamma}') \\
&\hspace{2.5em}
-
(\nabla_{\mathring{\vec\gamma}}\beta)(\dot{\vec\gamma},\vec\gamma')
-
\beta(\nabla_{\mathring{\vec\gamma}}\dot{\vec\gamma},\vec\gamma')
-
\beta(\dot{\vec\gamma},\mathring{\vec\gamma}')
\Big]\,dt .
\end{align*}
Since \(\dot\gamma\) and \(\mathring\gamma\) commute,
\(\nabla_{\dot{\vec\gamma}}\mathring{\vec\gamma}
=
\nabla_{\mathring{\vec\gamma}}\dot{\vec\gamma}\), so the middle terms cancel.
Integrating the remaining \(t\)-derivative terms by parts gives
\begin{equation*}
\int_I
\big[
\beta(\mathring{\vec\gamma},\dot{\vec\gamma}')
-
\beta(\dot{\vec\gamma},\mathring{\vec\gamma}')
\big]\,dt
=
\int_I
(\nabla_{\vec\gamma'}\beta)(\dot{\vec\gamma},\mathring{\vec\gamma})\,dt,
\end{equation*}
with no boundary term because the endpoints are related by \(g\) and
\(g^*\beta=\beta\). Hence, using that \(d\beta=\mathrm{vol}=\det\), we find
\begin{align*}
d\ThetaMW_\gamma(\dot\gamma,\mathring\gamma)
&=
\int_I
\Big[
(\nabla_{\dot{\vec\gamma}}\beta)(\mathring{\vec\gamma},\vec\gamma')
-
(\nabla_{\mathring{\vec\gamma}}\beta)(\dot{\vec\gamma},\vec\gamma')
+
(\nabla_{\vec\gamma'}\beta)(\dot{\vec\gamma},\mathring{\vec\gamma})
\Big]\,dt \\
&=
\int_I d\beta(\dot{\vec\gamma},\mathring{\vec\gamma},\vec\gamma')\,dt 
=
\int_I \det(\dot{\vec\gamma},\mathring{\vec\gamma},\vec\gamma')\,dt 
=
\OmegaMW_\gamma(\dot\gamma,\mathring\gamma).
\end{align*}

\begin{lemma}
\label{lem:MWLWellDefined}
    \(\ThetaMW\) is well-defined on \(\cM_g\) if and only if \(g^*\beta=\beta\).
\end{lemma}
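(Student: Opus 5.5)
The key is that \(\ThetaMW_\gamma(\mathring\gamma)=\int_I\beta(\mathring{\vec\gamma},d\vec\gamma)\) is defined on a chosen fundamental domain \(I=[t_0,\tau(t_0)]\). Being well-defined on \(\cM_g\) will mean that this value does not depend on \(t_0\), so that it is intrinsic to the quasi-periodic curve and its compatible variation. I will therefore compute \(\tfrac{d}{dt_0}\ThetaMW_\gamma(\mathring\gamma)\) and show that it vanishes for all \((\gamma,\mathring\gamma)\) exactly when \(g^*\beta=\beta\). Write \(\eta(t)\coloneqq\beta_{\vec\gamma(t)}(\mathring{\vec\gamma}(t),\vec\gamma'(t))\) for the integrand. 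Then
\[
\tfrac{d}{dt_0}\int_{t_0}^{\tau(t_0)}\eta\,dt=\eta(\tau(t_0))-\eta(t_0),
\]
using that \(\tau\) is a translation, so \(\tau'=1\).

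Next I will express \(\eta(\tau(t))\) through the monodromy. From \(\gamma\circ\tau=g\circ\gamma\), and since \(g\) is affine with linear part \(R\), we get \(\vec\gamma'(\tau(t))=R\vec\gamma'(t)=dg(\vec\gamma'(t))\) and \(\mathring{\vec\gamma}(\tau(t))=R\mathring{\vec\gamma}(t)\). Hence
\[
\eta(\tau(t))=\beta_{g(\vec\gamma(t))}\bigl(dg\,\mathring{\vec\gamma}(t),dg\,\vec\gamma'(t)\bigr)=(g^*\beta)_{\vec\gamma(t)}\bigl(\mathring{\vec\gamma}(t),\vec\gamma'(t)\bigr).
\]
The derivative in \(t_0\) is therefore \((g^*\beta-\beta)_{\vec\gamma(t_0)}(\mathring{\vec\gamma}(t_0),\vec\gamma'(t_0))\).

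\emph{Sufficiency.} If \(g^*\beta=\beta\), this expression vanishes identically, so \(\ThetaMW_\gamma(\mathring\gamma)\) is independent of \(t_0\). Linearity in \(\mathring\gamma\) and smooth dependence on \(\gamma\) are clear from the formula. Thus \(\ThetaMW\) is a well-defined \(1\)-form on \(\cM_g\).

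\emph{Necessity.} Suppose \(\ThetaMW\) is independent of \(t_0\) for all \(\gamma\in\cM_g\) and all \(\mathring\gamma\in T_\gamma\cM_g\). Then the \(2\)-form \(\alpha\coloneqq g^*\beta-\beta\) satisfies \(\alpha_{\vec x}(\vec u,\vec w)=0\) for every triple \((\vec x,\vec u,\vec w)\) that arises as \((\vec\gamma(t_0),\mathring{\vec\gamma}(t_0),\vec\gamma'(t_0))\). I will show that every triple \(\vec x\in\RR^3\), \(\vec u\in\RR^3\), \(\vec w\neq0\) arises this way, which forces \(\alpha=0\).

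\emph{Realizing triples.} Choose a short segment through \(\vec x\) with tangent \(\vec w\) on \([t_0,t_0+\epsilon]\). Extend it smoothly to an immersion on \([t_0,\tau(t_0)]\) whose end is \(g\) applied to the beginning, matching all derivatives, and then propagate by the monodromy; this is always possible for immersions. Similarly, prescribe \(\mathring{\vec\gamma}(t_0)=\vec u\) and extend by a cutoff that is compatible with \(\mathring{\vec\gamma}(\tau(t))=R\mathring{\vec\gamma}(t)\). Since \(\alpha_{\vec x}\) is continuous and vanishes on all pairs with \(\vec w\neq0\), it vanishes on all pairs. I expect this construction of quasi-periodic immersions through arbitrary first-order data to be the only mildly delicate step. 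It is handled by local gluing with bump functions near \(t_0\) and near \(\tau(t_0)\), while keeping the curve immersed in between.
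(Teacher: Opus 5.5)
Your proof is correct and follows the paper's argument. Like the paper, you differentiate the fundamental-domain integral in \(t_0\) and use \(\vec\gamma'(\tau(t))=R\vec\gamma'(t)\) and \(\mathring{\vec\gamma}(\tau(t))=R\mathring{\vec\gamma}(t)\) to obtain \((g^*\beta-\beta)_{\vec\gamma(t_0)}(\mathring{\vec\gamma}(t_0),\vec\gamma'(t_0))\). Your realization argument for necessity, in which an arbitrary point, nonzero tangent, and variation value arise from some quasi-periodic immersion with a compatible bump-function variation, fills in a step the paper only asserts; continuity is not even needed there, since \(\alpha_{\vec x}(\vec u,0)=0\) by bilinearity.
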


\begin{proof}
    Let \(\gamma\in\cM_g\) and \(\mathring\gamma\in T_\gamma\cM_g\). Well-definedness requires that \(\ThetaMW_\gamma(\mathring\gamma)\) not depend on which fundamental domain one integrates over. For \(t\in\RR\), write the potential evaluated on \([t,\tau(t)]\) as
    \begin{equation*}
        \Theta^{[t]}
        \coloneqq
        \int_{t}^{\tau(t)}
        \beta_{\vec\gamma(s)}\bigl(\mathring{\vec\gamma}(s),\vec\gamma'(s)\bigr)\,ds ,
    \end{equation*}
    so that independence amounts to \(\Theta^{[t_0]}=\Theta^{[t_1]}\) for all \(t_0,t_1\in\RR\). Since \(\tau\) is a translation, \(\tau'\equiv1\), and the Leibniz rule gives
    \begin{equation*}
        \tfrac{d}{dt}\Theta^{[t]}
        =
        \beta_{\vec\gamma(\tau(t))}\bigl(\mathring{\vec\gamma}(\tau(t)),\vec\gamma'(\tau(t))\bigr)
        -
        \beta_{\vec\gamma(t)}\bigl(\mathring{\vec\gamma}(t),\vec\gamma'(t)\bigr).
    \end{equation*}
    The quasi-periodicity \(\vec\gamma(\tau(t))=g(\vec\gamma(t))\) gives, upon differentiation in \(t\), \(\vec\gamma'(\tau(t))=dg\,\vec\gamma'(t)\) and \(\mathring{\vec\gamma}(\tau(t))=dg\,\mathring{\vec\gamma}(t)\). Hence the first term equals
    \begin{equation*}
        \beta_{g(\vec\gamma(t))}\bigl(dg\,\mathring{\vec\gamma}(t),\,dg\,\vec\gamma'(t)\bigr)
        =
        (g^*\beta)_{\vec\gamma(t)}\bigl(\mathring{\vec\gamma}(t),\vec\gamma'(t)\bigr),
    \end{equation*}
    so that
    \begin{equation*}
        \tfrac{d}{dt}\Theta^{[t]}
        =
        (g^*\beta-\beta)_{\vec\gamma(t)}\bigl(\mathring{\vec\gamma}(t),\vec\gamma'(t)\bigr).
    \end{equation*}
    This vanishes for all \(\gamma\in\cM_g\), all \(\mathring\gamma\in T_\gamma\cM_g\), and all \(t\) if and only if \(g^*\beta=\beta\). Therefore \(\Theta^{[t_0]}=\Theta^{[t_1]}\) for all \(t_0,t_1\). That is, \(\ThetaMW\) is independent of the chosen fundamental domain if and only if \(g^*\beta=\beta\).

The Marsden--Weinstein--Liouville form \(\ThetaMW\) may depend on the chosen admissible
primitive \(\beta\). Its exterior derivative does not: for every admissible
\(\beta\), the calculation above gives
\begin{equation*}
d\ThetaMW=\OmegaMW.
\end{equation*}
Consequently, the Marsden--Weinstein--Liouville forms associated with any two admissible
primitives differ by a closed one-form on \(\cM_g\), and the resulting
Marsden--Weinstein form is independent of this choice.
\end{proof}

\begin{example}
An admissible primitive can be chosen explicitly. For \(g=\id\), let
\begin{equation*}
\beta_0=\tfrac13\star\vec p^{\flat},
\qquad
\vec p(\vec x)=\vec x.
\end{equation*}
For translational or screw monodromy with axis
\(\RR\vec s_g\), define
\begin{equation*}
    \vec p_{\vec s_g}(\vec x)
    \coloneqq
    \vec x-\langle\vec x,\vec s_g\rangle\vec s_g
\end{equation*}
and set
\begin{equation*}
    \beta_{\vec s_g}
    =
    \tfrac12\star\vec p_{\vec s_g}^{\flat}.
\end{equation*}
Then \(d\beta_{\vec s_g}=\mathrm{vol}\) and
\(g^*\beta_{\vec s_g}=\beta_{\vec s_g}\).
In either case, \(d\beta=\mathrm{vol}\) and \(g^*\beta=\beta\).
\end{example}

\section{Proof of \lemref{thm:CentralizerSymplectomorphism}}
\label{sec:CentralizerSymplectomorphismProof}
\begin{proof}
    Let \(h=(Q,\vec c)\in Z(g)\). Since \(h\) commutes with \(g\),
    \begin{equation*}
        h(\gamma)(\tau(t))
        =
        h(g(\gamma(t)))
        =
        g(h(\gamma(t))),
    \end{equation*}
    so \(h\) maps \(\cM_g\) to itself. Its differential is
    \(dh(\mathring\gamma)=Q\mathring\gamma\). Hence, using
    \teqref{eq:MWForm},
    \begin{align*}
        (h^*\OmegaMW)_\gamma(\dot\gamma,\mathring\gamma)
        &=
        \OmegaMW_{h(\gamma)}
        (Q\dot\gamma,Q\mathring\gamma) \\
        &=
        \int_I
        \det\bigl(Q\dot{\vec\gamma},Q\mathring{\vec\gamma},Qd\vec\gamma\bigr)
        =
        \OmegaMW_\gamma(\dot\gamma,\mathring\gamma),
    \end{align*}
    since \(Q\in\SO(3)\). Thus \(Z(g)\) preserves \(\OmegaMW\). Moreover,
    rigid motions commute with reparametrizations, so the action descends to
    the quotient. Since \(\OmegaMW\) descends there to a symplectic form by
    \thmref{thm:MWSymplectic}, the induced \(Z(g)\)-action is symplectic.
\end{proof}

\section{Proof of \thmref{thm:MomentumMap}}
\label{sec:MomentumMapProof}
\begin{proof}
Let \((\vec\omega,\vec v)\in\mathfrak z(g)\). Its infinitesimal vector field is
\begin{equation*}
    X_{(\vec \omega,\vec v)}(\gamma)
    =
    \vec \omega\times\vec\gamma+\vec v .
\end{equation*}
By the definition of the Marsden--Weinstein form,
\begin{align*}
    \iota_{X_{(\vec \omega,\vec v)}}\OmegaMW_\gamma(\mathring\gamma)
    &=
    \int_I
    \det
    (
        \vec \omega\times\vec\gamma,
        \mathring{\vec\gamma},
        \vec\gamma'
    )
    ds
    +
    \int_I
    \det
    (
        \vec v,
        \mathring{\vec\gamma},
        \vec\gamma'
    )
    ds .
\end{align*}
Differentiating \(\pzcA\) or \(\pzcV\) against a general variation \(\mathring\gamma\in T_\gamma\cM_g\) produces boundary terms at the seam at \(t_0\) and \(\tau(t_0)\). These are canceled by the first variations of the endpoint completions in \teqref{eq:AreaVectorCompleted} and \teqref{eq:VolumeVectorCompleted}, as shown in the proof of \lemref{lem:QuasiPeriodicFirstVariation} (whose compatibility conditions on \((\vec\lambda_1,\vec\lambda_2)\) are precisely those defining \((\vec\omega,\vec v)\in\fz(g)\)). Hence, for every \(\mathring\gamma\in T_\gamma\cM_g\),

\begin{align*}
    \int_I
    \det
    (
       \vec  v,
        \mathring{\vec\gamma},
        \vec\gamma'
    )
    ds
    &=
    \int_I
    \langle
        \vec\gamma'\times \vec v,
        \mathring{\vec\gamma}
    \rangle
    ds
    =
    d\langle \vec v,\pzcA\rangle(\mathring\gamma), \\
    \int_I
    \det
    (
        \vec \omega\times\vec\gamma,
        \mathring{\vec\gamma},
        \vec\gamma'
    )
    ds
    &=
    \int_I
    \langle
        (\vec\gamma\times\vec \omega)\times\vec\gamma',
        \mathring{\vec\gamma}
    \rangle
    ds
    =
    d\langle \vec \omega,\pzcV\rangle(\mathring\gamma).
\end{align*}
Hence
\begin{equation*}
    \iota_{X_{(\vec \omega,\vec v)}}\OmegaMW_\gamma(\mathring\gamma)
    =
    d
    \langle
        ( \pzcV(\gamma),
            \pzcA(\gamma)
        )
        \vert
        (\vec \omega,\vec v)
    \rangle
    (\mathring\gamma), 
\end{equation*}
and restriction from \(\mathfrak{se}(3)\) to \(\mathfrak z(g)\) yields the claim.
\end{proof}

\section{Proof of \thmref{thm:DiscreteMomentumMap}}
\label{sec:DiscreteMomentumMapProof}
\begin{proof}
Let \(\cI\) be the piecewise-linear interpolation map. By construction,
\begin{equation*}
    \OmegaMWD=\cI^*\OmegaMW,
    \qquad
    \sfA=\cI^*\pzcA,
    \qquad
    \sfV=\cI^*\pzcV .
\end{equation*}
Let \(\xi=(\vec \omega,\vec v)\in\fz(g)\). The smooth infinitesimal rigid-motion field is 
\(
    X_\xi(\gamma)=\vec \omega\times\vec\gamma+\vec v .
\) 
Its discrete counterpart \(X_\xi^{\mathrm{disc}}(\gamma)\) is obtained by applying the same formula at the
vertices. Since this formula is affine in \(\vec\gamma\), interpolation of
the vertexwise field agrees with the smooth field along the interpolated
curve:
\begin{equation*}
    d\cI_\gamma(X_\xi^{\mathrm{disc}}(\gamma))=X_\xi(\cI(\gamma)).
\end{equation*}
Consequently, using~\thmref{thm:MomentumMap},
\begin{equation*}
    \iota_{X_\xi^{\mathrm{disc}}}\OmegaMWD
    =
    \cI^*
    \left(
        \iota_{X_\xi}\OmegaMW
    \right) 
    =
    \cI^*
    d
    \left\langle
        (\pzcV,\pzcA)\,\vert\,\xi
    \right\rangle 
    =
    d
    \left\langle
        (\sfV,\sfA)\,\vert\,\xi
    \right\rangle,
\end{equation*}
which yields the claim.
\end{proof}

\end{document}